\newcommand{\ssep}{\textsuperscript{,}}
\newcommand\org@hypertarget{}
\let\org@hypertarget\hypertarget
\renewcommand\hypertarget[2]{%
  \Hy@raisedlink{\org@hypertarget{#1}{}}#2%
} 
\newtheorem{theorem}{Theorem}[section]
\newtheorem{lemma}[theorem]{Lemma}
\newtheorem{corollary}[theorem]{Corollary}
\newtheorem{proposition}[theorem]{Proposition}
\theoremstyle{definition}
\newtheorem{definition}[theorem]{Definition}
\newtheorem{remark}[theorem]{Remark}
\newtheorem{example}[theorem]{Example}
\newcommand{\xysquare}[8]{
\[\xymatrix{
#1 \ar@{#5}[r] \ar@{#6}[d] & #2 \ar@{#7}[d]\\
#3 \ar@{#8}[r] & #4
}\]
}
\newcommand{\bb}{\mathbb}
\newcommand{\comment}[1]{}
\renewcommand{\phi}{\varphi}
\newcommand{\roi}{\mathcal{O}}
\newlist{caselist}{enumerate}{1}
\setlist[caselist]{%
  label=\textnormal{Case (\roman*)}, 
  align=left,                        
  leftmargin=*,                      
  itemsep=0pt                        
}
\renewcommand{\cal}{\mathcal}
\renewcommand{\hat}{\widehat}
\newcommand{\indlim}{\varinjlim}
\renewcommand{\tilde}{\widetilde}
\renewcommand{\Im}{\operatorname{Im}}
\DeclareMathOperator{\Char}{char}
\DeclareMathOperator{\Frac}{Frac}
\DeclareMathOperator{\Jac}{Jac}
\DeclareMathOperator{\Pic}{Pic}
\DeclareMathOperator{\Spec}{Spec}
\DeclareMathOperator{\Proj}{Proj}
\DeclareMathOperator{\Tor}{Tor}
\DeclareMathOperator{\essdim}{essdim}
\newcommand{\CH}{C\!H}
\newcommand{\rcoeq}[3]{\xymatrix{ #1\ar@/^3mm/[r]^f \ar@/_3mm/[r]_g & #2 \ar[l]_e\ar[r] & #3}}
\DeclareSymbolFontAlphabet{\mathbbm}{bbold}
\begin{document}
\itemsep0pt

\title{On the diagonal of 
low bidegree hypersurfaces}

\author{Elia Fiammengo and Morten Lüders}

\date{}

\maketitle

\begin{abstract}
We study the existence of a decomposition of the diagonal for bidegree hypersurfaces in a product of projective spaces. Using a cycle theoretic degeneration technique due to Lange, Pavic and Schreieder, we develop an inductive procedure that allows one to raise the degree and dimension starting from the quadric surface bundle of Hassett, Pirutka and Tschinkel. Furthermore, we are able to raise the dimension without raising the degree in a special case, showing that a very general $(3,2)$ complete intersection in $\bb P^4\times \bb P^3$ does not admit a decomposition of the diagonal. As a corollary of these theorems, we show that in a certain range, bidegree hypersurfaces which were previously only known to be stably irrational over fields of characteristic zero by results of Moe, Nicaise and Ottem, are not retract rational over fields of characteristic different from two.
\end{abstract}

\tableofcontents

\section{Introduction}
In this article we study the existence of a decomposition of the diagonal for bidegree hypersurfaces in a product of projective spaces. Rational, stably rational and even retract rational varieties admit a decomposition of the diagonal; equivalently, they have torsion order equal to one.
Since such a decomposition behaves well under specialisation, its existence has, following its introduction by Voisin \cite{VoisinInventiones}, been an important tool in disproving stable and retract rationality. Prominently, this tool has been used by Hassett--Pirutka--Tschinkel to show that the bidegree $(2,2)$ hypersurface  
 $$Q:=\{y_1y_2x_0^2+y_0y_2x_1^2+y_0y_1x_2^2+(y_0^2+y_1^2+y_2^2-2(y_0y_1+y_0y_2+y_1y_2))x_3^2=0\}$$
 $$\subset \Proj k[y_0,y_1,y_2]\times \Proj k[x_0,x_1,x_2,x_3]$$
 \cite[Example 8]{HassetPirutkaTschinkel2018} is not retract rational. Their argument exploits the quadric surface bundle structure on $Q$ arising from projection to the first factor and implies that $\Tor(Q)=2$ (see also Remark \ref{rem:HPT_unirational}).  Indeed, a bidegree $(d,f)$ hypersurface $X\subset \bb P^{n-r}\times \bb P^{r+1}$ can also be viewed as a family of degree $f$ hypersurfaces in $\bb P^{r+1}$ parametrised by $\bb P^{n-r}$. If $f\leq r+1$, then the $k$-variety $X$ is rationally connected \cite[IV.6.5]{Kollar1996}, and hence admits a decomposition of the diagonal with rational coefficients and its torsion order $\Tor (X)$ is finite. It is an interesting problem to investigate the number $\Tor(X)$, and substantial progress has been made in computing this invariant for many classes of rationally connected varieties (see, e.g.,\cite{ChatzistamatiouLevine},\cite{LangeSchreieder2024},\cite{LangeZhang}).
 
 As pointed out by Koll\'ar in \cite{Kollar2000}, bidegree hypersurfaces are particularly interesting when the fibers 
 are rational $r$-folds. For instance, $f=2$ yields quadric bundles (and conic bundles when $r=1$) and for $r=2$, 
$f\in \{2,3\}$ one obtains del Pezzo fibrations.
In these situations it has been shown that for a very general hypersurface in $\bb P^{n-1}_{\bb C}\times \bb P^2_{\bb C}$ of bidegree $(d,2)$ one has $\Tor (X)>1$ provided $d\geq n-1\geq 2$ \cite[Cor. 1.2]{AhmadinezhadOkada2018}; and for a very general hypersurface $X\subset \bb P_{\bb C}^{n-1}\times \bb P_{\bb C}^3$ of bidegree $(d,3)$ one has $\Tor (X)>1$ provided $d\geq n\geq 3$ \cite[Thm. 1.2(3)]{OkadaKrylov2020}.

The conic bundle case has been upgraded by Nicaise--Ottem \cite{NicaiseOttem2022} to show stable irrationality for $d\geq n-2$. Furthermore, they show that if $n\geq 5$, then very general hypersurfaces in $\bb P_{\bb C}^{n-2}\times \bb P_{\bb C}^3$ of bidegree $(d,2)$ are not stably rational if $d\geq n-3$. Their main tool is a theorem which allows one to raise the degree and dimension simultaneously in one component of the bidegree hypersurface $\{Q=0\}$. More precisely, they show that if $k$ is a field of characteristic zero and if a very general bidegree $(d,f)$ hypersurface in  $\bb P^{\ell}_k\times \bb P^m_k $ is stably irrational, then, for all $\ell'\geq \ell$ and $m'\geq m$ with
    $d'\geq d+\ell'-\ell\,\,\,\text{and}\,\,\,f'\geq f+m'-m$
a very general bidegree $(d',f')$ hypersurface in $\bb P^{\ell'}_k\times \bb P ^{m'}_k$ is stably irrational (see \cite[Theorem 4.5]{NicaiseOttem2022}).

Most of the results of \cite{NicaiseOttem2022} have since been strengthened to obstruct a decomposition of the diagonal; see e.g. \cite{FiammengoLuedersCubics}, \cite{LangeSchreieder2024}, \cite{LangeSkauli2023}, \cite{LangeZhang}, \cite{PavicSchreieder2023}, \cite{Skauli2023}. Here we first prove an analogue of Theorem \cite[Theorem 4.5]{NicaiseOttem2022} for the decomposition of the diagonal, valid in arbitrary characteristic $p\neq2$:
\begin{restatable}{theorem}{theoremraisedegdim}\label{theorem_raise_deg_dim}
    Let $k$ be a field of characteristic different from two and $\Lambda$ a ring of positive characteristic such that the exponential characterristic of $k$ is invertible in $\Lambda$. Let $n\geq r+1\geq 1$ and assume there exists an integral bidegree $(d,f)$ hypersurface 
    $$X\subset \bb P^{n-r}_k\times \bb P^{r+1}_k=\Proj k[y_0,...,y_{n-r}]\times \Proj k[x_0,...,x_{r+1}]$$
    such that $\Tor^{\Lambda}(X,\{y_{0}x_{0}y_{1}x_{1}=0\})\neq 1$.
     Then for any integers $n'\geq n$, $f'\geq f$ and $d'\geq d+(n'-n)$ a very general bidegree $(d',f')$ hypersurface $X'$ in $\bb P^{n'-r}_k\times \bb P^{r+1}_k$ we have $\Tor^{\Lambda}(X',\{y_{0}x_{0}y_{1}x_{1}=0\})\neq 1$. In particular, $X'$ does not admit a decomposition of the diagonal.
\end{restatable}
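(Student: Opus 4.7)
The plan is to prove the theorem by induction, reducing the passage from $(n, d, f)$ to $(n', d', f')$ to a chain of elementary moves, each realised by a one-parameter degeneration to which the specialisation theorem for the relative torsion order of Lange-Pavic-Schreieder type can be applied. Concretely, any admissible triple $(n', d', f')$ can be reached from $(n, d, f)$ by first applying the move $(n, d, f) \to (n+1, d+1, f)$ a total of $n' - n$ times, then $(n, d, f) \to (n, d+1, f)$ a total of $d' - d - (n' - n)$ times (non-negative by admissibility), and finally $(n, d, f) \to (n, d, f+1)$ a total of $f' - f$ times. Hence it suffices to treat each of these three elementary moves separately.

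Write $F = F(y, x)$ for the defining equation of $X$. For the move $(n, d, f) \to (n, d, f+1)$, consider
\[
    \mathcal X := \{L \cdot F + t \cdot H = 0\} \subset \bb P^{n-r} \times \bb P^{r+1} \times \bb A^1_t,
\]
where $L$ is a general linear form in the $x$-coordinates and $H$ is a general bidegree $(d, f+1)$ form; the special fibre is $\{L = 0\} \cup X$, whose first component is rational. For $(n, d, f) \to (n, d+1, f)$, use the same construction with $L$ a general linear form in the $y$-coordinates. For $(n, d, f) \to (n+1, d+1, f)$, work inside $\bb P^{n+1-r} \times \bb P^{r+1}$ with a fresh coordinate $y_{n+1-r}$ and take
\[
    \mathcal X := \{y_{n+1-r} \cdot \tilde F + t \cdot H = 0\},
\]
where $\tilde F$ denotes $F$ viewed as a polynomial in the enlarged coordinate ring; the special fibre is $\{y_{n+1-r} = 0\} \cup \{\tilde F = 0\}$, the second component being the cone over $X$, birational to $X \times \bb P^1$.

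In all three cases, the total space $\mathcal X$ has nodal singularities along the codimension-two locus where the two components of the special fibre meet, and blowing this up produces a resolution whose special fibre is a strict-normal-crossing union of the two strict transforms together with a $\bb P^1$-bundle exceptional divisor. Apply the specialisation theorem for the relative torsion order to this resolved family: the first component is rational and contributes trivially with respect to $D = \{y_0 x_0 y_1 x_1 = 0\}$; in the two degree-raising moves, the second component is $X$ itself, contributing $\Tor^{\Lambda}(X, D) \neq 1$ by hypothesis; in the dimension-raising move, the cone (after resolving its vertex singularity) becomes a $\bb P^1$-bundle over $X$, still contributing $\Tor^{\Lambda}(X, D)$. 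Combined with the openness of the locus of hypersurfaces admitting a decomposition of the diagonal, this yields $\Tor^{\Lambda}(X', D) \neq 1$ for a very general bidegree $(d', f')$ hypersurface $X'$.

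The principal difficulty will be verifying the precise hypotheses of the specialisation theorem when the resolved special fibre carries multiple components with potentially non-trivial relative torsion order — this is particularly delicate in the dimension-raising move, where both the resolved cone and the exceptional divisor are $\bb P^1$-bundles over $X$, requiring a refined version of the specialisation argument that accommodates several ``bad'' components. Secondary issues include ensuring that the divisor $D$ restricts compatibly across the resolution and handling the boundary cases $r = 0$ or $n = r + 1$, where coordinate components of $D$ might interfere with the chosen linear form; using sufficiently generic linear forms rather than coordinate functions circumvents the latter.
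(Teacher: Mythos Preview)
Your proposal rests on a specialisation principle that does not exist in the form you invoke. You write that the rational component ``contributes trivially'' and the other component ``contributes $\Tor^\Lambda(X,D)$,'' as though the generic fibre's relative torsion order were governed by the relative torsion orders of the \emph{components} of the special fibre. Theorem~\ref{Theorem_LS_4.3} says something different: the cokernel of $\Psi:\bigoplus_i \CH_1(Y_i^\circ\times L,\Lambda)\to\bigoplus_{i<j}\CH_0(Y_{ij}^\circ\times L,\Lambda)$ is $\Tor^\Lambda(\bar X,W_{\bar X})$-torsion, so the obstruction lives in the \emph{intersection strata}, and to extract it one must control $\CH_1$ of the components. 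In your degree-raising move with a general $L$, the stratum is the hyperplane section $X\cap\{L=0\}$, about which the inductive hypothesis says nothing. In your dimension-raising move the stratum is $Z=X$, which looks promising --- but $Y_1^\circ$ (the cone minus $W$; the vertex already lies in $\{y_0=0\}\subset W$) is an $\bb A^1$-bundle over $X^\circ$, so $\CH_1(Y_1^\circ\times L,\Lambda)\cong\CH_0(X^\circ\times L,\Lambda)$ and the Gysin restriction to the zero section $Z^\circ=X^\circ$ is the identity. Hence $\Psi$ is surjective, the cokernel vanishes, and no obstruction survives. Enlarging $W$ to swallow the hyperplane component does not help either: then $Y^\circ$ becomes a $\bb G_m$-bundle over $X^\circ$, whose $\CH_0$ vanishes over every field, so $\Tor^\Lambda(Y,W')=1$ trivially and the naive specialisation is vacuous.

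The paper's fixes are as follows. For raising only the degree (Proposition~\ref{proposition_raise_degree}) one takes the extra factor to be $y_0^{d'-d}x_0^{f'-f}$ rather than a general form, so that it is contained in $W$; then $Y^\circ$ is already irreducible and equal to $\{G=0\}^\circ$, and the naive specialisation of the diagonal point applies with no semistable analysis at all. For raising the dimension, the paper does not degenerate to a cone but passes through an auxiliary complete intersection in $\bb P^{n-r+2}\times\bb P^{r+1}$: introduce \emph{two} new $y$-coordinates $a,b$ and impose $ab+ty_0^2=0$ together with $G+y_0^{d-1}x_0^f a+y_1^{d-1}x_1^f b=0$. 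At $t=0$ the fibre splits as $\{a=0\}\cup\{b=0\}$, and each component is \emph{linear} in the surviving new variable with coefficient a unit on $\cal X^\circ$; hence each $Y_i^\circ$ is isomorphic to an open of affine space and $\CH_1(Y_i^\circ\times L,\Lambda)=0$ (Lemma~\ref{Lemma_simple_CH_1}). Now the cokernel of $\Psi$ is all of $\CH_0(Z^\circ\times L,\Lambda)$ with $Z=\{G=0\}$, and the diagonal point of $Z$ furnishes the obstruction. The generic fibre of this family is then shown (Lemma~\ref{lemma_generic_fiber_integral_birational}) to be birational to a $(d+1,f)$ hypersurface in $\bb P^{n-r+1}\times\bb P^{r+1}$, closing the induction.
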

In order to get good results from Theorem \ref{theorem_raise_deg_dim}, one needs examples of low bidegree hypersurfaces to start with. As in the work of Nicaise--Ottem, the 
examples to start with are the quadric surface bundle of Hassett--Pirutka--Tschinkel and the very general bidegree $(2,2)$ hypersurface in $\bb P^2\times \bb P^2$ (see Proposition \ref{Proposition:Tor_simple_subschemes}).
Another important example is the following: using his double cone construction and toric degeneration arguments, Moe has been able to raise the degree of $Q$ by one and the dimension by two and to show that a $(3,2)$ hypersurface in $\bb P^4_k\times \bb P^{3}_k$, where $k$ is a field of characteristic zero, is not stably rational \cite[Thm. 5.1]{Moe2023}. Inspired by this result we show the following theorem:

\begin{restatable}{theorem}{ThreeTwoTheorem}\label{theorem_3_2_in_4_3}
    Let $k$ be a field of characteristic different from two and $\Lambda:=\bb Z/2$. Let $X$ be a very general hypersurface of bidegree $(3,2)$ in $\bb P^4_k\times \bb P^3_k$. Then $\Tor^{\Lambda}(X,\{y_{0}x_{0}y_{1}x_{1}y_{2}x_2=0\})\neq 1$. In particular, $X$ does not admit a decomposition of the diagonal. 
\end{restatable}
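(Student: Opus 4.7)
The plan is to adapt Moe's double cone and toric degeneration argument \cite[Thm.~5.1]{Moe2023} to the cycle-theoretic setting, using the Lange--Pavic--Schreieder degeneration machine that also underlies the proof of Theorem~\ref{theorem_raise_deg_dim}, with the Hassett--Pirutka--Tschinkel quadric surface bundle $Q \subset \bb P^2_k \times \bb P^3_k$ of bidegree $(2,2)$ as the base case. Note that the numerics here --- raising the dimension of the first factor by two while raising the degree by only one --- lie strictly outside the inequality $d' \geq d + (n'-n)$ required by Theorem~\ref{theorem_raise_deg_dim}, so the argument cannot be reduced to that theorem.

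First, I would construct a one-parameter degeneration $\mathcal{X} \to \Spec R$, for $R$ a discrete valuation ring with residue field $k$, whose generic fibre is a very general $(3,2)$ hypersurface $X \subset \bb P^4 \times \bb P^3$ and whose central fibre $X_0$ is an explicit singular $(3,2)$ hypersurface built from $Q$. A natural starting candidate, modelled on a cone over $Q$, is $X_0 = \{y_3 \cdot F_Q(y_0, y_1, y_2; x) = 0\}$, which decomposes as the hyperplane component $H = \{y_3 = 0\} \cong \bb P^3 \times \bb P^3$ together with the cone $\widetilde Q = \{F_Q = 0\} \subset \bb P^4 \times \bb P^3$ over $Q$ with vertex line $\{y_0 = y_1 = y_2 = 0\}$. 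Projection from the vertex exhibits $\widetilde Q$ as birational to $\bb P^2 \times Q$, so in particular $\widetilde Q$ is stably birational to $Q$. Following Moe, one then refines this picture --- by composing with a toric degeneration or by a suitable birational modification --- so that the components of the central fibre have sufficiently mild singularities for the LPS criterion to apply.

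Second, I would propagate the HPT obstruction through the degeneration. By Proposition~\ref{Proposition:Tor_simple_subschemes} (cf.\ \cite[Example~8]{HassetPirutkaTschinkel2018}) one has $\Tor^{\bb Z/2}(Q, \{y_0 x_0 y_1 x_1 = 0\}) \neq 1$, and this class pulls back to $\widetilde Q$ under the rational projection $\widetilde Q \dashrightarrow Q$. The two extra coordinate hyperplanes $\{y_2 = 0\}$ and $\{x_2 = 0\}$ in the definition of $Z = \{y_0 x_0 y_1 x_1 y_2 x_2 = 0\}$ --- which distinguishes the theorem from its HPT analogue --- are included precisely to absorb the cycles contributed by the rational component $H$ and by the exceptional locus of the desingularisation of the cone, so that the restriction of $Z_0$ to $\widetilde Q$ matches the specialisation of the HPT subscheme. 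With this matching in place, the LPS argument then gives $\Tor^{\bb Z/2}(X_0, Z_0) \neq 1$, and the specialisation statement underlying Theorem~\ref{theorem_raise_deg_dim} transfers the non-triviality to the very general fibre.

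The main obstacle will be the geometry of the degeneration. Because the cone component $\widetilde Q$ has a one-dimensional singular vertex, the naive total space $\{y_3 F_Q + t\,G = 0\}$ is not sufficiently regular for a direct application of the LPS criterion, and following Moe one must pass to a toric resolution or to a composition of several elementary one-parameter degenerations. The technical heart of the proof will be to track the mod~$2$ HPT obstruction through this resolution and to verify that the exceptional divisors introduced are absorbed by the enlarged subscheme $Z$ --- rather than creating new rational rulings that trivialise the obstruction. Modulo this verification, the remainder of the argument follows the template of the proof of Theorem~\ref{theorem_raise_deg_dim}.
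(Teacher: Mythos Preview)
Your proposal sketches a direct one-step degeneration of a $(3,2)$ hypersurface in $\bb P^4\times\bb P^3$ to a reducible variety built from the HPT bundle $Q$, and defers the hard part to ``following Moe'' via a toric resolution. This is not the paper's route, and as stated it has real gaps.

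The paper proceeds in \emph{two} steps, passing through an intermediate $(3,2)$ hypersurface in $\bb P^3\times\bb P^3$. First (Lemma~\ref{lemma_3_2_in_3_3}) it applies the machinery of Section~\ref{Section:maintheorem} to go from the HPT $(2,2)$ in $\bb P^2\times\bb P^3$ to an explicit $(3,2)$ hypersurface $X'\subset\bb P^3\times\bb P^3$ with $\Tor^{\bb Z/2}(X',W)>1$; this is just the $d'=d+1,\ n'=n+1$ instance of Theorem~\ref{theorem_raise_deg_dim}. The new idea is the second step: the paper observes (Lemma~\ref{lemma_absorb_the_squares}, the ``multiply through and absorb the squares'' trick of \cite{HassetPirutkaTschinkel2019}) that $X'$ is birational to another $(3,2)$ hypersurface $V\subset\bb P^3\times\bb P^3$ whose equation is a quadric in the $x$-variables with coefficients that are cubic but contain an explicit \emph{square} $y_1^2$ in the $y$-variables. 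One then runs the double-cone degeneration again with $V$ as reference variety, obtaining a $(3,2)$--$(2,0)$ complete intersection in $\bb P^5\times\bb P^3$; thanks to the square in $V$, the substitution $d=-sy_1^2/c$ followed by the coordinate change $x_0\mapsto cx_0/y_1$ (Lemma~\ref{lemma_substitute_coordinate_change}) yields a birational model of bidegree $(3,2)$ in $\bb P^4\times\bb P^3$, rather than the $(4,2)$ one would naively expect. No cones, no toric resolutions, and no adaptation of Moe's motivic argument are needed; the remaining verification is exactly parallel to Section~\ref{Section:maintheorem}.

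Your single-step cone degeneration $\{y_3 F_Q+tG=0\}$ runs into concrete trouble before one even reaches the resolution you flag. The special-fibre components are $H=\{y_3=0\}\cong\bb P^3\times\bb P^3$ and $\widetilde Q=\{F_Q=0\}$; their intersection is not $Q$ but a further cone over $Q$ (vertex $[0{:}0{:}0{:}0{:}1]\times\bb P^3$), and the LPS criterion (Theorem~\ref{Theorem_LS_4.3}) asks for the cokernel of a map out of $\CH_1(H^\circ)\oplus\CH_1(\widetilde Q^\circ)$. Unlike the paper's carefully chosen $Y_i$, the component $H^\circ$ is an open in $\bb P^3\times\bb P^3$ whose $\CH_1$ does not vanish after removing only the six coordinate hyperplanes in $W$, so Lemma~\ref{Lemma_simple_CH_1} is unavailable and you would have to actually compute the obstruction map rather than conclude its source is zero. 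Your appeal to Moe's toric machinery to repair this is the crux, but it is precisely the part you leave unspecified; the absorb-the-squares manoeuvre is what the paper uses instead.
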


As a consequence of the previous results,  
we obtain the following:

\begin{restatable}{theorem}{Maintheorem}\label{Theorem_bidegree_bounds}
   Let $k$ be a field of characteristic different from two and $n\in \bb N$, then a very general bidegree $(d,f)$ hypersurface $X$ in $\bb P^{n-r}_k\times \bb P^{r+1}_k$ does not admit a decomposition of the diagonal in the following cases:
   \begin{enumerate}
       \item $n\geq 4$, $r=1$, $f=2$ and $d\geq n-2$;
       \item $n= 5$, $r=2$, $f=2$, $d\geq 3$ and $n\geq 6$, $r=2$, $f=2$, $d\geq n-3$;
       
       \item $n\geq 5$, $r=2$, $f=3$ and $d\geq n-3$.
   \end{enumerate}
\end{restatable}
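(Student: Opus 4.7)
The plan is to deduce Theorem \ref{Theorem_bidegree_bounds} from Theorem \ref{theorem_raise_deg_dim} applied to three base inputs: the very general bidegree $(2, 2)$ hypersurface in $\bb P^2 \times \bb P^2$ and the Hassett--Pirutka--Tschinkel quadric surface bundle $Q$ of bidegree $(2, 2)$ in $\bb P^2 \times \bb P^3$ (both supplied by Proposition \ref{Proposition:Tor_simple_subschemes}), together with the very general $(3, 2)$ in $\bb P^4 \times \bb P^3$ of Theorem \ref{theorem_3_2_in_4_3}. A key observation is that although Theorem \ref{theorem_raise_deg_dim} is stated with $r$ fixed, swapping the two projective factors (which is harmless for the intrinsic torsion invariant $\Tor^{\Lambda}(-,\{y_{0}x_{0}y_{1}x_{1}=0\})$, as the defining coordinate pattern is symmetric under relabeling) produces a fresh base with a different value of $r$: viewing $Q$ as a bidegree $(2, 2)$ hypersurface in $\bb P^3 \times \bb P^2$ gives a base with $n_0 = 4$, $r = 1$, $d_0 = f_0 = 2$.

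Case (i) then follows immediately: apply Theorem \ref{theorem_raise_deg_dim} to $Q$ in its swapped presentation to obtain the bound $d \geq d_0 + (n - n_0) = n - 2$ for every $n \geq 4$, which matches the claim exactly. For cases (ii) and (iii) with $n \geq 6$, apply Theorem \ref{theorem_raise_deg_dim} to Theorem \ref{theorem_3_2_in_4_3} (so $n_0 = 6$, $r = 2$, $d_0 = 3$, $f_0 = 2$): taking $f' = 2$ gives case (ii) with $d \geq n - 3$, and taking $f' = 3$ gives case (iii) with $d \geq n - 3$.

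The remaining pieces all live in the symmetric ambient $\bb P^3 \times \bb P^3$ corresponding to $n = 5$, and cluster around one anchor case -- case (ii) at $n = 5$, $d = 3$, i.e.\ the very general $(3, 2)$ in $\bb P^3 \times \bb P^3$. Once this anchor is in hand, case (iii) at $n = 5$, $d = 2$ follows by swapping factors (which exchanges bidegrees $(2, 3)$ and $(3, 2)$), and case (iii) at $n = 5$, $d \geq 3$ follows from one further application of Theorem \ref{theorem_raise_deg_dim} keeping $n_0 = n' = 5$ fixed and raising $f$ from $2$ to $3$.

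The main obstacle is therefore the single anchor case, the very general $(3, 2)$ in $\bb P^3 \times \bb P^3$. Theorem \ref{theorem_raise_deg_dim} alone cannot supply it: Theorem \ref{theorem_3_2_in_4_3} forces $n' \geq 6$, while $Q$ in either of its presentations only reaches $d \geq n - 1$ in the $r = 2$ range, falling one short of $d = 3$ at $n = 5$. To bridge this gap I would mirror the structure of the proof of Theorem \ref{theorem_3_2_in_4_3} directly inside $\bb P^3 \times \bb P^3$: construct a one-parameter degeneration of a $(3, 2)$ hypersurface whose special fibre decomposes into components each carrying a controllable obstruction (for instance an HPT-type $(2, 2)$ piece together with suitable coordinate hyperplane components), and apply the Lange--Pavic--Schreieder specialisation principle to transport non-triviality of $\Tor^{\bb Z/2}$ to the generic fibre.
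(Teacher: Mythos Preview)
Your overall plan matches the paper's, and Cases (i) and (ii) for $n\ge 6$ are handled exactly as the paper does. The problem is in your analysis of the ``anchor'' case, where an arithmetic slip has created a phantom gap.

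You claim that $Q$ in its $r=2$ presentation only yields $d\ge n-1$ via Theorem~\ref{theorem_raise_deg_dim}. But the HPT bundle lives in $\bb P^{2}\times\bb P^{3}=\bb P^{n-r}\times\bb P^{r+1}$ with $r=2$ and $n=4$ (not $n=5$), and has $d=2$. Theorem~\ref{theorem_raise_deg_dim} then gives $d'\ge 2+(n'-4)=n'-2$; in particular, at $n'=5$ this is exactly $d'\ge 3$, i.e.\ the very general $(3,2)$ hypersurface in $\bb P^{3}\times\bb P^{3}$. So your anchor is already delivered by one application of Theorem~\ref{theorem_raise_deg_dim} to HPT (this is what the paper does: its Case~(ii), $n=5$, is precisely this), and your proposed extra degeneration is unnecessary. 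Two further minor points: the relative torsion input for $Q$ comes from Proposition~\ref{proposition_rel_tor_ord_HPT}, not from Proposition~\ref{Proposition:Tor_simple_subschemes} (the latter concerns the $(2,2)$ in $\bb P^{2}\times\bb P^{2}$, over characteristic zero only, and plays no role in the paper's proof of Theorem~\ref{Theorem_bidegree_bounds}); and in Case~(iii) the paper starts directly from this $(2,3)$ in $\bb P^{3}\times\bb P^{3}$ (the swap of the anchor) rather than from Theorem~\ref{theorem_3_2_in_4_3}, which gives all $n\ge 5$ in one stroke.
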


We mention two more corollaries. Theorem \ref{theorem_raise_deg_dim}, together with \cite[Theorem 1.4]{FiammengoLuedersCubics}, yields the following conditional statement:
\begin{restatable}{corollary}{conditional_reduction}\label{Cor:reduction_bideg_to_cubic}
    Let $k=\bb C$, $r\geq 3$ odd (or $r=4$). If a very general cubic hypersurface in $\bb P_k^{r+1}$ does not admit a decomposition of the diagonal, then for $d\geq n-r+1\geq 2$ a very general hypersurface of bidegree $(d,3)$ in $\bb P^{n-r}_k\times \bb P^{r+1}_k$ does not admit a decomposition of the diagonal.
\end{restatable}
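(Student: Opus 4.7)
The strategy is to combine the cubics paper \cite[Theorem 1.4]{FiammengoLuedersCubics} (the seed) with the raise-degree-and-dimension machinery of Theorem \ref{theorem_raise_deg_dim} (the bootstrap). The former supplies a low-dimensional, low-degree starting example whose diagonal obstruction is conditional on the corresponding cubic hypothesis, and the latter propagates that obstruction to arbitrary bidegree $(d,3)$ hypersurfaces in the allowed range.

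First, I would invoke \cite[Theorem 1.4]{FiammengoLuedersCubics} in the form that, under the hypothesis that a very general cubic hypersurface in $\bb P^{r+1}_{\bb C}$ does not admit a decomposition of the diagonal (with $r\geq 3$ odd or $r=4$), a very general integral bidegree $(2,3)$ hypersurface $X_0 \subset \bb P^1_{\bb C} \times \bb P^{r+1}_{\bb C}$ satisfies $\Tor^{\Lambda}(X_0, \{y_0 x_0 y_1 x_1 = 0\}) \neq 1$ for a suitable ring $\Lambda$ of positive characteristic coprime to $\Char \bb C$. Morally, specialising one of the two coordinates on $\bb P^1$ degenerates $X_0$ to a cubic in $\bb P^{r+1}_{\bb C}$, which is what converts the cubic hypothesis into the desired statement about bidegree $(2,3)$ hypersurfaces.

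Second, I would apply Theorem \ref{theorem_raise_deg_dim} with input data $(n,d,f)=(r+1,2,3)$ and target $(n',d',f')$ subject to $n'\geq n$, $f'=3\geq f$, and $d'\geq d+(n'-n)=n'-r+1$. This yields $\Tor^{\Lambda}(X',\{y_0 x_0 y_1 x_1=0\})\neq 1$ for a very general bidegree $(d',3)$ hypersurface $X'\subset \bb P^{n'-r}_{\bb C}\times \bb P^{r+1}_{\bb C}$, and in particular rules out a decomposition of the diagonal of $X'$. Relabelling $(n',d')=(n,d)$ reproduces the numerical hypotheses of the corollary exactly.

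The main, and essentially only, obstacle is the compatibility check in the first step: one must verify that the statement of \cite[Theorem 1.4]{FiammengoLuedersCubics} is phrased in precisely the form required by Theorem \ref{theorem_raise_deg_dim}, i.e.\ the same coefficient ring $\Lambda$, the same subscheme $\{y_0 x_0 y_1 x_1 = 0\}$, and an integral representative $X_0$. Once this compatibility is in place, the corollary is a purely formal consequence of iterating the bootstrap, with no further geometry or computation required.
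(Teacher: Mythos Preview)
Your two-step outline (produce a seed bidegree $(2,3)$ example from the cubic hypothesis, then iterate Theorem \ref{theorem_raise_deg_dim}) is exactly the architecture of the paper's proof, and your second step is carried out verbatim there. The content of the corollary lies entirely in the first step, which you correctly flag as the only obstacle but do not resolve.

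The issue is that \cite[Theorem 1.4]{FiammengoLuedersCubics} is not stated for bidegree hypersurfaces at all: as invoked here, its output (together with \cite[Corollary 6.11]{FiammengoLuedersCubics}) is a \emph{quartic} hypersurface
\[
Q=\{x_{r+1}c+x_{r+1}^2M-tx_0^2L=0\}\subset \bb P^{r+1}_{\bar K}
\]
with $\Tor(Q,\{x_0x_1x_{r+1}=0\})>1$, where $c$ is a very general cubic and $M,L$ very general quadrics. The bridge you are missing is the explicit birational identification of this quartic with a bidegree $(2,3)$ hypersurface
\[
X=\{y_0y_1c+y_1^2x_0M-ty_0^2x_0L=0\}\subset \bb P^1\times \bb P^{r},
\]
obtained by matching the affine charts $D_+(x_0)\subset Q$ and $D_+(y_0)\times D_+(x_0)\subset X$ via $x_{r+1}/x_0\leftrightarrow y_1/y_0$. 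This is what transfers the relative torsion order and produces an integral bidegree $(2,3)$ seed with $\Tor(X,\{y_0y_1x_0x_1=0\})>1$.

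Your moral explanation (``specialising a $\bb P^1$-coordinate degenerates $X_0$ to a cubic'') points in the wrong direction: that would attempt to pass information \emph{from} the bidegree hypersurface \emph{to} the cubic, whereas what is needed is the converse, and the intermediate object is a quartic, not a cubic. Once the birational bridge above is in place, the rest of your argument is correct and matches the paper.
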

Note that for $r=3, n=6$, Corollary \ref{Cor:reduction_bideg_to_cubic} is implied by Theorem \ref{theorem_3_2_in_4_3}, not however for $r>3$. The next corollary concerns Fano fibrations over $\bb P^1$.

\begin{restatable}{corollary}{TwoFourTheorem}\label{cor_2_4_in_1-2_6-5}
Let $k$ be a field of characteristic different from two and $\Lambda:=\bb Z/2$. Let $X$ be a very general hypersurface of bidegree $(2,4)$ in $\bb P^1_k\times \bb P^6_k$. Then $\Tor^{\Lambda}(X,\{y_{0}x_{0}y_{1}x_{1}=0\})\neq 1$. In particular, $X$ does not admit a decomposition of the diagonal. 
\end{restatable}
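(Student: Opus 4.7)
The plan is to apply Theorem \ref{theorem_raise_deg_dim} after swapping the two projective factors, so that the target very general bidegree $(2, 4)$ hypersurface in $\bb P^1_k \times \bb P^6_k$ is viewed as a very general bidegree $(4, 2)$ hypersurface in $\bb P^6_k \times \bb P^1_k$ (the subscheme $\{y_0 x_0 y_1 x_1 = 0\}$ is preserved under this swap up to relabeling of variables). This is the case $r = 0$, $n' = 6$, $d' = 4$, $f' = 2$ of Theorem \ref{theorem_raise_deg_dim}.

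Taking the starting parameters $n = 5$, $d = 3$, $f = 2$, the inequalities $d' = 4 \geq d + (n' - n) = 4$ and $f' = 2 \geq f = 2$ of Theorem \ref{theorem_raise_deg_dim} are satisfied. The corollary therefore reduces to exhibiting an integral bidegree $(3, 2)$ hypersurface $X_0 \subset \bb P^5_k \times \bb P^1_k$ with $\Tor^{\bb Z/2}(X_0, \{y_0 x_0 y_1 x_1 = 0\}) \neq 1$. Geometrically, $X_0$ is a Fano $5$-fold of index $3$: a cubic $4$-fold bundle over $\bb P^1$, or equivalently a double cover of $\bb P^5$ ramified along a sextic $4$-fold discriminant.

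This starting example cannot be reached from a lower bidegree by further raising: the candidate bidegree $(2, 2)$ hypersurfaces in $\bb P^n \times \bb P^1$ are all rational over $\bar k$, because by Tsen's theorem the generic fiber of the quadric bundle has a rational point over $k(t)$, and a quadric with a rational point is rational. The cubic bundle case is genuinely different, since a cubic hypersurface with a rational point need not be rational. I would verify the torsion condition on $X_0$ directly, by a dedicated Lange--Pavic--Schreieder degeneration in the spirit of the proof of Theorem \ref{theorem_3_2_in_4_3}, reducing the computation to a tractable reducible special fiber whose components carry controllable unramified cohomology on the complement of $\{y_0 x_0 y_1 x_1 = 0\}$. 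The main obstacle is precisely this starting computation; once it is in place, Theorem \ref{theorem_raise_deg_dim} immediately delivers the corollary.
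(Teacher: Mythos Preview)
Your reduction via Theorem \ref{theorem_raise_deg_dim} with $r=0$ is formally correct, but the ``starting example'' you reduce to is itself an open problem. After swapping factors, your $X_0$ is a bidegree $(2,3)$ hypersurface in $\bb P^1_k\times \bb P^5_k$; this is precisely one of the cases marked ``?'' in Table~\ref{Table:2}. You honestly flag this as the main obstacle, but that means the proof is not complete: you have traded the corollary for an equally hard (arguably harder) statement. There is no dedicated Lange--Pavic--Schreieder degeneration in the paper producing such an $X_0$, and no known unramified class on a $(2,3)$ fivefold in $\bb P^1\times\bb P^5$ to feed into Proposition \ref{unramifiedobstruction_relativedecomposition}.

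The paper's route is entirely different and avoids this obstacle. It does \emph{not} apply Theorem \ref{theorem_raise_deg_dim} at all. Instead it observes that the explicit bidegree $(3,2)$ sixfold $T\subset \bb P^4_{\bar K}\times \bb P^3_{\bar K}$ constructed in Lemma \ref{lemma_substitute_coordinate_change} (whose relative torsion order is already shown to be nontrivial in the proof of Theorem \ref{theorem_3_2_in_4_3}) is birational, via an explicit identification of affine charts, to a bidegree $(2,4)$ hypersurface $S_{2,4}\subset \bb P^1\times \bb P^6$. This is the same ``redistribute variables between the two factors'' trick used for Proposition \ref{prop:proposition_(2,3)_in_P^1timesP^4}. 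Since relative torsion order is a birational invariant of the open complement, one gets $\Tor^\Lambda(S_{2,4},\{S_0W_0S_1W_1=0\})=2$ for free, and Lemma \ref{Lemma:very_general_decomposition} finishes. The key point you are missing is that a sixfold of bidegree $(3,2)$ in $\bb P^4\times\bb P^3$ and a sixfold of bidegree $(2,4)$ in $\bb P^1\times\bb P^6$ live in ambient products of the same total dimension, so one can hope to match them birationally on a chart---and the paper's specific $T$ is built so that this match works.
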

Theorems \ref{theorem_raise_deg_dim} and \ref{theorem_3_2_in_4_3} are proved in Sections \ref{Section:maintheorem} and \ref{Sect:Raising_dimension_without_raising_degree}, the heart of the article. The proofs rely heavily on a cycle theoretic method developed by Lange--Schreieder in \cite{LangeSchreieder2024} which allows to find obstructions to the existence of a decomposition of the diagonal in lower-dimensional strata of a strictly semistable degeneration of a variety. We recall these techniques in Section \ref{Section_Küchle_varieties} sketching a proof of the fact that a Küchle variety of type $b_4$ does not admit a decomposition of the diagonal.

The strategy of our argument to prove Theorem \ref{theorem_raise_deg_dim} can be summarised as follows: firstly, by a degeneration argument, the nonexistence of a decomposition of the diagonal of a hypersurface of bidegree $(d,f)$ in $\bb P^{n-r}\times \bb P^{r+1}$ implies the nonexistence of a decomposition of the diagonal for a $(d,f)-(2,0)$-complete intersection of dimension $n+1$ in $\bb P^{n-r+2}\times \bb P^{r+1}$. Secondly, this complete intersection can be shown to be birational to a hypersurface of bidegree $(d+1,f)$ in $\bb P^{n-r+1}\times \bb P^{r+1}$:
\begin{equation*}\label{equation_argument1}
(d,f) \text{ in }\bb P^{n-r}\times \bb P^{r+1} \rightsquigarrow (d,f)-(2,0) \text{-CI in }\bb P^{n-r+2}\times \bb P^{r+1} \rightsquigarrow (d+1,f) \text{ in }\bb P^{n-r+1}\times \bb P^{r+1}.    
\end{equation*}
This goes under the slogan ``raising the degree and the dimension''. For Theorem \ref{theorem_3_2_in_4_3} we apply this strategy first to $\{Q=0\}$ and then show that for the resulting hypersurface of bidegree $(3,2)$ in $\bb P^{3}\times \bb P^{3}$ we can just ``raise the dimension'', i.e. again we get the nonexistence of a decomposition of the diagonal for a $(3,2)-(2,0)$-complete intersection in $\bb P^{5}\times \bb P^{3}$ but this time this complete intersection can be shown to be birational to a hypersurface of bidegree $(3,2)$ in $\bb P^{4}\times \bb P^{3}$:
\begin{equation*}\label{equation_argument2}
(3,2) \text{ in }\bb P^{3}\times \bb P^{3} \rightsquigarrow (3,2)-(2,0) \text{-CI in }\bb P^{5}\times \bb P^{3} \rightsquigarrow (3,2) \text{ in }\bb P^{4}\times \bb P^{3}.    
\end{equation*}

\begin{remark} Theorem \ref{Theorem_bidegree_bounds}$(i)$ may be interpreted to concern conic bundles, $(ii)$ quadric surface bundles and $(iii)$ cubic surface bundles. At the same time it should be noted that there are many more such bundles and that there is the following optimal result on the rationality problem for quadric bundles over an $m$-dimensional rational base: 
\begin{theorem} \cite[Thm. 1]{Schreieder2019quadricbundles} 
    Let $n,r$ be positive integers with $r\leq 2^n-2$ and let $m\leq n$ be the unique integer with $2^{m-1}-1\leq r\leq 2^m-2$. Then there are smooth unirational complex $r$-fold quadric bundles $X$ over $S=\bb P^{n-m}\times\bb P^m$ such that $X$ does not admit a decomposition of the diagonal.
\end{theorem}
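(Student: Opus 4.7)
The plan is to follow Schreieder's strategy for obstructing a decomposition of the diagonal via unramified cohomology, combined with an explicit construction of quadric bundles over products of projective spaces. For each admissible pair $(n,r)$, let $m$ be determined by $2^{m-1}-1\leq r\leq 2^m-2$ and set $S=\bb P^{n-m}_{\bb C}\times \bb P^m_{\bb C}$. First, I would write down a quadratic form $q$ in $r+2$ variables over the function field $K=\bb C(S)$, designed so that $q$ is a (Pfister) neighbor of an $m$-fold Pfister form $\langle\!\langle a_1,\dots,a_m\rangle\!\rangle$. The numerical constraint on $r$ is exactly what is needed for this construction: the upper bound $r\leq 2^m-2$ ensures $r+2\leq 2^m$ so that $q$ fits as a neighbor of an $m$-fold Pfister form, and the lower bound $r\geq 2^{m-1}-1$ ensures sufficiently large dimension of $q$ to make the quadric bundle unirational while still retaining a nontrivial Pfister invariant.

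Second, I would choose the symbols $a_1,\dots,a_m$ by exploiting the product structure of $S$: some factors would be rational functions pulled back from $\bb P^{n-m}$, the remaining ones from $\bb P^m$, arranged so that their ramification divisors meet transversally and decompose the degeneration locus of $q$ into strata on which one can control the residues of the Milnor symbol $\alpha=\{a_1,\dots,a_m\}\in H^m(K,\mu_2^{\otimes m})$. Using Rost's computation of the kernel of the restriction map along a Pfister neighbor quadric, one checks that $q$ kills exactly the ramification of $\alpha$ but not $\alpha$ itself, yielding a nonzero class $\alpha_X\in H^m_{\mathrm{nr}}(\bb C(X)/\bb C,\mu_2^{\otimes m})$ on a smooth projective model $X\to S$ of the quadric bundle.

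Third, I would conclude via the standard principle (Colliot-Thélène--Pirutka, going back to Bloch--Srinivas) that a nonzero class in $H^m_{\mathrm{nr}}$ obstructs the existence of an integral decomposition of the diagonal on $X$. Unirationality of $X$ is then verified by producing a rational section after a degree-two extension of $K$, which is automatic once $q$ is isotropic over a quadratic extension, a standard feature of quadric bundles over unirational bases.

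The main obstacle, and the technical heart of the argument, is arranging the $m$ symbols on the two-factor base $\bb P^{n-m}\times \bb P^m$ so that the residues vanish simultaneously on $X$; this is precisely where the product structure of $S$ is essential, since it provides enough independent divisors to absorb all ramification of an $m$-fold Pfister class. A secondary difficulty is resolving the singularities of the naive total space of the quadric fibration without killing the unramified class, which requires the delicate control of codimension-one residues at each blow-up that is carried out in Schreieder's paper.
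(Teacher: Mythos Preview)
The paper does not prove this theorem; it is quoted as background from \cite{Schreieder2019quadricbundles}, with only the one-line gloss ``Schreieder proves this theorem by constructing Colliot-Th\'el\`ene--Ojanguren type quadric bundles.'' Your outline matches that description in spirit, but two of your steps are inaccurate enough that a proof written this way would not go through.

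First, you say $q$ is designed to be a Pfister neighbor of the $m$-fold Pfister form $\phi=\langle\langle a_1,\dots,a_m\rangle\rangle$ whose symbol is $\alpha=\{a_1,\dots,a_m\}$. But if $q$ is a neighbor of $\phi$ then $\phi$ becomes isotropic, hence hyperbolic, over $K(q)$, and therefore $\alpha|_{K(X)}=0$: the very class you want to be nontrivial dies on the generic fibre. In the actual CTO/Schreieder construction the form is \emph{not} globally a subform of the relevant Pfister form over $K$; in the prototype $q=\langle a,b,ab,f\rangle$ one has $\det q\sim f\not\sim 1\sim\det\langle\langle a,b\rangle\rangle$, and this is exactly what keeps $\alpha=(a,b)$ nonzero on $K(X)$. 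What \emph{is} true, and what makes $\alpha$ unramified, is that the restriction of $q$ to each ramification divisor of $\alpha$ becomes similar to a subform of an appropriate Pfister form, so the residues vanish there (compare Lemma~\ref{Lemma_subform_of_Pfister} in the present paper for the Hassett--Pirutka--Tschinkel bundle). So the numerics $2^{m-1}+1\le r+2\le 2^m$ are not playing the role you describe, and your explanation of the lower bound via unirationality is also off: unirationality of these quadric bundles is automatic over a rational base and does not depend on $r$.

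Second, your ``secondary difficulty''---controlling residues through an explicit resolution of the total space---is precisely the step Schreieder's method was designed to bypass, and is the reason his bounds are so much stronger than earlier ones. His key technical input is a criterion (essentially Proposition~\ref{unramifiedobstruction_relativedecomposition} above) that yields $\Tor(X)>1$ once one checks that $\tau^*\alpha$ vanishes on every subvariety of $\tau^{-1}(X^{\mathrm{sing}})$ for an \emph{arbitrary} alteration $\tau$, with no resolution ever computed. That vanishing is established uniformly from the quadratic-form structure of the degenerate fibres, not by blow-up bookkeeping.
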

\noindent Note that the $(3,2)$ in $\bb P^4\times \bb P^3$ is 'covered' by that theorem in the following sense: for $r=m=2$ and $n=4$, there exists a quadric surface bundle over a rational $4$-fold which 
does not admit a decomposition of the diagonal.
Schreieder proves this theorem by constructing Colliot-Th\'el\`ene--Ojanguren type quadric bundles. The bounds obtained for bidegree hypersurfaces are slightly weaker (i.e. $e\geq n+3$ for $r=2$, see \cite[Introd.]{Schreieder2019quadricbundles}).
\end{remark}

\begin{remark}
   There is an essential difference in the philosophy of Theorem \cite[Theorem 4.5 \text{ and } Prop. 6.1]{NicaiseOttem2022} and the philosophy of Theorem \cite[Prop. 6.2]{NicaiseOttem2022}. In the first, for a given bidegree hypersurface the degree and dimension in one factor are raised. In the second, a further projective space and further degree are added in the product, respectively multidegree. In \cite{LangeZhang}, with the goal of disproving the existence of a decomposition of the diagonal for multidegree hypersurfaces, Lange--Zhang pursue the second philosophy starting with Schreieder's singular hypersurfaces (of degree $\geq 4$) with nontrivial unramified cohomology classes. This is reflected in their key Theorem 3.12(b).
   Our approach is the first, i.e. to raise the dimension of the factors in order to get better results in low degree starting with quadric surface bundles. 
\end{remark}

In Section \ref{Section_foufolds_bidegree} we compute the relative torsion order of fourfolds of bidegree $(2,3)$ in $\bb P^1\times \bb P^4$ and thereby complete the classification of the very general bidegree threefolds and fourfolds with respect to geometric retract rationality. We finish the introduction by illustrating in tables what is now known about the torsion order of bidegree hypersurfaces in dimensions $3-6$. 
\begingroup
\renewcommand\thefootnote{}

\begin{table}[ht]
\centering
\renewcommand{\arraystretch}{1.12}
\begin{tabular}{c|c|c|c|c}
 bidegree of $X$
& $\bb P^1\times \bb P^3$
& $\bb P^2\times \bb P^2$
& $\bb P^2\times \bb P^3$
& $\bb P^1\times \bb P^4$
\\ \hline
$(1,f)$ & R & R & R & R \\
$(d,1)$ & R & R & R & R \\
$(2,2)$
& R$^*$
& $\Tor(X)>1$\hyperlink{tab1refs}{\textsuperscript{1}}
& $\Tor(X)>1$\hyperlink{tab1refs}{\textsuperscript{3}}
& R$^*$ \\
$(2,f\ge 3)$
& $\Tor(X)>1$\hyperlink{tab1refs}{\textsuperscript{2}}
& $\Tor(X)>1$\hyperlink{tab1refs}{\textsuperscript{5}}
& $\Tor(X)>1$\hyperlink{tab1refs}{\textsuperscript{8}}\ssep\hyperlink{tab1refs}{\textsuperscript{9}}
& $\Tor(X)>1$\hyperlink{tab1refs}{\textsuperscript{4}} \\
$(d\ge 3,2)$
& R$^*$
& $\Tor(X)>1$\hyperlink{tab1refs}{\textsuperscript{5}}
& $\Tor(X)>1$\hyperlink{tab1refs}{\textsuperscript{7}}
& R$^*$ \\
$(d\ge 3,f\geq 3)$
& $\Tor(X)>1$\hyperlink{tab1refs}{\textsuperscript{2}}
& $\Tor(X)>1$\hyperlink{tab1refs}{\textsuperscript{5}}\ssep\hyperlink{tab1refs}{\textsuperscript{10}}
& $\Tor(X)>1$\hyperlink{tab1refs}{\textsuperscript{7}}
& $\Tor(X)>1$\hyperlink{tab1refs}{\textsuperscript{6}} \\
\end{tabular}

\caption{Dimensions $3$ and $4$: $\Tor(X)>1$ indicates failure of a decomposition of the diagonal, R indicates (geometric) rationality and R$^*$ indicates (geometric) rationality by a theorem of Lang (see Remark \ref{Rem:Geo-nonGeo}).\;
\protect\hypertarget{tab1refs}{}%
\textnormal{(References: }\textsuperscript{1}\cite{HassetTschinkel2019},\;
\textsuperscript{2}\cite{OkadaKrylov2020},\;
\textsuperscript{3}\cite{HassetPirutkaTschinkel2018},\;
\textsuperscript{4} Prop.~\ref{prop:proposition_(2,3)_in_P^1timesP^4},\;
\textsuperscript{5}\cite{HassetKreshTschinkel},\;
\textsuperscript{6} Thm.~\ref{theorem_raise_deg_dim},\;
\textsuperscript{7}\cite{Schreieder2018_quadricbundles},\;
\textsuperscript{8}\cite{AuelBohningPirutka2018},\;
\textsuperscript{9}\cite{AhmadinezhadOkada2018},\;
\textsuperscript{10}\protect\cite{Pirutka2025}\textnormal{).}}
\label{Table:1}
\end{table}

\endgroup
\FloatBarrier
\begingroup
\renewcommand\thefootnote{}

\begin{table}[ht]
\centering
\renewcommand{\arraystretch}{1.12}
\begin{tabular}{c|c|c|c|c|c|c}
bidegree of $X$
& $\bb P^1\times \bb P^5$
& $\bb P^2\times \bb P^4$
& $\bb P^3\times \bb P^3$
& $\bb P^1\times \bb P^6$
& $\bb P^2\times \bb P^5$
& $\bb P^3\times \bb P^4$
\\ \hline
$(1,f)$ & R & R & R & R & R & R \\
$(d,1)$ & R & R & R & R & R & R \\

$(2,2)$
& R$^*$ & R$^*$ & ? & R$^*$ & R$^*$ & ? \\

$(3,2)$
& R$^*$ & R$^*$ & $\Tor(X)>1\hyperlink{tab2refs}{\textsuperscript{1}}$ & R$^*$ & R$^*$ & ? \\

$(2,3)$
& ? & $\Tor(X)>1\hyperlink{tab2refs}{\textsuperscript{1}}$ & $\Tor(X)>1\hyperlink{tab2refs}{\textsuperscript{1}}$ & ? & ? & $\Tor(X)>1\hyperlink{tab2refs}{\textsuperscript{2}}$ \\

$(3,3)$
& ? & $\Tor(X)>1\hyperlink{tab2refs}{\textsuperscript{1}}$ & $\Tor(X)>1\hyperlink{tab2refs}{\textsuperscript{1}}$ & ? & ? & $\Tor(X)>1\hyperlink{tab2refs}{\textsuperscript{2}}$ \\

$(d\geq 2,f\geq 4)$
& $\Tor(X)>1\hyperlink{tab2refs}{\textsuperscript{4}}$ &
$\Tor(X)>1\hyperlink{tab2refs}{\textsuperscript{4}}\ssep\hyperlink{tab1refs}{\textsuperscript{1}}$ &
$\Tor(X)>1\hyperlink{tab2refs}{\textsuperscript{1}}$
& $\Tor(X)>1\hyperlink{tab2refs}{\textsuperscript{4}}\ssep\hyperlink{tab1refs}{\textsuperscript{3}}$ &
$\Tor(X)>1\hyperlink{tab2refs}{\textsuperscript{4}}\ssep\hyperlink{tab1refs}{\textsuperscript{3}}$ &
$\Tor(X)>1\hyperlink{tab2refs}{\textsuperscript{4}}\ssep\hyperlink{tab1refs}{\textsuperscript{2}}$ \\

$(d\geq 4,f= 2)$
& R$^*$ & R$^*$ & $\Tor(X)>1\hyperlink{tab2refs}{\textsuperscript{1}}$
& R$^*$ & R$^*$ & $\Tor(X)>1\hyperlink{tab2refs}{\textsuperscript{2}}$ \\

$(d\geq 4,f= 3)$
& ? & $\Tor(X)>1\hyperlink{tab2refs}{\textsuperscript{1}}$ & $\Tor(X)>1\hyperlink{tab2refs}{\textsuperscript{1}}$
& ? & ? & $\Tor(X)>1\hyperlink{tab2refs}{\textsuperscript{2}}$ \\
\end{tabular}

\caption{Dimensions $5$ and $6$: the notation is as in Table \ref{Table:1}; the cases marked with ? are open.\;
\protect\hypertarget{tab2refs}{}%
\textnormal{(References: }\textsuperscript{1} Thm.~\ref{Theorem_bidegree_bounds},\;
\textsuperscript{2} Thm.~\ref{theorem_3_2_in_4_3},\;
\textsuperscript{3} Cor.~\ref{cor_2_4_in_1-2_6-5},\;
\textsuperscript{4} \protect\cite[Theorem~5.7]{LangeZhang} (in the two columns on the right if $d,f$ are large enough)\textnormal{).}}
\label{Table:2}
\end{table}

\endgroup

\begin{remark}\label{Rem:Geo-nonGeo}
By Springer's theorem \cite{Springer}, if a flat morphism of projective varieties $f:X\to S$ is a quadric bundle and the base $S$ is rational, then $X$ is rational if $f$ admits a multisection of odd degree. By a theorem of Lang (see \cite[II. 4.5]{SerreGalCoh}), $f$ admits a section if $\dim(\text{fib}(f))>s^{\dim S}-2$ and if the ground field is algebraically closed.
    The cases marked by R$^*$ in Table \ref{Table:1} are therefore rational by Lang's Theorem. 
\end{remark}

\begin{remark}
    It follows from Corollary \ref{cor_2_4_in_1-2_6-5} and Theorem \ref{theorem_raise_deg_dim} that the Conjecture \cite[Conj. 1.6]{AhmadinezhadOkada2018} holds for retract rationality of bidegree hypersurfaces of dimension at most seven.
\end{remark}

\paragraph{Acknowledgements.} The second author is funded by the Deutsche Forschungsgemeinschaft (DFG, German Research Foundation), project number $557768455$ and by the Deutsche Forschungsgemeinschaft (DFG, German Research Foundation)
TRR 326 \textit{Geometry and Arithmetic of Uniformized Structures}, project number 444845124.

\section{Obstruction to the relative decomposition of the diagonal}

\subsection{The relative torsion order}
We recall the definition of the relative torsion order and some of its properties from \cite{LangeSchreieder2024}. For more details we refer to loc. cit.


\begin{definition}
Let $X$ be a variety over a field $k$ and let $\Lambda$ be a ring. 
\begin{enumerate}
    \item $X$ admits a \textit{decomposition of the diagonal} if there exists a zero-cycle $z\in \CH_0(X)$ and a cycle $Z\in Z_{\dim X}(X\times_k X)$ whose support does not dominate the second factor of $X\times_k X$ such that
 $$e\cdot\Delta_X=z\times X+Z\in \CH_{\dim X}(X\times_k X)$$
 for $e=1$. The smallest integer $e$ such that a decomposition as above exists is also called the \textit{torsion order} of $X$ and denoted by $\Tor(X)$.
\item Let $\delta_X$ denote the image of the diagonal $\Delta\subset X\times X$ in $\CH_0(X_{k(X)},\Lambda)=\indlim_{V\subset X}\CH_{\dim X}(X\times V,\Lambda)$. Here $\CH_0(X_{k(X)},\Lambda):=\CH_0(X_{k(X)})\otimes_{\bb Z}\Lambda$. We say that $X$ \textit{admits a $\Lambda$-decomposition of the diagonal relative to a closed subset $W\subset X$} if
    $$\delta_X\in \Im (\CH_0(W_{k(X)},\Lambda)\to \CH_0(X_{k(X)},\Lambda)).$$
\item    The $\Lambda$-\textit{torsion order} of $X$ relative to a closed subset $W\subset X$, denoted by $\Tor^\Lambda(X,W)$, is the order of the element 
    $$\delta_X|_U=\delta_U\in \CH_0(U_{k(X)},\Lambda).$$
    \end{enumerate}
\end{definition}

\begin{remark}\label{Remark_LS_3.4}
    Note that $\Tor^\Lambda(X,W)= \Tor^\Lambda(U,\emptyset).$
Furthermore, $\Tor^\Lambda(X,W)=1$ iff $X$ admits a $\Lambda$-decomposition of the diagonal relative to $W$ \cite[Rem. 3.4]{LangeSchreieder2024}.
\end{remark}

The torsion order has the following important properties which we will need:
\begin{lemma}\label{Lemma_LS_3.6} \cite[Lem. 3.6]{LangeSchreieder2024}
    Let $X$ be a variety over a field $k$ and let $W\subset X$ be closed. Then the following hold:
    \begin{enumerate}
        \item[(a)] For all $m\in \bb Z$, $\Tor^{\bb Z/m}(X,W)|\Tor^{\bb Z}(X,W)$.
        \item[(b)] Let $W'\subset W\subset X$ be closed subsets, then $\Tor^{\Lambda}(X,W)|\Tor^{\Lambda}(X,W')$. 
        \item[(c)] $\Tor(X)$ is the minimum of the relative torsion orders $\Tor^{\bb Z}(X,W)$ where $W\subset X$ runs through all closed subsets of dimension zero.
         \item[(d)] If $\deg :\CH_0(X)\to \bb Z$ is an isomorphism, then $\Tor(X)=\Tor^\bb Z(X,W)$ for any closed subset $W\subset X$ of dimension zero which contains a zero-cycle of degree $1$.
        \item[(e)] If $k=\bar k$ is algebraically closed, then $\Tor^{\Lambda}(X,W)=\Tor^{\Lambda}(X_L,W_L)$ for any ring $\Lambda$ and any field extension $L/k$.
    \end{enumerate}
\end{lemma}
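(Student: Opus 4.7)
The plan is to treat the five items separately, as each expresses a basic compatibility of the relative torsion order rather than a deep fact. Parts (a) and (b) should follow formally from the definition of $\delta_X$ and the functoriality of the Chow groups of zero-cycles. For (a), the ring map $\bb Z \to \bb Z/m$ induces a commutative square
\[
\begin{array}{ccc}
\CH_0(W_{k(X)}) & \to & \CH_0(X_{k(X)}) \\
\downarrow & & \downarrow \\
\CH_0(W_{k(X)},\bb Z/m) & \to & \CH_0(X_{k(X)},\bb Z/m)
\end{array}
\]
under which $\delta_X$ maps to $\delta_X \otimes 1$; thus if $e\,\delta_X$ lies in the image of the upper map, so does its reduction mod $m$, proving the divisibility. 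For (b) the inclusion $W'\subset W$ gives a factorisation $\CH_0(W'_{k(X)},\Lambda)\to \CH_0(W_{k(X)},\Lambda)\to \CH_0(X_{k(X)},\Lambda)$, so the image from $W'$ sits inside the image from $W$ and any relation witnessed by $W'$ is witnessed by $W$.

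For (c) and (d) the strategy is to translate between the classical and relative notions using Remark \ref{Remark_LS_3.4} together with the localisation sequence for $(X\setminus W)\hookrightarrow X$ evaluated at the generic point of the second factor. Starting from a classical decomposition $e\Delta_X=z\times X+Z$ where $Z$ does not dominate the second factor, restriction to $\Spec k(X)$ of the second factor kills $Z$ and yields $e\,\delta_X=z_{k(X)}$ in $\CH_0(X_{k(X)})$; taking $W=\supp(z)$ (a zero-dimensional closed subset) shows $\Tor^{\bb Z}(X,W)\mid \Tor(X)$. Conversely, a relation $e\,\delta_X=i_*(w)$ with $W$ zero-dimensional spreads to an equality of cycles on $X\times V$ for some open $V\subset X$, which by the localisation sequence gives a classical decomposition with the same $e$. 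Minimising over $W$ yields (c). For (d), the hypothesis $\deg:\CH_0(X)\isoto \bb Z$ forces $z\sim e\cdot[p]$ whenever $p\in W$ has degree one, so any classical decomposition of order $e$ automatically provides a relative decomposition supported on $W$, giving the reverse inequality $\Tor^{\bb Z}(X,W)\leq \Tor(X)$, which combined with (c) is the desired equality.

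For (e) the key input is that Chow groups of zero-cycles commute with filtered colimits of flat extensions, so writing $L=\varinjlim_\alpha A_\alpha$ as a filtered union of its finitely generated $k$-subalgebras gives $\CH_0(U_{L(X_L)},\Lambda)=\varinjlim_\alpha \CH_0(U_{\Frac(A_\alpha\otimes_k k(X))},\Lambda)$. One inclusion is free: base change sends a relative decomposition over $k$ to one over $L$, so $\Tor^{\Lambda}(X_L,W_L)\mid \Tor^{\Lambda}(X,W)$. For the other, a relation of order $e$ over $L$ is already defined over some finitely generated $k$-subalgebra $A$, and the hypothesis $k=\bar k$ provides a $k$-rational point $\Spec k\to \Spec A$ at which we can specialise; this pulls the relation back to one over $k$ without changing $e$, so the torsion orders agree. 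The main delicate point I anticipate is ensuring that this spreading-out and specialisation is compatible with the identification of $U_{k(X)}$ inside $U_{L(X_L)}$, which is where being able to choose a closed point after working with $W$ already closed in $X$ becomes essential; everything else is essentially bookkeeping.
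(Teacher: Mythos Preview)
The paper does not actually prove this lemma: it is quoted verbatim from \cite[Lem.~3.6]{LangeSchreieder2024} without argument, so there is no ``paper's own proof'' to compare against. Your sketch is the standard one and is essentially what one finds in the cited reference; each of the five points is handled by the expected mechanism (change of coefficients, factorisation through a larger $W$, passage to the generic point plus localisation, and spreading-out/specialisation over an algebraically closed base).

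Two small places deserve a touch more care. In (d) the hypothesis is that $W$ contains a zero-cycle of degree~$1$, not a closed point of degree~$1$; your argument goes through verbatim if you replace $[p]$ by that zero-cycle $w_0$, but as written it is slightly imprecise. In the converse direction of (c), after spreading $e\,\delta_X=i_*(w)$ to $X\times V$ and closing up, you obtain a $(\dim X)$-cycle supported on $W\times X$; to conclude that this cycle is of the form $z\times X$ for some $z\in Z_0(X)$ you are implicitly using that each $X_{\kappa(p)}$ is irreducible, i.e.\ that $X$ is geometrically integral. This is harmless in the intended applications but is worth noting. Similarly in (e), the specialisation must be performed in the $L$-direction while keeping track of the $k(X)$-direction (one spreads the relation to $U\times_k V$ for $V$ open in a $k$-model of a finitely generated subextension of $L$, and then restricts via the Gysin map at a smooth $k$-point of $V$, which sends the relative diagonal to the diagonal); your final sentence shows you are aware of this, and the details are routine once set up correctly.
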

\subsection{Torsion order relative to $CH_0$-bounded and universally $CH_0$-trivial varieties}
Let $k$ be a field and $X$ a $k$-variety, recall that a zero cycle $\sum_pn_pp\in Z_0(X)$ is effective if $n_p\geq 0$ for all closed points $p\in X$. The following notion appears in \cite{Voisin2025delPezzo} and \cite{Voisi2025Fanos} and generalizes universal $CH_0$-triviality.
\begin{definition}\cite[Def. 1.6]{Voisi2025Fanos}
  Let $X$ be a proper $k$-scheme. We say that $X$ has bounded $CH_0$-group if there exists a number $N$
(possibly depending on $X$ and $k$), such that for any field $L/k$, any $0$-cycle $z\in CH_0(X_L)$ of degree deg $z\geq N$ is effective (that is, rationally equivalent to an effective $0$-cycle). $X$ has unbounded $CH_0$-group if such an integer $N$ does not exist.
\end{definition}

\begin{example}
\begin{enumerate}
    \item 
    Let $k$ be algebraically closed, then every smooth proper curve $X$ over $k$ is bounded by Riemann-Roch (take $N$ as the genus of $X$).
    \item For any field $k$, a universally $CH_0$-trivial $k$-variety has bounded $CH_0$-group (with $N=0$). In particular, by \cite[Corollary 2.2]{Voisin_Cubics2017} for $X$ a smooth complex projective surface with $\CH_0(X)\cong \bb Z$ and $H^*(X,\bb Z)_{\mathrm{tor}}=0$, $X$ has bounded $CH_0$-group. 
    \end{enumerate}
\end{example}
\begin{remark}
    Universal $CH_0$-triviality and $CH_0$-boundedness are easy to distinguish in dimension one (complex elliptic curves have huge yet bounded $CH_0$). The main tools to detect these notions are sections in $H^0(X,\wedge^i\Omega_X)$, which vanish in the range $i\geq 1$ for universal $CH_0$-triviality and in the range $i\geq 2$ for $CH_0$-boundedness. In the case of an elliptic curve $E$, $H^0(E,\Omega_E=\roi_E)=k$ and all higher differentials vanish for dimension reasons.
\end{remark}
\begin{lemma}\label{Lemma:Tor_simple_subschemes}
    Let $k$ be a field of characteristic zero, $X$ be a smooth proper $k$-variety and $f:W\hookrightarrow X$ a closed $k$-subscheme. Assume at least one of the following holds: 
    \begin{enumerate}
        \item $X$ is not universally $\CH_0$-trivial and $W$ is universally $\CH_0$-trivial.
        \item $X$ is not $\CH_0$-bounded and $W$ is $\CH_0$-bounded.
    \end{enumerate}
    Then $\Tor^\bb Z(X,W)>1.$
\end{lemma}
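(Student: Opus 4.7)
The plan is to prove the contrapositive in both cases: assume $\Tor^{\bb Z}(X,W)=1$ and deduce that universal $\CH_0$-triviality (case (i)) or $\CH_0$-boundedness (case (ii)) of $W$ transfers to $X$. The hypothesis $\Tor^{\bb Z}(X,W)=1$ means $\delta_X\in \Im(\CH_0(W_{k(X)})\to \CH_0(X_{k(X)}))$; unwinding the ind-limit presentation of $\CH_0(X_{k(X)})$ as $\varinjlim_V \CH_n(X\times V)$ (with $n=\dim X$) and combining with the localization sequence for the open immersion $X\times V\hookrightarrow X\times X$, one obtains a decomposition
\[
 [\Delta_X]=(f\times \mathrm{id}_X)_*\tilde Z_1+Z_2\qquad\text{in }\CH_n(X\times X),
\]
with $\tilde Z_1\in \CH_n(W\times X)$ and $Z_2$ supported on $X\times S$ for some proper closed subset $S\subsetneq X$.

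The key intermediate step I would establish is that $f_*\colon \CH_0(W_L)\to \CH_0(X_L)$ is surjective for every field extension $L/k$. Fix $\alpha\in \CH_0(X_L)$. Since $k$ has characteristic zero, $L$ is infinite, so the Chow moving lemma on the smooth projective $L$-variety $X_L$ produces $\alpha'\sim \alpha$ with $|\alpha'|\cap S_L=\emptyset$. Act by the transpose correspondence $\gamma^*(\beta):=(p_1)_*(\gamma\cdot p_2^*\beta)$ on $\alpha'$: the identity $[\Delta_X]^*(\alpha')=\alpha'$ together with the decomposition gives
\[
 \alpha'=\bigl((f\times \mathrm{id})_*\tilde Z_1\bigr)^*(\alpha')+Z_2^*(\alpha').
\]
The refined intersection $Z_2\cdot p_2^*\alpha'$ has support in $(X\times S_L)\cap(X\times |\alpha'|)=X\times (S_L\cap |\alpha'|)=\emptyset$, so it vanishes and $Z_2^*(\alpha')=0$. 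By the projection formula for the closed immersion $f\times \mathrm{id}$, the remaining term equals $f_*(\tilde Z_1^*(\alpha'))$, where $\tilde Z_1^*\colon \CH_0(X_L)\to \CH_0(W_L)$ is the correspondence action of $\tilde Z_1$ viewed from the second factor to the first. Hence $\alpha=\alpha'\in f_*\CH_0(W_L)$, which proves the surjectivity.

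The two cases then follow directly. In case (i), universal $\CH_0$-triviality of $W$ gives $\deg\colon \CH_0(W_L)\isoto \bb Z$ for every $L/k$; so for $\alpha\in \CH_0(X_L)$ of degree zero, writing $\alpha=f_*\beta$ yields $\deg\beta=0$, hence $\beta=0$ and $\alpha=0$, proving that $X$ is universally $\CH_0$-trivial. In case (ii), if $W$ is $\CH_0$-bounded with constant $N$ and $\alpha\in \CH_0(X_L)$ satisfies $\deg\alpha\geq N$, then $\alpha=f_*\beta$ has $\deg\beta=\deg\alpha\geq N$, so $\beta$ is rationally equivalent to an effective cycle $\beta'$, and since $f$ is a closed immersion $f_*\beta'$ is effective and represents $\alpha$. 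Thus $X$ is $\CH_0$-bounded with the same constant $N$.

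The main technical point to be careful about is the computation in the middle paragraph: one must justify that support-disjointness of $Z_2$ and $p_2^*\alpha'$ on $X_L\times X_L$ forces the refined intersection product to vanish, and that the projection formula really yields $\bigl((f\times \mathrm{id})_*\tilde Z_1\bigr)^*(\alpha')=f_*(\tilde Z_1^*(\alpha'))$ after base change to $L$. Both are standard, but checking them cleanly is where most of the bookkeeping sits; once they are in place, the two implications of the lemma are almost immediate from the definitions of universal $\CH_0$-triviality and $\CH_0$-boundedness.
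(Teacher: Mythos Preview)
Your argument is correct and follows the same route as the paper: both reduce the lemma to the surjectivity of $f_*\colon \CH_0(W_L)\to \CH_0(X_L)$ for every $L/k$, and then conclude from the commutative degree triangle. The only difference is that the paper obtains this surjectivity by a one-line citation of \cite[Lem.~3.7]{LangeSchreieder2024}, while you reprove it from scratch via the explicit decomposition $[\Delta_X]=(f\times\mathrm{id})_*\tilde Z_1+Z_2$ and the correspondence action; you also spell out the deductions for cases (i) and (ii) more fully than the paper's terse ``this implies that neither (i) nor (ii) can hold''. One small slip to fix: you call $X_L$ ``smooth projective'' when the hypothesis is only smooth \emph{proper}; the moving-of-zero-cycles step still goes through over an infinite field on a smooth proper variety, but you should phrase it that way rather than invoking the projective Chow moving lemma.
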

\begin{proof}
    Assume by contradiction that $\Tor^\bb Z(X,W)=1$. It then follows from \cite[Lem. 3.7]{LangeSchreieder2024} that the pushforward $f_*:CH_0(W_L)\to CH_0(X_L)$ is surjective for all extensions $L/k$. As the degree is compatible with proper pushforward, i.e. $\deg f_*[z]=\deg [z]$ for $[z]\in \CH_0(W_L)$ and the diagram
$$\xymatrix{
CH_0(W_L) \ar[dr]_{\deg} \ar[rr]^{f_*} & & CH_0(X_L) \ar[dl]^{\deg}\\
 & \bb Z          &}$$   
    commutes, this implies that neither $(i)$ nor $(ii)$ can hold.
\end{proof}
\begin{proposition}\label{Proposition:Tor_simple_subschemes}
    Let $k$ be an algebraically closed field of characteristic zero, $X$ be a very general bidegree $(2,2)$ hypersurface in $\Proj k[y_0,y_1,y_2]\times \Proj k[x_0,x_1,x_2]$, then $\infty>\Tor^\bb Z(X,\{\prod x_i\prod y_i=0\})>1$.
\end{proposition}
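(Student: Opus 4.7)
The plan is to establish the upper and lower bounds separately. For the upper bound $\Tor^{\bb Z}(X,W) < \infty$, I would use that $X$ is a Fano threefold (its canonical class is $\mathcal O(-1,-1)|_X$), hence rationally connected, so $\CH_0(X) \cong \bb Z$ and $\Tor(X) < \infty$. Since $k$ is algebraically closed, $W$ contains a $k$-rational point $p$; Lemma \ref{Lemma_LS_3.6}(d) gives $\Tor^{\bb Z}(X,\{p\}) = \Tor(X)$, and Lemma \ref{Lemma_LS_3.6}(b) then yields $\Tor^{\bb Z}(X,W) \mid \Tor(X) < \infty$.

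For the lower bound, the plan is to apply Lemma \ref{Lemma:Tor_simple_subschemes}(i). One of its inputs — that $X$ is not universally $\CH_0$-trivial — is provided by the theorem of Hassett--Kresch--Tschinkel \cite{HassetKreshTschinkel}, who prove $\Tor(X) > 1$ for a very general bidegree $(2,2)$ hypersurface in $\bb P^2 \times \bb P^2$. The bulk of the work will be to verify the other hypothesis, namely that $W = X \cap \{y_0 y_1 y_2 x_0 x_1 x_2 = 0\}$ is universally $\CH_0$-trivial.

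To this end, I would first observe that the six irreducible components $W_{y_i} := X\cap\{y_i=0\}$ and $W_{x_j} := X\cap\{x_j=0\}$ are, for very general $X$, smooth bidegree $(2,2)$ hypersurfaces in $\bb P^1 \times \bb P^2$; projecting to the $\bb P^2$-factor realises each as a double cover of $\bb P^2$ branched along a smooth plane quartic, i.e.~a del Pezzo surface of degree two, which over $k = \bar k$ is rational and hence universally $\CH_0$-trivial. Next, for any field extension $L/k$, I would use the Mayer--Vietoris presentation
\[
\CH_0(W_L) \;=\; \bigoplus_i \CH_0((W_i)_L) \;\Big/\; \bigl\langle (\iota_i)_*[p] - (\iota_j)_*[p] \;\big|\; i \neq j,\; p \in (W_i \cap W_j)_L \text{ closed} \bigr\rangle,
\]
where $\iota_i : W_i \cap W_j \hookrightarrow W_i$ is the inclusion. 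Each term $\CH_0((W_i)_L)$ in the direct sum is $\bb Z$ via the degree map, and for very general $X$ each pairwise intersection $W_i \cap W_j$ is a non-empty smooth curve (a conic if the indices come from the same factor; otherwise an elliptic curve arising as a smooth bidegree $(2,2)$ curve in $\bb P^1 \times \bb P^1$), defined over the algebraically closed field $k$ and hence containing an $L$-point. The relations therefore include $e_i - e_j$ for every pair $i \neq j$, forcing $\CH_0(W_L) \cong \bb Z$; hence $W$ is universally $\CH_0$-trivial and Lemma \ref{Lemma:Tor_simple_subschemes}(i) concludes.

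I expect the main obstacle to be this last computation: some pairwise intersections are elliptic curves, which are not themselves universally $\CH_0$-trivial, so a priori they might obstruct universal $\CH_0$-triviality of $W$. The key observation, however, is that only the degrees of cycles on $W_i \cap W_j$ enter the Mayer--Vietoris boundary — the full $\CH_0$ of an elliptic curve is invisible — and the existence of a $k$-rational (hence $L$-rational) point on the intersection, automatic over algebraically closed $k$, already produces the desired generator $e_i - e_j$ of the relations.
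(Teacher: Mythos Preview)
Your argument is correct and follows essentially the same route as the paper: both invoke Lemma~\ref{Lemma:Tor_simple_subschemes}(i), identify the components of $W$ as smooth rational del Pezzo surfaces (the paper via adjunction, you via the double-cover description), and obtain the upper bound from $X$ being Fano. The only differences are that the paper shortcuts your hands-on Mayer--Vietoris gluing by citing \cite[Lemma~2.4]{CTPirutka:cyclic_covers}, and attributes the failure of universal $\CH_0$-triviality of $X$ to \cite[8.2]{HassetTschinkel2019} rather than \cite{HassetKreshTschinkel}.
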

\begin{proof}
    By \cite[8.2]{HassetTschinkel2019} (see also \cite[Theorem 6.2]{AuelBigazziet.al.2021}) the $k$-variety $X$ is not universally $CH_0$-trivial.  By \cite[Lemma 2.4]{CTPirutka:cyclic_covers} and symmetry of the equations, in order to apply Lemma \ref{Lemma:Tor_simple_subschemes} it is enough to show that the divisors $S_i=\{x_i=0\}\subset X$ are universally $CH_0$-trivial. As $X$ is very general, Bertini's theorem implies that the $S_i$ are smooth integral bidegree $(2,2)$ hypersurfaces in $\bb P^1_k\times \bb P^2_k$. Denote by $H_1$ and $H_2$ the generators of $\Pic (\bb P^1_k\times \bb P^2_k)$. Since $K_{\bb P^1_k\times \bb P^2_k}=-2H_1-3H_2$, it follows by the adjunction formula that 
    $$K_{S_i}=(K_{\bb P^1_k\times \bb P^2_k}+S_i)|_{S_i}=(-2H_1-3H_2+2H_1+2H_2)_{S_i}=-H_2|_{S_i}$$
    and therefore $-K_{S_i}=H_2|_{S_i}$ is ample as the projection to the second factor $pr_2:S\to \bb P^2$ is finite for a very general choice of $X$ \cite[Tag 0892]{stacks-project}.    
     It follows that $S_i$ is a rational del Pezzo surface and is universally $\CH_0$-trivial. 

     Finally note that $\infty>\Tor^\bb Z(X,\{\prod x_i\prod y_i=0\})$ since $X$ is Fano with anticanonical divisor $\cal O_X(1,1)$ (see e.g. \cite[7.2]{Schreiedersurvey} and use Lemma \ref{Lemma_LS_3.6}(b)).
\end{proof}
\begin{remark}
    Many interesting threefolds that do not admit a decomposition of the diagonal have $\essdim =3$ by \cite{Mboro2019}, we do not know if a very general bidegree $(2,2)$ hypersurface $X$ in $\bb P^2\times \bb P^2$ also satisfies $\essdim X=3$.  
\end{remark}
\subsection{Unramified cohomology classes and relative torsion order}
In this section we extend \cite[Prop. 3.1]{SchreiederAMS} to a criterion which allows 
one not only to disprove the existence of a decomposition of the diagonal but also the existence of a relative decomposition of the diagonal. This is well-known to the expert and sketched in \cite[Proposition 6.1]{Schreieder2021}. We state and give details for the convenience of the reader and for future reference.

\begin{proposition}\label{unramifiedobstruction_relativedecomposition} \cite[Prop. 3.1]{SchreiederAMS}\cite[Proposition 6.1]{Schreieder2021}
    Let $Y$ be an integral variety over an algebraically closed field $k$ and let $W\subset Y$ be a closed subscheme. 
    Let $S$ be a finite set of closed points of $Y$. Let $\ell$ be a prime different from $ch(k)$ and let $\tau:Y'\to Y$ be an alteration whose degree is prime to $\ell$. Suppose that for some $i\geq 1$ there is a nontrivial class (resp. class of order $\ell$) $\alpha\in H^i_{nr}(k(Y)/k,\bb Z/\ell)$ such that 
    $$(\tau^*\alpha)|_E=0\in H^i(k(E),\bb Z/\ell)$$
    for any subvariety $E\subset (\tau^{-1}(W)\cup \tau^{-1}(Y^{\mathrm{sing}}))$, then $Y$ admits neither an integral nor a $\Lambda=\bb Z/\ell$-decomposition of the diagonal relative to $$V:=W\cup S\cup Y^{\mathrm{sing}}$$ 
    (resp. $\ell\,|\,\Tor^{\bb Z/\ell}(X,V)$.).
\end{proposition}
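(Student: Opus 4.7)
The plan is to argue by contradiction: suppose $Y$ admits a $\Lambda = \mathbb{Z}/\ell$-decomposition of the diagonal relative to $V$, i.e.\ $\delta_Y$ lies in the image of $\CH_0(V_{k(Y)},\Lambda) \to \CH_0(Y_{k(Y)},\Lambda)$. I would then evaluate the unramified class $\alpha$ against this decomposition on the smooth alteration $Y'$ to force $\alpha = 0 \in H^i(k(Y),\mathbb{Z}/\ell)$, contradicting the hypothesis. Since $\deg(\tau)$ is prime to $\ell$, the projection formula $\tau_* \tau^* = \deg(\tau)$ shows that $\tau^*\colon H^i(k(Y),\mathbb{Z}/\ell) \to H^i(k(Y'),\mathbb{Z}/\ell)$ is injective, and the class $\tau^*\alpha$ remains unramified on the smooth model $Y'$; hence it suffices to prove $\tau^*\alpha = 0$.

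The heart of the argument is to pull back the decomposition through $\tau$ and pair it with $\tau^*\alpha$ via the Merkurjev evaluation. Writing the assumed decomposition as $\delta_Y = \sum_i n_i [P_i]$ with $P_i$ closed points of $V_{k(Y)}$ whose Zariski closures $Z_i \subset Y$ lie in $V$, the pullback through $\tau$ produces a decomposition of $\tau^*\delta_Y \in \CH_0(Y'_{k(Y)},\Lambda)$ supported on closed points whose Zariski closures $E_{i,j}$ lie inside $\tau^{-1}(V) = \tau^{-1}(W)\cup \tau^{-1}(S) \cup \tau^{-1}(Y^{\mathrm{sing}})$. Since $Y'$ is smooth and proper over the algebraically closed field $k$, Merkurjev's pairing (cf.\ the argument in \cite[Prop.~3.1]{SchreiederAMS}) evaluates $\tau^*\alpha$ against $\delta_{Y'}$ to recover $\tau^*\alpha$ itself in $H^i(k(Y'),\mathbb{Z}/\ell)$, and against $\tau^*\delta_Y$ (after base change $k(Y) \to k(Y')$) to yield $\deg(\tau) \cdot \tau^*\alpha$; on the other hand, applying the same pairing to the decomposition yields $\sum_{i,j} n_{i,j}\,\Tr_{k(P_{i,j})/k(Y)}\bigl((\tau^*\alpha)|_{E_{i,j}}\bigr)$.

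I would then show every term on the right vanishes: if $E_{i,j} \subset \tau^{-1}(W) \cup \tau^{-1}(Y^{\mathrm{sing}})$, the hypothesis gives $(\tau^*\alpha)|_{E_{i,j}} = 0$ directly; if $E_{i,j}$ maps to one of the finitely many closed points in $S$, that point is $k$-rational (since $k = \bar k$), so the residue field $k(P_{i,j})$ is a finite extension of $k$ and hence equal to $k$, forcing $H^i(k(P_{i,j}),\mathbb{Z}/\ell) = 0$ for $i \geq 1$. Combining, $\deg(\tau) \cdot \tau^*\alpha = 0$, and since $\deg(\tau)$ is invertible mod $\ell$ we obtain $\tau^*\alpha = 0$, contradicting nontriviality. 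The order-$\ell$ variant of the statement is obtained by tracking coefficients mod $\ell^2$ in the same computation. The main technical obstacle I expect is the clean formulation of the pullback of the relative decomposition through the singular alteration $Y'\to Y$ and its compatibility with the Merkurjev pairing — specifically, the identification of closed points of $Y'_{k(Y)}$ with subvarieties of $Y'$ generically finite over $Y$ and the correct bookkeeping of multiplicities after inverting $\deg(\tau)$. This is exactly the setup carried out in the specialization framework of \cite[§3]{LangeSchreieder2024}.
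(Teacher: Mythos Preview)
Your strategy matches the paper's: argue by contradiction, lift to the smooth alteration $Y'$, and contradict nontriviality of $\alpha$ via the Merkurjev pairing. There is, however, a genuine error in your treatment of the contribution from $S$, and the technical obstacle you flag at the end is precisely where the paper does real work rather than something already handled by the framework in \cite[\S3]{LangeSchreieder2024}.

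For the $S$ contribution: you claim that if $E_{i,j}$ maps to a $k$-rational point $s\in S$, then ``the residue field $k(P_{i,j})$ is a finite extension of $k$ and hence equal to $k$.'' This is false. The $P_{i,j}$ are closed points of $Y'_{k(Y)}$, a variety over $k(Y)$, so $k(P_{i,j})$ is a finite extension of $k(Y)$, not of $k$; in particular $H^i(k(P_{i,j}),\bb Z/\ell)$ has no reason to vanish. The correct argument, given in the paper, is that the zero-cycle on $Y'_{k(Y)}$ arising from $S$ is the base change along $k\hookrightarrow k(Y)$ of a zero-cycle on $Y'$ defined over $k$; the Merkurjev pairing on such a base-changed cycle factors through the pairing over $k$, which lands in $H^i(k,\bb Z/\ell)=0$ since $k=\bar k$ and $i\ge 1$. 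This is exactly the point of the Remark following the statement.

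For the pullback: you cannot form $\tau^*\colon \CH_0(V_{k(Y)})\to\CH_0(Y'_{k(Y)})$ directly, since $Y$ is singular along $Y^{\mathrm{sing}}\subset V$ and $\tau$ is not flat. The paper resolves this (following Schreieder, not \cite{LangeSchreieder2024}) by first restricting the assumed decomposition to the smooth open $U=Y\setminus(W\cup Y^{\mathrm{sing}})$, where it reads $\delta_Y|_{U_K}=[s_K]+\ell[\xi_K]$ with $s_K$ the base change of a cycle on $S$; then pulling back along the morphism of \emph{smooth} varieties $\tau|_{U'}\colon U'=\tau^{-1}(U)\to U$ via Fulton's refined pullback; and finally using the localization sequence on $Y'_K$ to write $\delta_{Y'}=[s'_K]+\ell[\xi']+[\tilde z]$ with $\tilde z$ supported on $\tau^{-1}(W\cup Y^{\mathrm{sing}})_K$. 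Only this last term $\tilde z$ requires the vanishing hypothesis on $(\tau^*\alpha)|_E$; the $\ell[\xi']$ term dies because $\alpha$ is $\ell$-torsion, and the $[s'_K]$ term is handled as above.
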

\begin{remark}   It is possible to include $S$ in $W$ and to demand that $\tau^*\alpha$ vanishes along any subvariety of $\tau^{-1}(W)$, however, we will see in the course of the proof that this is always satisfied because of cohomological dimension. Stated this way the proposition is therefore easier to apply. 
\end{remark}

\begin{proof}
    We closely follow the proof of \cite[Prop. 3.1]{SchreiederAMS} and \cite[Thm. 6.1, last paragraph]{LangeSchreieder2024}.

    Let $K=k(Y)$. Assume that $Y$ admits a decomposition of the diagonal relative to $W\cup S\cup Y^{\mathrm{sing}}$. Let $U=Y\setminus (W\cup Y^{\mathrm{sing}})\subset Y$ and $U':=\tau^{-1}(U)$.  Then 
    \begin{equation}\label{equation1}
        \delta_Y|_{U_K}=[s_K]+\ell\cdot[\xi_K]\in \CH_0(U_K),
    \end{equation}
    where $\delta_Y$ denotes the class of the diagonal, $s_K$ is the base change of a zero cycle supported on $S$ and $\xi_K\in\CH_0(U_K)$ reflects that we work with $\bb Z/\ell$-coefficients. 

    Next we consider the commutative diagram
   $$\xymatrix{
U'_K \ar[d]_{\tau|_{U'}} \ar[r]^{j'} & Y_{K}'\ar[d]^\tau\\
U_K \ar[r]_j          & Y_K.}$$ 
We get
    \begin{equation}
        \tau|_{U'}^*\circ j^*(\delta_Y)=[s_K]+\ell\cdot[\xi_K]\in \CH_0(U_K'),
    \end{equation}
where 
\begin{equation}
\begin{split}
\tau|_{U'}^*(j^*[s_K])&= [s_K']\in\CH_0(U_K'), \\
\;\ell\tau|_{U'}^*(j^*\xi)&=\ell [\xi']\in\CH_0(U_K'),
\end{split}
\end{equation}
with $s_K'$ the pullback (which exists for morphisms of smooth varieties by \cite[Sec. 8]{Fulton1998}) of the restriction $j^*[s_K]$ and $\xi'$ the pullback of the restriction $j^*[\xi]$.
Let $\Gamma_\tau\subset Y'\times Y$ be the graph of $\tau$ and $\delta_Y'\in \CH_0(Y_K')$ the zero-cycle given by the generic point of $\Gamma_\tau$. Since $U_K'\to U_K$ is \'etale in a neighborhood of the diagonal point, 
\begin{equation}
     \delta_Y'|_{U_K'}=\tau|_{U'}^*(j^*\delta_Y)\in\CH_0(U_K').
\end{equation}
Therefore the above equation becomes $\delta_Y'|_{U_K'}=[s_K]+\ell\cdot[\xi_K]\in \CH_0(Y_K)$.
By the localisation sequence
$$ \CH_0(Y_K'\setminus U_K')\to \CH_0(Y_K')\to \CH_0(U_K')\to 0$$
we get that
\begin{equation}\label{eq6}
    \delta_Y'=[s_K']+\ell[\xi']+[\tilde{z}]\in\CH_0(U_K'),
\end{equation}
where $\tilde{z}$ is a zero-cycle supported on $Y_K'\setminus U_K'$. 

Next we consider the Merkurjev pairing \cite{Merkurjev2008} (which works for smooth or snc schemes \cite[Sec. 5,6]{Schreiedersurvey}, which is the reason for working with alterations)
$$\CH_0(Y_K')\times H^i_{nr}(k(Y')/k,\bb Z/\ell)\to H^i(K=k(Y),\bb Z/\ell),([z],\beta)\mapsto \langle [z],\beta\rangle,$$
and which, if $z$ is represented by a closed point on $Y_K'$, satisfies $\langle [z],\beta\rangle=f_*(\beta_K|_z),$ where $\beta_K$ is the pullback along $Y'\times Y\to Y$, noticing that $k(Y'\times Y)=K(Y'_K)$, and $f:\Spec k(z)\to \Spec K$ a finite morphism. Indeed, there is a commutative diagram of pairings
\begin{equation*}
\xymatrix@R=2.2em@C=0.45em{
\CH_0(Y') \ar[d] & \times &
H^i_{nr}\!\bigl(k(Y')/k,\bb Z/\ell\bigr) \ar[d] \ar[rr] &
\hspace{2.2em} &
H^i\!\bigl(k,\bb Z/\ell\bigr) \ar[d] \\
\CH_0(Y'_K) \ar[d]^{=} & \times &
H^i_{nr}\!\bigl(k(Y')/k,\bb Z/\ell\bigr) \ar[d] \ar[rr] &
\hspace{2.2em} &
H^i\!\bigl(K,\bb Z/\ell\bigr) \ar[d]^{=} \\
\CH_0(Y'_K) & \times &
H^i_{nr}\!\bigl(K(Y')/K,\bb Z/\ell\bigr) \ar[rr] &
\hspace{2.2em} &
H^i\!\bigl(K,\bb Z/\ell\bigr)
}
\end{equation*}
in which the bottom one in the Merkurjev pairing, inducing the middle one by the above procedure. The top pairing is again the original Merkurjev pairing; the middle one is the one we need.

We pair the class $\tau^*\alpha\in H^i_{nr}(k(Y')/k,\bb Z/\ell)$ with both sides of (\ref{eq6}). Let us first consider the lefthandside. Recall that $\Gamma_\tau\cong Y'$ and that its generic point $\delta_Y'$ has residue field $k(Y')$ and $\Spec k(Y')\to \Spec K$ is induced by $\tau$. Therefore by the definition of the pairing for closed points,
$$\langle\delta'_Y,\tau^*\alpha\rangle=\tau_*\tau^*\alpha=\deg(\tau)\alpha\neq 0\in H^i(K,\bb Z/\ell)$$
is non-zero because $(\deg \tau,\ell)=1$. Next, we consider the righthandside. First note that 
$$\langle \ell\xi',\tau^*\alpha\rangle=\langle \xi',\ell\tau^*\alpha\rangle=\langle \xi',\tau^*(\ell\alpha)\rangle=0$$
because $\alpha$ is $\ell$-torsion. Furthermore, $$\langle s_K',\tau^*\alpha\rangle=0$$
because $s'_K$ is the base change of a zero-cycle $s'$ on $Y'$ and therefore the pairing factors through the restriction of $\tau^*\alpha$ to $z'\in \CH_0(Y')$, i.e. the upper pairing in the above diagram, which vanishes because $H^i_{}(k,\bb Z/\ell)=0$ since $i\geq 1$ and $k$ is algebraically closed.
Finally,
$$\langle \tilde{z},\tau^*\alpha\rangle=0$$
because $\tau^*\alpha$ restricts to zero on the generic points of subvarieties of $\tau^{-1}(W)\cup \tau^{-1}(Y^{\mathrm{sing}})$ and hence pairs to zero with $\tilde{z}$ by assumption. Indeed, there exists a commutative diagram 
\begin{center}
\begin{tikzcd}
	{Z_{\dim Y}(Y\times Y')\times H^i_{nr}(k(Y')/k,\bb Z/\ell)} && {H^i(K,\bb Z/\ell)} \\
	& {Z_0(Y'_K)\times H^i_{nr}(K(Y')/K,\bb Z/\ell)}
	\arrow[from=1-1, to=1-3]
	\arrow[from=1-1, to=2-2]
	\arrow[from=2-2, to=1-3]
\end{tikzcd}
\end{center}
in which the upper horizontal map sends a tuple $(\Gamma,\beta)$ to $0$ if $\Gamma$ does not dominate the first factor and otherwise to $\Gamma^*\beta:=p_*((pr_{Y'}^*\beta)|_\gamma)$, where the first projection induces a finite morphism $p: \kappa(\gamma)\to K$ and $\gamma$ denotes the generic point of $\Gamma$ (see for example \cite[Cor. 5.3]{Schreiedersurvey}). Let us consider this for a closed point $y'\in Y_K'$. Let $y'':=pr_{Y'}(y')$.
Then $y''$ corresponds to a subvariety that under $\tau$ maps to $Y\setminus (W\cup Y^{\mathrm{sing}})$. Let $y:=\tau(y'')$. Now $p_*({(\tau^*\alpha)}|_{y'})=0$ 
since $((\tau^*\alpha)_K)|_{y'}=(\tau^*\alpha)|_{y''}$ as the diagram of pullbacks
$$\xymatrix{
H^i(k(y'),\bb Z/\ell) & H^i(\roi_{Y\times Y',y'},\bb Z/\ell)\ \ar[l]\\
H^i(k(y''),\bb Z/\ell) \ar[u]          & H^i(\roi_{Y',y''},\bb Z/\ell)\ar[u] \ar[l]} $$
commutes and $(\tau^*\alpha)|_{y''}=0$ by assumption.

In sum, we get that
$$\langle s'_K+\ell\xi'+\tilde z,\tau^*\alpha\rangle=0$$
which is a contradiction.
\end{proof}

\begin{example}\label{example_HPT}
    Let $k$ be an algebraically closed field of characteristic different from $2$ and $\Lambda=\bb Z/2$.
    Let
 $$\{Q=0\}\subset \Proj k[y_0,y_1,y_2]\times \Proj k[x_0,x_1,x_2,x_3].$$
 be the singular quadric surface bundle of \cite[Example 8]{HassetPirutkaTschinkel2018} defined by the bidegree $(2,2)$ homogeneous polynomial 
\begin{equation*}
   Q= y_1y_2x_0^2+y_0y_2x_1^2+y_0y_1x_2^2+F(y_0,y_1,y_2)x_3^2=\langle y_1y_2,y_0y_2,y_0y_1,F\rangle,
\end{equation*}
where $$F(y_0,y_1,y_2)=y_0^2+y_1^2+y_2^2-2(y_0y_1+y_0y_2+y_1y_2).$$
\end{example}

\begin{remark}\label{Remark:sing_of_HPT}
  Let the notation be as in Example \ref{example_HPT}. 

    The singularities of $Q$ have been carefully studied in \cite[\S 5.1]{HassetPirutkaTschinkel2018} and are given by the union of the six conics 
\begin{align*}
C_{y_1} &= \left\{\, y_1 = x_1 = x_3 = 0,\; y_2 x_0^{2} + y_0 x_2^{2} = 0 \,\right\}, \\
C_{y_0} &= \left\{\, y_0 = x_0 = x_3 = 0,\; y_2 x_1^{2} + y_1 x_2^{2} = 0 \,\right\}, \\
C_{y_2} &= \left\{\, y_2 = x_2 = x_3 = 0,\; y_0 x_1^{2} + y_1 x_0^{2} = 0 \,\right\}, \\
R_{y_1} &= \left\{\, y_0 - y_2 = y_1 = x_1 = 0,\; x_0^{2} + x_2^{2} - 4 x_3^{2} = 0 \,\right\}, \\
R_{y_0} &= \left\{\, y_1 - y_2 = y_0 = x_0 = 0,\; x_1^{2} + x_2^{2} - 4 x_3^{2} = 0 \,\right\}, \\
R_{y_2} &= \left\{\, y_0 - y_1 = y_2 = x_2 = 0,\; x_0^{2} + x_1^{2} - 4 x_3^{2} = 0 \,\right\}.
\end{align*}
In particular we have $$Q^{\mathrm{sing}}\subset \{y_0y_1y_2=0\}.$$  
\end{remark}

\begin{proposition}\label{proposition_rel_tor_ord_HPT}
    Let the notation be as in Example \ref{example_HPT}.
Let $$W=\{y_0y_1y_2\cdot x_0x_1x_2x_3=0\}.$$
Then $\Tor^\Lambda(Q,W)=2.$
\end{proposition}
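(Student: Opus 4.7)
The plan is to verify $\Tor^\Lambda(Q,W) = 2$ by establishing both inequalities separately. The upper bound $\Tor^\Lambda(Q,W) \le 2$ is automatic: with $\Lambda = \bb Z/2$, every element of $\CH_0(U_{k(Q)}, \Lambda) = \CH_0(U_{k(Q)}) \otimes \bb Z/2$ has order dividing two. For the lower bound $\Tor^\Lambda(Q,W) \ne 1$, I would apply Proposition~\ref{unramifiedobstruction_relativedecomposition} to the nontrivial order-two unramified class $\alpha \in H^2_{nr}(k(Q)/k, \bb Z/2)$ constructed by Hassett--Pirutka--Tschinkel \cite[Prop.~11]{HassetPirutkaTschinkel2018} from the even Clifford algebra of the generic quadric surface fiber of $Q \to \bb P^2$, combined with an odd-degree alteration $\tau\colon Y' \to Q$ (a resolution of singularities in characteristic zero, or a Gabber alteration in arbitrary characteristic $\ne 2$; base change to $\bar k$ via Lemma~\ref{Lemma_LS_3.6}(e) is harmless). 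By Remark~\ref{Remark:sing_of_HPT}, $Q^{\mathrm{sing}} \subset \{y_0 y_1 y_2 = 0\} \subset W$, so the set $V$ in the proposition reduces to $W$ (with $S = \emptyset$), and the core task is to verify $(\tau^*\alpha)|_E = 0 \in H^2(k(E), \bb Z/2)$ for every subvariety $E \subset \tau^{-1}(W)$.

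The verification then splits according to which component of $W$ contains $\tau(E)$. When $\tau(E) \subset \{y_i = 0\}$, I use that $F|_{\{y_i = 0\}} = (y_j - y_k)^2$ for $\{i,j,k\} = \{0,1,2\}$ is a square, so the discriminant double cover $\tilde{\bb P}^2 \to \bb P^2$ branched over $\{F = 0\}$ splits along $\{y_i = 0\}$ into two disjoint copies of $\bb P^1$. Since $\alpha$ originates as a class in the Brauer group of $\tilde{\bb P}^2$, the restriction $(\tau^*\alpha)|_E$ factors through $H^2(k(\bb P^1), \bb Z/2) = 0$ (Tsen over $\bar k$) and hence vanishes. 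When $\tau(E) \subset \{x_j = 0\}$, the intersection $D_j := Q \cap \{x_j = 0\}$ is a bidegree $(2,2)$ conic bundle over $\bb P^2$; by the sub-form identity $c(\langle a_1,\dots,a_n\rangle) = c(\langle a_1,\dots,a_{n-1}\rangle) + (a_1\cdots a_{n-1}, a_n)$ for Clifford invariants, $\alpha$ differs from the Brauer class $c(D_j)$ of the ternary sub-form by a correction term $(a_1\cdots a_{n-1}, a_j)$, where $a_j$ is the removed $x_j^2$-coefficient. The key identity $F = (y_0 - y_1 - y_2)^2 - 4 y_1 y_2$ (and its symmetric variants in the $y_i$) together with the Steinberg relation $(a, 1 - a) = 0$ applied to $a = 4 y_i y_{i'} /(y_{i''} - y_i - y_{i'})^2$ shows that this correction vanishes in $\op{Br}(k(\bb P^2))[2]$. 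Hence $\alpha|_{D_j}$ equals $c(D_j)|_{D_j}$, which is zero because $D_j$ is the total space of its own conic bundle over $\bb P^2$ and thus carries a tautological section over $k(D_j)$.

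The main technical obstacle is in Case A: one must make rigorous the claim that HPT's construction of $\alpha$ on $Q$ realizes the restriction to $E$ as a pullback from the $\bb P^1$-split piece of $\tilde{\bb P}^2$ above $\{y_i = 0\}$. Concretely, this amounts to reinterpreting HPT's residue vanishing computation geometrically through the discriminant cover, keeping track of how the rulings of the rank-two fibers over $\{y_i = 0\}$ provide a section of $\tilde{\bb P}^2 \to \bb P^2$ over each component of $E$'s image. Once this is done, Cases A and B together verify the hypothesis of Proposition~\ref{unramifiedobstruction_relativedecomposition} with $\ell = 2$, yielding $2 \,|\, \Tor^{\bb Z/2}(Q, W)$ and hence, combined with the upper bound, equality.
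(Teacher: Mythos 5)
Your overall strategy (upper bound automatic for $\Lambda=\bb Z/2$, lower bound via Proposition~\ref{unramifiedobstruction_relativedecomposition} applied to the Hassett--Pirutka--Tschinkel class and an odd-degree alteration) matches the paper, and your Case B is essentially the paper's argument in different clothing: your Clifford-invariant computation with the Steinberg relation applied to $F=(y_0-y_1-y_2)^2-4y_1y_2$ is equivalent to Lemma~\ref{Lemma_subform_of_Pfister}, which shows that each ternary form $L_i$ is similar to a subform of the Pfister form $\langle\langle y_2/y_0,y_1/y_0\rangle\rangle$ and then invokes \cite[Thm.~2.3]{SchreiederAMS}. However, there is a genuine gap, and it is exactly where you flag it. Your case division is by irreducible component of $W$, whereas the paper's is by whether $E$ dominates $\bb P^2$ under the quadric bundle projection $f$. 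This matters because the class $\alpha=(y_1/y_0,y_2/y_0)$ is \emph{ramified} on the base along the lines $\{y_i=0\}$ (and along $\{F=0\}$); only its pullback to $Q$ is unramified. Consequently, for a subvariety $E$ with $f(\tau(E))\subsetneq\bb P^2$ there is no naive restriction of $\alpha$ through the base, and the vanishing of $(\tau^*\alpha)|_E$ is a nontrivial theorem --- Schreieder's vanishing theorem \cite[Thm.~5.1, Thm.~9.2]{SchreiederAMS} --- which the paper cites for precisely this case. Your Case A attempts to substitute for this by asserting that $\alpha$ ``originates as a class in the Brauer group of $\tilde{\bb P}^2$'' and descends to the split $\bb P^1$'s over $\{y_i=0\}$; this is not how the class is constructed (it is a quaternion algebra on $\bb P^2$ itself), the identification with an even Clifford algebra class on the discriminant cover would itself need proof, and you explicitly acknowledge that the key step is unverified. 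As written, Case A is not a proof.

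A second, smaller gap: your Case B as stated must also cover subvarieties $E$ with $\tau(E)\subset\{x_j=0\}$ that do \emph{not} dominate $\bb P^2$. There the factorization of $(\tau^*\alpha)|_E$ through $H^2_{nr}(k(D_j),\bb Z/2)$ requires that the restriction maps exist and are compatible (condition $(\diamond)$ in Merkurjev's sense, smoothness of $D_j$ at the generic point of $\tau(E)$, etc.); the paper checks this carefully via \cite[Prop.~2.13]{Merkurjev2008} and \cite[Cor.~12.4]{Ro96}, and crucially only needs it when $\tau(E)$ dominates $\bb P^2$, since then $\tau(E)\not\subset Q^{\mathrm{sing}}$ by Remark~\ref{Remark:sing_of_HPT}. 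The clean fix is to adopt the paper's dichotomy: handle all non-dominant $E$ (which includes your entire Case A and the non-dominant part of your Case B) by citing Schreieder's vanishing theorem, and reserve the subform/Clifford computation for the dominant case, where $\tau(E)$ is forced to lie in some $D_j$ and to avoid $Q^{\mathrm{sing}}$.
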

\begin{proof}
    Consider the projection $f:Q\to \bb P^2_k$, $K=k(\frac{y_1}{y_0},\frac{y_2}{y_0})=k(\bb P^2_k)$ and the class
    $$\alpha=(\frac{y_1}{y_0},\frac{y_2}{y_0})\in Br(K)[2]=H^2_{}(K,\bb Z/2).$$
     By \cite[Prop. 10]{HassetPirutkaTschinkel2018}, $0\neq f^*\alpha\in H^2_{nr}(k(Q)/k,\bb Z/2)$ is unramified and nontrivial. Indeed, let   
     $$\psi:=\langle\langle \frac{y_1}{y_0},\frac{y_2}{y_0}\rangle\rangle=\langle 1,-\frac{y_1}{y_0},-\frac{y_2}{y_0},\frac{y_1}{y_0}\frac{y_2}{y_0}\rangle.$$
    Write $\beta:=f^*\alpha$.
    We want to apply Proposition \ref{unramifiedobstruction_relativedecomposition} using the class $\beta$ so let $\tau:Q'\to Q$ be an alteration of degree different from $2$. In order to conclude $\Tor(Q,W)>1$ we need to verify that for any subvarieties $E\subset Q'$ with $\tau(E)\subset W$ the restriction $(\tau^*\beta)|_E=0\in H^i(k(E)/k,\bb Z/2)$ vanishes. We distinguish the following two cases by considering the image $\tau (E)$ via the projection $f$ and treat them separately.
     \begin{caselist}
         \item Let $\overline{f\tau (E)}\neq\bb P^2_k$. \\
         Since $E$ does not dominate $\bb P^2_k$ and $Q'$ is smooth, the vanishing $(\tau^*\beta)|_E$ follows from \cite[Thm. 5.1 \text{ or }Thm. 9.2]{SchreiederAMS} (where the argument is carried out in much greater generality). 
         \item Let $\overline{f\tau (E)}=\bb P^2_k$. \\
    First note that the variety $Z=\tau(E)$ is not contained in $Q^{\mathrm{sing}}$ as it dominates $\bb P^2_k$ via $f$ and $Q^{\mathrm{sing}}$ does not dominate $\bb P^2_k$ via $f$ by Remark \ref{Remark:sing_of_HPT}. Hence we can consider the restriction $\_|_Z:{ H^2_{nr}(k(Q)/k,\bb Z/2)}\to { H^2_{nr}(k(Z)/k,\bb Z/2)}$ \cite[Thm 4.1.1]{CTsurvey95} (see also \cite[Sec. 2.5]{Merkurjev2008}, in the language of which the morphisms $Z\to Q$, $E\to Q'$ and $E\to Z$ satisfy condition $(\diamond)$) 
    and by \cite[Prop. 2.13]{Merkurjev2008} fits in the commutative diagram 
    \begin{center}
\begin{tikzcd}
	{ H^2_{nr}(k(Q')/k,\bb Z/2)} & { H^2_{nr}(k(E)/k,\bb Z/2)} \\
	{ H^2_{nr}(k(Q)/k,\bb Z/2)} & { H^2_{nr}(k(Z)/k,\bb Z/2).}
	\arrow["{\_|_E}", from=1-1, to=1-2]
	\arrow["{\tau^*}", from=2-1, to=1-1]
	\arrow["{\_|_Z}", from=2-1, to=2-2]
	\arrow["{{\tau|_E}^*}"', from=2-2, to=1-2]
\end{tikzcd}
    \end{center}
        Since $Z$ is irreducible, it must be contained in a horizontal irreducible component $D_i:=\{x_i=0\}\cap Q\subset W$. As $Z$ is not contained in the singular locus of $({D_i})^{\mathrm{sing}}=Q^{\mathrm{sing}}\cap D_i$ (or note that $D_i\to \Proj k[y_0,y_1,y_2]$ is generically smooth and therefore $Z\to D_i$ satisfies condition $(\diamond)$) we can consider the restriction $\_|_Z':H^2_{nr}(k(D_i)/k,\bb Z/2)\to H^2_{nr}(k(Z)/k,\bb Z/2)$ and by \cite[Cor. 12.4]{Ro96} the restriction $\_|_Z$ factors over it (through the restriction $\_|_{D_i}:H^2_{nr}(k(Q)/k,\bb Z/2)\to H^2_{nr}(k(D_i)/k,\bb Z/2)$, which exists as $D_i\to Q$ satisfies $(\diamond)$).  The divisors $D_i$ are conic bundles over $\Proj k[y_0,y_1,y_2]$ defined by $\{L_i=0\}$, for $L_i:=Q|_{x_i=0}\in k(\frac{y_1}{y_0},\frac{y_2}{y_0})[x_0,...,\hat x_i,...,x_3]$. By \cite[Thm. 2.3]{SchreiederAMS} to conclude the proof it is enough to show that $L_i$ is similar to a subform of the Pfister form $\langle\langle \frac{y_2}{y_0},\frac{y_1}{y_0}\rangle\rangle$. This is shown in Lemma \ref{Lemma_subform_of_Pfister}.
     \end{caselist}
\end{proof}
For the following observation see also the beginning of \cite[Sec. 6]{SchreiederAMS}. Denote $f:=\frac{F(y_0,y_1,y_2)}{y_0^2}$.
\begin{lemma}\label{Lemma_subform_of_Pfister}
    Over the field $K:=k(\frac{y_1}{y_0},\frac{y_2}{y_0})=k(\bb P^2)$ the forms  
    \begin{align*}
    L_0=\langle \frac{y_2}{y_0},\frac{y_1}{y_0},f\rangle,\,\quad L_1=\langle \frac{y_1}{y_0},\frac{y_1}{y_0}\frac{y_2}{y_0},f\rangle,\quad L_2=\langle \frac{y_2}{y_0},\frac{y_1}{y_0}\frac{y_2}{y_0},f\rangle,\quad  L_3=\langle \frac{y_1}{y_0},\frac{y_2}{y_0},\frac{y_1}{y_0}\frac{y_2}{y_0}\rangle
    \end{align*}
    are subforms of the Pfister form $P=\langle\langle \frac{y_2}{y_0},\frac{y_1}{y_0}\rangle\rangle$.
\end{lemma}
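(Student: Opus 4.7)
My plan is to exhibit, for each $i \in \{0,1,2,3\}$, a scalar $c_i \in K^{\times}$ such that $L_i \perp \langle c_i \rangle \cong P$; since a $3$-dimensional form is a subform of a $4$-dimensional form exactly when such an orthogonal complement exists, this is equivalent to what we need. Writing $u := y_1/y_0$ and $v := y_2/y_0$, we have $\det P \equiv 1 \pmod{(K^{\times})^2}$, which forces $c_i \equiv \det L_i$ modulo squares. This gives $c_0 = uvf$, $c_1 = vf$, $c_2 = uf$, and $c_3 = 1$.

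The key step is the following trio of elementary identities for $f = 1 + u^2 + v^2 - 2u - 2v - 2uv$:
\begin{equation*}
f = (1 - u - v)^2 - 4uv \;=\; (u - 1 - v)^2 - 4v \;=\; (v - 1 - u)^2 - 4u,
\end{equation*}
each verified by direct expansion (the first is already recorded before the lemma; the other two come from completing the square in $u$ and in $v$, respectively). Since $k$ is algebraically closed of characteristic different from $2$, both $-1$ and $-4$ are squares in $K$. Hence each identity expresses $f$ in the form $x^2 + \varepsilon y^2$ with $\varepsilon \in \{uv,\,v,\,u\}$, so $f$ is represented by each of the binary forms $\langle 1, uv \rangle$, $\langle 1, v \rangle$, and $\langle 1, u \rangle$ over $K$.

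I then invoke the standard fact that if a binary form $\langle 1, d \rangle$ represents a nonzero element $a \in K^{\times}$, then $a \langle 1, d \rangle \cong \langle 1, d \rangle$ (both $\langle 1, d \rangle$ and $\langle a, ad \rangle = a\langle 1, d \rangle$ have discriminant $d$ modulo squares and share the value $a$, so are isometric by the classification of binary forms). Applied to $a = f$ three times, this yields
\begin{equation*}
f \langle 1, u \rangle \cong \langle 1, u \rangle, \qquad f \langle 1, v \rangle \cong \langle 1, v \rangle, \qquad f \langle 1, uv \rangle \cong \langle 1, uv \rangle.
\end{equation*}
Moreover, since $-1 \in (K^{\times})^2$, we have $\langle a \rangle \cong \langle -a \rangle$ for every $a \in K^{\times}$, so in particular $\langle 1, u, v, uv \rangle \cong \langle 1, -u, -v, uv \rangle = P$.

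The four cases are now one-line verifications:
\begin{equation*}
L_0 \perp \langle uvf \rangle \,=\, \langle u, v \rangle \perp f \langle 1, uv \rangle \,\cong\, \langle u, v \rangle \perp \langle 1, uv \rangle \,=\, \langle 1, u, v, uv \rangle \,\cong\, P,
\end{equation*}
\begin{equation*}
L_1 \perp \langle vf \rangle \,=\, u\langle 1, v \rangle \perp f \langle 1, v \rangle \,\cong\, u\langle 1, v \rangle \perp \langle 1, v \rangle \,=\, \langle 1, u, v, uv \rangle \,\cong\, P,
\end{equation*}
the case $L_2$ is obtained from the case $L_1$ by the $u \leftrightarrow v$ symmetry of the setup, and $L_3 \perp \langle 1 \rangle = \langle 1, u, v, uv \rangle \cong P$ is immediate. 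The only real obstacle is finding the three symmetric completions of the square that realize $f$ as a norm simultaneously from $K(\sqrt{u})$, $K(\sqrt{v})$, and $K(\sqrt{uv})$; once these are spotted, the rest is routine bookkeeping with binary forms and the consequences of $-1$ being a square.
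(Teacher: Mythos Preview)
Your proof is correct and follows essentially the same route as the paper's: both arguments reduce the problem (after disposing of $L_3$ and using the $u\leftrightarrow v$ symmetry) to showing that $f$ is represented by the binary forms $\langle 1,uv\rangle$ and $\langle 1,v\rangle$, and both establish this via the same completing-the-square identities $f=(1-u-v)^2-4uv$ and $f=(u-1-v)^2-4v$. Your packaging via the orthogonal complement $\langle c_i\rangle$ determined by the discriminant, together with the isometry $f\langle 1,d\rangle\cong\langle 1,d\rangle$, is a slightly more streamlined way of saying exactly what the paper says when it invokes the representation criterion to embed $\langle f\rangle$ into the complementary binary subform of $P$.
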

\begin{proof}
First note that since $-1$ is a square in $K$ ($k$ is algebraically closed), we may ignore signs, which makes the case of $L_3$ trivial. By symmetry it is now enough to show that $L_0$ and $L_1$ are subforms of $P$. Denote $u=\frac{y_2}{y_0},v=\frac{y_1}{y_0}$, and $S=\langle u,v\rangle$, $S'=\langle vu,v\rangle$. $S,S'$ are subforms of $P$ and
$$L_0=S\oplus \langle f\rangle \text{ and } L_1=S'\oplus \langle f\rangle.$$ 
It therefore suffices to show that $\langle f\rangle$ is a subform of  $\langle 1,\frac{y_1}{y_0}\frac{y_2}{y_0}\rangle$ resp. $\langle 1,\frac{y_2}{y_0}\rangle$. 
By the representation criterion \cite[I.2.3]{Lam2005quadraticForms} to show this it suffices to find $(a,b),(a',b')\in K^2-\{(0,0)\}$ such that 
    \begin{equation*}
        a^2+uvb^2=f\,\,\,\,\text{and}\,\,\,a'^2+ub'^2=f.
    \end{equation*}

Write $i$ for a root of $-1$ and set $a=u+v-1,\,\,b=b'=2i,\,\,a'=u-v+1$. Then 
    \begin{equation*}
        a^2+uvb^2=(u+v-1)^2-4uv=f\,\,\,\,\text{and}\,\,\,\,a'^2+ub'^2=(u-v+1)^2-4u=f.
    \end{equation*}
\end{proof}

\begin{remark}\label{rem:HPT_unirational}
    It follows from Proposition \ref{proposition_rel_tor_ord_HPT} that $2\mid\Tor (Q)$. By an application of \cite[Lemma 14]{Schreieder2019quadricbundles}, which gives a unirational parametrisation of $Q$ of degree two, and \cite[Cor. 7.10]{Schreiedersurvey} we conclude that $2=\Tor (Q)$.
\end{remark}

\subsection{Very general bidegree hypersurfaces}
Let $k$ be a field and $\Lambda $ a ring. 
\begin{definition}\cite[Definition 2.11]{LangeZhang}\label{Def:very_general}
Let $\pi:\cal X\to B$ be a proper flat morphism of $k$-schemes with $B$ geometrically integral. A closed point $b\in B$ is very general with respect to $\pi$ if there exist field extensions $K/\kappa(b)$ and $L/k(B)$ and a field isomorphism $K\to L$ that induces an isomorphism of $\bb Z$-schemes
\begin{equation*}
    \cal X_b\times_{\kappa(b)}K\cong \cal X\times_{k(B)}L.
\end{equation*}
\end{definition}
We are interested in the case where $B$ is the parameter space of bidegree $(d,f)$ hypersurfaces in $\bb P^{n-r}_k\times \bb P^{r+1}_k$ with $n\geq r+2\geq 3$, hence
\begin{equation*}
    B=\bb P(H^0(\bb P^{n-r}_k,\cal O_{\bb P^{n-r}_k}(d))\otimes H^0(\bb P^{r+1}_k,\cal O_{\bb P^{r+1}_k}(f)))
\end{equation*}
and $\pi:\cal X\to B$ is the restriction of the projection $p_1:B\times \bb P^{n-r}_k\times \bb P^{r+1}_k\to B$ to the universal family of bidegree $(d,f)$ hypersurfaces. For a very general $b\in B$ we will call the fibre $\pi^{-1}(b)$ a very general bidegree hypersurface. Write $N:=\binom{n-r+d}{d}\binom{r+1+f}{f}-1$ for the dimension of $B$ and denote the coordinates on $B$ by $(t_0:...:t_N)$. 
The following gives concrete examples of very general hypersurfaces.
\begin{lemma}\label{Lemma:very_general_examples}
Let $k$ be an uncountable algebraically closed field with prime field $k_0$, then for every point $a=(a_0:...:a_N)\in B(k)$ such that 
\begin{equation}\label{very_transcendental}
P(a)\neq 0\,\,\text{for every homogeneous polynomial}\,\,P\in k_0[t_0,...,t_N]-\{0\}
\end{equation}
the associated hypersurface $\cal X_a\subset \bb P^{n-r}\times \bb P^{r+1}$ is very general. Moreover, for any points $a,b\in B(k)$ that satisfy the above condition (\ref{very_transcendental}) and any closed subscheme $W\subset \bb P^{n-r}\times \bb P^{r+1}$ that is defined over $k_0$, we have $$\Tor^\Lambda(\cal X_a, W\cap \cal X_a)=\Tor^\Lambda(\cal X_b, W\cap \cal X_b).$$ 
\end{lemma}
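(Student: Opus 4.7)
The plan is to deduce both assertions from a single Steinitz-type field-theoretic construction. First I would reinterpret condition (\ref{very_transcendental}): after reordering so that $a_0\neq 0$, this condition is equivalent to the ratios $a_1/a_0,\ldots,a_N/a_0\in k$ being algebraically independent over $k_0$. Indeed, any polynomial relation in the ratios homogenizes (by multiplying through by a sufficiently high power of $t_0$) to a nonzero homogeneous polynomial in $k_0[t_0,\ldots,t_N]$ vanishing at $a$; conversely, any homogeneous relation dehomogenizes (after factoring out all powers of $t_0$ that divide it) to an algebraic relation in the ratios.

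The key intermediate lemma I would prove is: given two $N$-tuples $(u_i)_{i=1}^N$ and $(v_i)_{i=1}^N$ in $k$, each algebraically independent over $k_0$, there exists a $k_0$-algebra automorphism $\psi\colon k \to k$ with $\psi(u_i) = v_i$. To establish this, extend each tuple to a transcendence basis of $k/k_0$; since $k_0$ is countable and $k$ is uncountable, $\operatorname{tr.deg}(k/k_0)=|k|$ is infinite, so both transcendence bases are equinumerous and one may choose a bijection matching $u_i\leftrightarrow v_i$. This produces a $k_0$-algebra isomorphism between the two purely transcendental subfields, which extends (by Steinitz) to an automorphism of their common algebraic closure, $k$ itself.

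For part (i), I would apply the same construction with the target copy of $k$ replaced by $L:=\overline{k(B)}$: the elements $t_1/t_0,\ldots,t_N/t_0$ form a transcendence basis of $k(B)/k$ and, together with a transcendence basis of $k/k_0$, a transcendence basis of $L/k_0$ of cardinality $|k|$. Matching $a_i/a_0\leftrightarrow t_i/t_0$ yields a $k_0$-algebra isomorphism $\phi\colon K:=k\to L$. Under $\phi$, the defining bihomogeneous polynomial of $\cal X_a$ is transported to a nonzero scalar multiple of the defining polynomial of the generic fiber $\cal X_\eta$, so $\phi$ induces the required isomorphism of $\bb Z$-schemes $\cal X_a\times_k K\cong \cal X\times_{k(B)}L$.

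For part (ii), I would apply the key lemma with $u_i=a_i/a_0$ and $v_i=b_i/b_0$ to obtain a $k_0$-automorphism $\psi$ of $k$. This induces a $\bb Z$-scheme automorphism of $\bb P^{n-r}_k\times \bb P^{r+1}_k$ (covering $\Spec\psi$) that sends $\cal X_a$ to $\cal X_b$ and preserves $W$ (as $W$ is defined over $k_0$), hence restricts to an isomorphism of pairs $(\cal X_a,W\cap \cal X_a)\cong(\cal X_b,W\cap \cal X_b)$ of $\bb Z$-schemes. Since $\Tor^\Lambda(-,-)$ depends only on the Chow-theoretic data of such a pair, the two relative torsion orders must coincide. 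The hard part will be executing the Steinitz step cleanly — the uncountability hypothesis is used essentially there to match cardinalities (the argument fails for $k=\overline{\bb Q}$) — together with the bookkeeping to ensure that the produced scheme isomorphisms are genuinely isomorphisms of $\bb Z$-schemes rather than $k$-schemes (which they are not in general, since $\psi$ need not be the identity on $k$).
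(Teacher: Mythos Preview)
Your proposal is correct and follows essentially the same Steinitz-type argument as the paper: for part (i), both you and the paper build a $k_0$-isomorphism between $k$ and $\overline{k(B)}$ matching $a_i/a_0$ with $t_i/t_0$ by equating transcendence degrees. For part (ii) there is only a cosmetic difference: you construct a $k_0$-automorphism $\psi$ of $k$ directly, whereas the paper routes both $a$ and $b$ through $\cal X_{\overline{k(B)}}$ via the maps $\phi_a,\phi_b$ from part (i) and then invokes invariance of $\Tor^\Lambda$ under extensions of an algebraically closed field (Lemma~\ref{Lemma_LS_3.6}(e)) to descend; your $\psi$ is precisely the composite $\phi_b\circ\phi_a^{-1}$, so the two arguments are equivalent, yours being slightly more direct in that it avoids the extra appeal to Lemma~\ref{Lemma_LS_3.6}(e).
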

\begin{proof}
    For the first claim we follow the proof of \cite[Lemma 2.1]{Vial2013}. Denote the quotient field of $B$ by $k(B)=k(u_1,...,u_N)$ with $u_i=t_i/t_0$, then as $\overline{k(B)}$ and $k$ are both algebraically closed fields of the same transcendence degree, there exists a field isomorphism $\phi_a:\overline{k(B)}\cong k$ which sends the tuple $(u_1,...,u_N)$ to the tuple $(a_1/a_0,...,a_N/a_0)$ (here assumption $(\ref{very_transcendental})$ is used to ensure that $(a_1/a_0,...,a_N/a_0)$ is algebraically independent over $k_0$). It follows by construction that the base change $(\cal X_{\overline{k(B)}})_k$ along $\phi_a$ defines the same $k$-scheme as $\cal X _a$. Hence $\cal X _a$ is very general with respect to $\pi:\cal X\to B$ (with $K=k$ and $L=\overline{k(B)}$).  To prove the second claim note that the isomorphisms $\phi_a^{-1}$,$\phi_b^{-1}$ preserve the prime field $k_0$ hence 
    $$\Tor^\Lambda({\cal X_a}\times{\overline{k(B)}},( W\cap {\cal X_a})\times{\overline{k(B)}})=\Tor^\Lambda({\cal X_b}\times{\overline{k(B)}},(W\cap {\cal X_b})\times{\overline{k(B)}}).$$ 
   Indeed, there is an isomorphism (given by pushforward and pullback) \[\CH_0(\cal X_a\times\overline{k(B)}_{k(\cal X_a\times\overline{k(B)})})\cong \CH_0(\cal X_b\times\overline{k(B)}_{k(\cal X_b\times\overline{k(B)})})\] which preserves $W$ and the diagonal since they are defined over the prime field. 
    Finally, we conclude by Lemma \ref{Lemma_LS_3.6}(e).
\end{proof}
\begin{remark}
    Note that the locus of points in $B(k)$ which parametrize singular fibers is defined over the prime field $k_0\subset k$. To see this let $B',\cal X'$ be $k_0$-varieties and $f':\cal X'\to B'$ a $k_0$-morphism such that $B'_k=B$, $\cal X'_k=\cal X$ and $f'_k=f$.
The locus of points in $B$ which parametrize singular varieties is the schematic image under $f$ of the closed subscheme
\begin{equation*}
    S=\{(x,P)\,|\,P(x)=0\,\,\text{and}\,\,\mathrm(Jac \,P)_x\,\text{does not have full rank}\}
\end{equation*}
which is defined over $\bb Z$. As $S$ is clearly defined over $k_0$, we see by \cite[\href{https://stacks.math.columbia.edu/tag/0H4M}{Tag 081I}]{stacks-project} that $f(S)$ is defined over $k_0$. We conclude that any bidegree hypersurface which satisfies condition (\ref{very_transcendental}) is smooth.
\end{remark}

The key property of very general hypersurfaces is their flexibility in degeneration arguments.  
\begin{lemma}\label{Lemma:very_general_relative_tor}
   Let $k$ be an uncountable algebraically closed field with prime field $k_0$. Assume that there exists an integral bidegree $(d,f)$ hypersurface $X$ and a closed subscheme $W\subset \bb P^{n-r}_k\times \bb P^{r+1}_k$ that is defined over $k_0$, satisfies $\dim W\leq n-1$ and for which $\Tor^\Lambda(X, W\cap X)>1$.  Then for any $b\in B(k)$ that satisfies condition (\ref{very_transcendental}) we have $\Tor^\Lambda(\cal X_b,\cal X_b\cap W)>1$.
\end{lemma}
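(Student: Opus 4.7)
The plan is to argue by contradiction using a spreading-out and specialization technique. I assume $\Tor^\Lambda(\cal X_b,\cal X_b\cap W)=1$ for some $b\in B(k)$ satisfying (\ref{very_transcendental}) and show that this forces $\Tor^\Lambda(X,W\cap X)=1$, contradicting the hypothesis.

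First I would spread the relative decomposition of the diagonal of $\cal X_b$ across an open neighbourhood of $b$ in $B$. The equality $\Tor^\Lambda(\cal X_b,W\cap \cal X_b)=1$ is witnessed by finitely many cycles on $\cal X_b\times V_b$ for some open $V_b\subseteq \cal X_b$. Since $\pi:\cal X\to B$ is flat and proper and $W$ is defined over the prime field $k_0$, it spreads to a horizontal closed subscheme $\mathcal W:=\cal X\cap (B\times W)\subseteq \cal X$ with $\mathcal W\cap \cal X_b=W\cap \cal X_b$. The cycles witnessing the decomposition on $\cal X_b$ extend, by standard spreading-out, to cycles on $\cal X\times_B\cal V$ over an open neighbourhood $U\subseteq B$ of $b$ (where $\cal V:=\cal X\setminus \mathcal W$), giving a fibrewise $\Lambda$-decomposition of the diagonal modulo $W$ over all of $U$.

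Next I would connect the closed point $a\in B(k)$ (for which $\cal X_a\cong X$) to $U$ via a DVR. Choose a line $L\subseteq B\cong \bb P^N$ joining $a$ and $b$; then $L\cap U$ is a nonempty open of $L$ containing $b$, so its generic point $\eta_L$ lies in $U$. The local ring $R=\roi_{L,a}$ is a DVR with fraction field $k(L)$ and residue field $k$, and the base change $\cal X_R\to \Spec R$ is a flat proper family whose generic fibre $\cal X_{\eta_L}$ inherits a $\Lambda$-decomposition modulo $W\cap \cal X_{\eta_L}$ from the spread-out, and whose special fibre is $X$. The horizontal pullback $\mathcal W_R\subseteq \cal X_R$ restricts to $W\cap X$ on the special fibre, and the bound $\dim W\le n-1<\dim X$ guarantees that $\mathcal W_R$ does not contain the special fibre.

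Finally, I would apply the standard specialization of $0$-cycles for flat proper families over a DVR (Fulton \cite[\S 20]{Fulton1998}) to $\delta_{\cal X_R}$ and to the spread-out cycles: the relative decomposition on the generic fibre specializes to a $\Lambda$-decomposition of $\delta_X$ modulo $W\cap X$, yielding $\Tor^\Lambda(X,W\cap X)=1$ and the contradiction. The main technical obstacle is the spreading-out step: one must verify that a finite set of cycles witnessing the relative decomposition on a single fibre really extends to a family-wide decomposition over an open neighbourhood. The hypotheses that $\pi$ is flat and proper with integral fibres, that $W$ is defined over $k_0$ (so that it spreads horizontally), and that $\dim W\le n-1$ (so $\mathcal W$ cannot swallow any fibre) are precisely what make this spreading-out work; once it is in place, the DVR specialization is formal.
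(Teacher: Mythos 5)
The skeleton of your argument---join $a$ and $b$ by a line $L$, form the DVR $R=\roi_{L,a}$ with special fibre $X$, and specialise a relative decomposition from the generic fibre of $\cal X_R$ down to $X$---is exactly the paper's, and that last specialisation step is fine (the paper invokes \cite[Lem.~3.8]{LangeSchreieder2024} for the relative torsion order rather than raw Fulton specialisation, but it is the same idea). The gap is in how you put a decomposition on the generic fibre $\cal X_{\eta_L}$ in the first place. Spreading out extends data given over a \emph{generic} point to an open neighbourhood of that point; it does not extend cycles and rational equivalences living on a single closed fibre $\cal X_b\times_k\cal X_b$ to the nearby fibres of $\cal X\times_B\cal X\to B$. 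There is no canonical flat extension of a subvariety of a closed fibre over a neighbourhood of $b$, and if such a step were legitimate your argument would prove the implication for an \emph{arbitrary} closed point $b$, which is false: special fibres can acquire decompositions that the nearby fibres lack (the cone over an elliptic curve in the paper's remark after Lemma \ref{Lemma:very_general_decomposition} is an example of a degenerate fibre with a decomposition of the diagonal). Note that your proof never actually uses condition (\ref{very_transcendental}); that should be a warning sign.

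What the paper does instead is the field-isomorphism trick that condition (\ref{very_transcendental}) enables: writing the generic fibre of the family over $L$ as $\{M_0+\sum_i v_iM_i=0\}$ with $v_i=\frac{a_i+ub_i}{a_0+ub_0}\in k(L)$, both the tuple $(v_i)$ and the coefficient tuple $(b_i/b_0)$ of $\cal X_b$ are algebraically independent families over $k_0$, so there is an isomorphism of algebraically closed fields over $k_0$ identifying $\cal X_{\overline{k(L)}}$ with a base change of $\cal X_b$ and preserving $W$ (which is defined over $k_0$). Hence $\Tor^\Lambda(\cal X_b,W\cap\cal X_b)=\Tor^\Lambda(\cal X_{\overline{k(L)}},W_{\overline{k(L)}}\cap\cal X_{\overline{k(L)}})$ by Lemma \ref{Lemma_LS_3.6}(e), and the DVR specialisation then gives the divisibility $\Tor^\Lambda(X,W\cap X)\mid\Tor^\Lambda(\cal X_b,W\cap\cal X_b)$ directly, with no contradiction argument needed. (One could in principle repair your route with the Voisin-style argument that the locus of fibres admitting a relative decomposition is a countable union of closed subsets defined over $\overline{k_0}$, which (\ref{very_transcendental}) forces to be either disjoint from $b$ or all of $B$; but that is a genuinely different and longer argument than ``standard spreading-out'', and you would still need the specialisation input to prove closedness of those loci.)
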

\begin{proof}
By Lemma \ref{Lemma:very_general_examples} it is enough to find one such point for which the claim holds, so let $b\in B(k)$ be a point that satisfies condition (\ref{very_transcendental}) and is different from the point $a\in B(k)$ associated to $X$.  Let $L\subset B$ be the line that passes through both $b$ and $a$. Let $R=\cal O_{L,a}$ be the local ring at $a$, then the special fiber of $\cal X_R$ is $\cal X_a$. Write $k(L)=k(u)$ and let $\{M_i(y,x)\}$ be the family of bidegree $(d,f)$ monomials, then we can identify the fiber 
\begin{equation*}
    \pi^{-1}(\eta_L)=\{\sum_{i=0}^{N}(a_i+ub_i)M_i(x,y)=0\}\subset \bb P^{n-r}_{k(u)}\times \bb P^{r+1}_{k(u)}
\end{equation*}
As $b_0\neq 0$ we can write $\pi^{-1}(\eta_L)=\{M_0+\sum_{i=1}^{N}v_iM_i(x,y)=0\}$ for $v_i=\frac{a_i+ub_i}{a_0+ub_0}\in k(u)$. Now write $\lambda_i:=\frac{b_i}{b_0}$, as both $\{\lambda_i\}$ and $\{v_i\}$ form algebraically independent families over $k_0$, it follows as in the proof of Lemma \ref{Lemma:very_general_examples} that $$\Tor^\Lambda(\cal X_b, W\cap \cal X_b)=\Tor^\Lambda(\cal X_{\overline{k(L)}}, W_{\overline{k(L)}}\cap \cal X_{\overline{k(L)}})$$
and we can conclude by \cite[Lem. 3.8]{LangeSchreieder2024} that $1<\Tor^\Lambda(\cal X_a, W\cap \cal X_a)\,|\,\Tor^\Lambda(\cal X_b, W\cap \cal X_b)$.
\end{proof}
Finally, we have the following more general statement. 
\begin{lemma}\label{Lemma:very_general_decomposition}
    Let $k$ be an uncountable algebraically closed field. Assume that there exists an integral bidegree $(d,f)$ hypersurface $X$ and a nonempty closed subscheme $W\subset \bb P^{n-r}_k\times \bb P^{r+1}_k$ for which $\Tor^\Lambda(X, W\cap X)>1$.  Then for any very general $b\in B(k)$ the $k$-variety $\cal X_b$ does not admit a decomposition of the diagonal.
\end{lemma}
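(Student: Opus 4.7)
The plan is to reduce to the case where $W$ consists of a single closed point, apply the degeneration technique of Lemma \ref{Lemma:very_general_relative_tor} to a suitable $k_0$-defined family, and finally exploit the correspondence action of a hypothetical decomposition of the diagonal on the Chow group of zero cycles to derive a contradiction.

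First, I would note that the case $W\cap X=\emptyset$ is vacuous, since the torsion order $\Tor^\Lambda(X,\emptyset)$ equals the order of $1$ in $\Lambda$ and depends only on $\Lambda$. So I would pick a closed point $p_0\in W\cap X$; as $k$ is algebraically closed, $p_0$ is $k$-rational, and by Lemma \ref{Lemma_LS_3.6}(b) I would obtain $\Tor^\Lambda(X,\{p_0\})\geq \Tor^\Lambda(X,W\cap X)>1$, reducing the input of the argument to the single closed point $\{p_0\}$.

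Next, I would consider the $k_0$-defined family $\cal X\times_B\cal X\to\cal X$, whose fiber over a point $(b,p)\in\cal X(k)$ is $\cal X_b$, together with the $k_0$-defined diagonal section $\Delta_{\cal X/B}\subset\cal X\times_B\cal X$ that restricts over $(b,p)$ to the single point $\{p\}\subset\cal X_b$. At the distinguished $k$-point $(a,p_0)\in\cal X(k)$ (with $a\in B(k)$ the point corresponding to $X$) the relative torsion order is $>1$. Iterating \cite[Lem. 3.8]{LangeSchreieder2024} along curves in $\cal X$ connecting $(a,p_0)$ to a general point, as in the proof of Lemma \ref{Lemma:very_general_relative_tor}, should yield $\Tor^\Lambda(\cal X_b,\{p\})>1$ for every very general $(b,p)\in\cal X(k)$, and hence $\Tor^\bb Z(\cal X_b,\{p\})>1$ by Lemma \ref{Lemma_LS_3.6}(a).

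To conclude, I would argue by contradiction: if $\cal X_b$ admits a decomposition $\Delta_{\cal X_b}=z\times\cal X_b+Z$ with $\deg z=1$ and $Z$ not dominating the second factor, then the correspondence action of this decomposition on zero-cycles yields $[p]=[z]$ in $\CH_0(\cal X_b)$ for every $k$-rational point $p\in\cal X_b(k)\setminus p_2(Z)$, so that $\delta_{\cal X_b}=[p_{k(\cal X_b)}]$ and $\Tor^\bb Z(\cal X_b,\{p\})=1$. Since $p_2(Z)\subsetneq\cal X_b$ is proper, a $p$ can be chosen so that $(b,p)\in\cal X(k)$ is very general, contradicting the previous step. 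The main obstacle is the generalisation of the degeneration argument of Lemma \ref{Lemma:very_general_relative_tor} from the setting where the base is the projective parameter space $B$ (where one uses lines) to the higher-dimensional base $\cal X$; I would handle this by connecting the relevant points of $\cal X$ by a chain of curves and iteratively applying \cite[Lem. 3.8]{LangeSchreieder2024}.
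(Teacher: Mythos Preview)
Your strategy is sound: reduce $W\cap X$ to a single $k$-point $p_0$ via Lemma~\ref{Lemma_LS_3.6}(b), pass to the family $\cal Y:=\cal X\times_B\cal X\to\cal X$ with the relative diagonal as section, specialise from $(a,p_0)$ to obtain $\Tor^\Lambda(\cal X_b,\{p\})>1$ for very general $(b,p)\in\cal X(k)$, and then derive a contradiction from a hypothetical decomposition of the diagonal via the correspondence action on $\CH_0$. The obstacle you flag is real but tractable: $\cal X$ is a $k_0$-defined projective bundle over $\bb P^{n-r}\times\bb P^{r+1}$, so one can connect $(a,p_0)$ to a point satisfying the analogue of condition~(\ref{very_transcendental}) by a short chain of curves with integral generic fibres and iterate \cite[Lem.~3.8]{LangeSchreieder2024}. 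One point you should make explicit in the final step is that for a fixed very general $b\in B(k)$ the set $\{p\in\cal X_b(k):(b,p)\text{ very general in }\cal X\}$ is itself the complement of countably many proper closed subsets of $\cal X_b$, hence meets the open set $\cal X_b\setminus p_2(Z)$.

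The paper proceeds differently: it simply invokes the argument at the end of the proof of \cite[Theorem~5.7]{LangeZhang}, after recording that the standing assumption $\min\{n-r,r+1\}\ge 2$ forces $H^0(\cal X_b,\Omega^1)=0$ for the very general (smooth) fibre. Your route is thus more self-contained and purely cycle-theoretic, and in particular does not appeal to this Hodge-theoretic input; the paper's route is shorter on the page but outsources the work. Functionally, both arguments pass through the same specialisation over the total space $\cal X$ and the same endgame (a decomposition of the diagonal forces $\Tor^{\bb Z}(\cal X_b,\{p\})=1$ for $p$ outside a proper closed subset), so your proposal should be regarded as a correct, independently written version of the cited argument rather than a genuinely new proof.
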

\begin{proof}
    The claim follows by the same argument as at the end of the proof of \cite[Theorem 5.7]{LangeZhang}. 
Note that by assumption we have $\min\{n-r,r+1\}\geq 2$ and $d,f\geq 0$, hence $H^{0}(\cal X_b, \Omega^1)=0$. 
\end{proof}
\begin{remark}
   Let $k$ be algebraically closed field of characteristic zero. It follows by a result of Nicaise--Shinder \cite{NicaiseShinder2019} that if there exists a smooth $k$-variety $\cal X_a$ that is stably irrational, then a very general $k$-variety $\cal X_b$ is stably irrational. In this case one cannot drop the smoothness assumption for $\cal X_a$ as in the above Lemma. In fact a very general bidegree $(2,2)$ hypersurface in $\bb P^1\times \bb P^2$ is rational and degenerates to a cone over an elliptic curve, which is stably irrational but admits a decomposition of the diagonal \cite[Lem. 3.16]{LangeSkauli2023}. I.o.w., for a very general fiber we can find obstructions to the existence of the diagonal, but not to stable irrationality, in a singular fiber (not having the given property).
\end{remark}
\begin{remark}
    Statements analogous to Lemma \ref{Lemma:very_general_relative_tor} and Lemma \ref{Lemma:very_general_decomposition} hold for complete intersections of bidegree hypersurfaces as the parameter space of these is an open of product of projective spaces. 
\end{remark}
\section{Fourfolds of bidegree $(2,3)$ in $\bb P^1\times \bb P^4$}\label{Section_foufolds_bidegree}
It is shown in \cite[Theorem 6.3]{NicaiseOttem2022} that over a field $k$ of characteristic zero a very general hypersurface of bidegree $(2,3)$ in $\bb P_k^1\times \bb P_k^4$ is stably irrational. We upgrade this to obstruct the decomposition of the diagonal over arbitrary fields of characteristic different from two.
\begin{proposition}\label{prop:proposition_(2,3)_in_P^1timesP^4}
    Let $k$ be a field of characteristic different from two. A very general hypersurface of bidegree $(2,3)$ in $\bb P_k^1\times \bb P_k^4$ does not admit a decomposition of the diagonal, hence is not retract rational.
\end{proposition}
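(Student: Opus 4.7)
By Lemma~\ref{Lemma:very_general_decomposition} together with Lemma~\ref{Lemma_LS_3.6}(e), it suffices to exhibit, over the algebraic closure of $k$, a single geometrically integral bidegree $(2,3)$ hypersurface $X_0\subset\bb P^1\times\bb P^4$ and a nonempty closed subscheme $W$ for which $\Tor^{\bb Z/2}(X_0,W\cap X_0)>1$. The plan is to upgrade the Nicaise--Ottem stable irrationality argument \cite[Thm.~6.3]{NicaiseOttem2022} by replacing its motivic volume obstruction with the unramified cohomology obstruction of Proposition~\ref{unramifiedobstruction_relativedecomposition}.

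A general member admits the form $X=\{y_0^2F_0(x)+y_0y_1F_1(x)+y_1^2F_2(x)=0\}$ with $F_0,F_1,F_2$ cubics in $x_0,\ldots,x_4$, so that projection to $\bb P^4$ realises $X$ as the double cover branched along the sextic discriminant $\Delta=F_1^2-4F_0F_2$. I would specialise $(F_0,F_1,F_2)$ so that $\Delta$ acquires an HPT-style coordinate factorisation of the shape $\Delta=\ell_0\ell_1\ell_2\cdot C$ with $\ell_0,\ell_1,\ell_2$ linear forms and $C$ a cubic in general position; taking $F_1=2\ell_0\ell_1\ell_2$, $F_0=\ell_0\ell_1\ell_2$, $F_2=\ell_0\ell_1\ell_2-C$ yields $\Delta=4\ell_0\ell_1\ell_2\cdot C$ up to rescaling. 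For the candidate unramified class I would use the symbol
\[
\alpha=\bigl(\ell_1/\ell_0,\,\ell_2/\ell_0\bigr)\in H^2\bigl(k(\bb P^4),\bb Z/2\bigr),
\]
the $\bb P^4$-analogue of the HPT class from Proposition~\ref{proposition_rel_tor_ord_HPT}. The companion closed subset will be $W=X_0^{\mathrm{sing}}\cup\bigcup_i(\{\ell_i=0\}\cap X_0)\cup\bigcup_j(\{y_j=0\}\cap X_0)$.

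The verification of the hypotheses of Proposition~\ref{unramifiedobstruction_relativedecomposition} will parallel the argument in the proof of Proposition~\ref{proposition_rel_tor_ord_HPT} word by word. The first step is to check that $f^*\alpha$ is unramified on $X_0$, where $f:X_0\to\bb P^4$ is the double cover; this is a residue computation exploiting that $\ell_0\ell_1\ell_2$ becomes a norm on $X_0$ modulo squares, and it should follow the pattern of \cite[Prop.~10]{HassetPirutkaTschinkel2018}. Given an alteration $\tau:X_0'\to X_0$ of odd degree and a subvariety $E\subset\tau^{-1}(W)$, the vanishing of $(\tau^*\alpha)|_E$ will split into the two cases of Proposition~\ref{proposition_rel_tor_ord_HPT}: when $f\tau(E)$ does not dominate $\bb P^4$ vanishing follows from \cite[Thm.~9.2]{SchreiederAMS}, and when it does, $\tau(E)$ lands in one of the horizontal divisors $D_i=\{\ell_i=0\}\cap X_0$, reducing the problem to showing that the discriminant form of $f$ restricted to $D_i$ is similar to a subform of the Pfister form $\langle\langle\ell_1/\ell_0,\ell_2/\ell_0\rangle\rangle$ --- a direct analogue of Lemma~\ref{Lemma_subform_of_Pfister}.

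The main obstacle will be choosing $C$ and the linear forms $\ell_i$ so that three conditions hold simultaneously: $X_0$ is geometrically integral; the pullback $f^*\alpha$ is truly unramified, so the residue calculation must succeed at each $\ell_i=0$ and cancel along any extra ramification introduced by $\{C=0\}$; and the Pfister-subform identity of Lemma~\ref{Lemma_subform_of_Pfister} admits a $\bb P^4$-version tailored to the specific divisors $D_i$. Because $\Delta$ now lives on a four-dimensional base rather than on the two-dimensional base of the HPT example, one may have to enlarge $W$ to absorb additional singularities of $X_0$ or contributions from the non-reduced part of $\Delta$; this is a bookkeeping step that does not affect the underlying structure. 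Once these inputs are in hand, Proposition~\ref{unramifiedobstruction_relativedecomposition} delivers $\Tor^{\bb Z/2}(X_0,W)>1$, and Lemma~\ref{Lemma:very_general_decomposition} transports the conclusion to the very general member of the family.
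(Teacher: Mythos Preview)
Your plan diverges sharply from the paper's argument and, as written, contains a genuine gap.

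The paper's proof is a one–line birational trick: the HPT quadric surface bundle $Q\subset\bb P^2_{[y_0:y_1:y_2]}\times\bb P^3_{[x_0:\dots:x_3]}$ is already birational to a bidegree $(2,3)$ hypersurface in $\bb P^1\times\bb P^4$. One simply redistributes the base coordinates, sending $y_2/y_0$ to the $\bb P^1$ factor and $y_1/y_0$ to a new fifth coordinate in the fibre, obtaining an explicit integral $X\subset\bb P^1_{[s_0:s_1]}\times\bb P^4_{[z_0:\dots:z_4]}$ with $D_+(s_1)\times D_+(z_3)\cap X\cong D_+(y_0)\times D_+(x_3)\cap Q$. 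Proposition~\ref{proposition_rel_tor_ord_HPT} then gives $\Tor^{\bb Z/2}(X,\{s_1z_3=0\})=2$ for free, and Lemma~\ref{Lemma:very_general_decomposition} finishes. No new unramified class, no new residue calculation.

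Your approach instead uses the projection $f:X_0\to\bb P^4$, realising $X_0$ as a double cover, and tries to rerun the HPT argument over this four–dimensional base. The problem is that the vanishing mechanism in Case~(i) of Proposition~\ref{proposition_rel_tor_ord_HPT} is a cohomological–dimension argument: when $E$ does not dominate $\bb P^2$, its image has dimension $\le 1$, so $H^2$ of the function field of that image vanishes. Over $\bb P^4$ this fails completely: a subvariety $E\subset\tau^{-1}(W)$ can map onto a threefold in $\bb P^4$, and $(\ell_1/\ell_0,\ell_2/\ell_0)$ restricted to a threefold is typically nonzero. Concretely, with your choice $F_2=\ell_0\ell_1\ell_2-C$, the divisor $\{y_0=0\}\cap X_0$ is the cubic threefold $\{F_2=0\}\subset\bb P^4$, and $\alpha|_{\{F_2=0\}}\in H^2(k(\{F_2=0\}),\bb Z/2)$ has no reason to vanish. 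The theorems you cite from \cite{SchreiederAMS} do not bypass this: their vanishing results need either the dimension bound just described or the Pfister–neighbour structure of a genuine quadric bundle of positive fibre dimension, which your rank–two binary form $\langle 1,-\Delta\rangle$ does not supply.

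Your Case~(ii) is also miscast: the divisors $D_i=\{\ell_i=0\}\cap X_0$ are \emph{vertical}, not horizontal, for the projection to $\bb P^4$, so the ``discriminant form restricted to $D_i$ is a Pfister subform'' analogy with Lemma~\ref{Lemma_subform_of_Pfister} has no content here. In fact no component of your $W$ dominates $\bb P^4$, so Case~(ii) is vacuous and Case~(i) must carry the entire argument --- which, as above, it cannot for an $H^2$ class over a four–dimensional base.
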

\begin{proof}
    We can assume that $k$ is algebraically closed and uncountable. Consider the integral bidegree $(2,3)$ hypersurface \begin{equation}
        X:=\{z_3^3 s_0^2
+ (z_4 z_0^2 + z_3 z_1^2 -2 z_3^2(z_3 +z_4))s_0s_1+(z_4 z_2^2+z_3(z_3-z_4)^2) s_1^2=0\}\end{equation}\[\subset \Proj{k[s_0,s_1]}\times \Proj {k[z_0,z_1,z_2,z_3,z_4]}.
     \]
    We see that the dense open subset $D_+(s_1)\times D_+(z_3)\cap X$ is isomorphic to the open $D_+(y_0)\times D_+(x_3)\cap Q$ for $Q$ as in Example \ref{example_HPT}. More precisely, under the ring isomorphism $\phi$ given by \[
    \frac{y_1}{y_0}\mapsto \frac{z_4}{z_3},\qquad
\frac{y_2}{y_0}\mapsto \frac{s_0}{s_1},\qquad
\frac{x_i}{x_3}\mapsto \frac{z_i}{z_3}\ \ (i=0,1,2)
    \]
    we have \[
    \frac{y_1}{y_0}\frac{y_2}{y_0}\left(\frac{x_0}{x_3}\right)^2
+\frac{y_2}{y_0}\left(\frac{x_1}{x_3}\right)^2
+\frac{y_1}{y_0}\left(\frac{x_2}{x_3}\right)^2
+F\!\left(1,\frac{y_1}{y_0},\frac{y_2}{y_0}\right)\mapsto \] \[\left(\frac{s_0}{s_1}\right)^2
+\left(
\frac{z_4}{z_3}\left(\frac{z_0}{z_3}\right)^2
+\left(\frac{z_1}{z_3}\right)^2
-2\left(1+\frac{z_4}{z_3}\right)
\right)\left(\frac{s_0}{s_1}\right)
+\frac{z_4}{z_3}\left(\frac{z_2}{z_3}\right)^2
+\left(1-\frac{z_4}{z_3}\right)^2.
    \]
    It follows from Proposition \ref{proposition_rel_tor_ord_HPT} that for $\Lambda:=\bb Z/2$ we have $\Tor^\Lambda(X,\{s_1z_3=0\})=2$. By setting $W=\{s_1z_3=0\}$ in Lemma \ref{Lemma:very_general_decomposition} we can conclude the proof.
\end{proof}
\section{Küchle varieties of type $b_4$}\label{Section_Küchle_varieties}
In this section, we sketch an application of the degeneration method of Lange--Schreieder and show that a very general Küchle variety of type $b_4$ does not admit a decomposition of the diagonal. Closely related arguments are carried out in full detail in Sections~\ref{Section:maintheorem} and~\ref{Sect:Raising_dimension_without_raising_degree}.\par
For the definition of a Küchle variety of type $b_4$ and its importance in the classification of Fano varieties we refer to \cite{Kuechle1995} and \cite{KuznetsovKuechle}. We will just use the fact that by a result of Kuznetsov \cite[Prop. 2.1]{KuznetsovKuechle} a smooth Küchle variety of type $b_4$ is isomorphic to a complete intersection $X$ of two hypersurfaces of bidegree $(1,1)$ and $(2,2)$ in $\bb P^3_{\bb C}\times \bb P^3_{\bb C}$. It follows by adjunction that the anticanonical sheaf of $X$ is $\omega_{X}^\vee\cong \cal O_X(1,1)$. Hence $X$ is Fano, and its torsion order is finite $\Tor^\bb Z(X)<\infty.$
Moe has shown that a very general intersection of such hypersurfaces is not stably rational \cite[Thm. 6.2.2]{Moethesis2020}.\par  Let $k$ be a field. Let $F$ define a very general bidegree $(2,2)$ hypersurface in $\Proj k[y_0,y_1,y_2]\times \Proj k[x_0,x_1,x_2]$. Let $R=k[t]_{(t)}$ and consider the degeneration 
    \begin{equation}\label{eq:Küchle_degeneration}
        \cal X:=\{F+y_0x_0^2a+y_1^2x_1b=0=ab+tx_0y_0\}\subset \Proj R[y_0,y_1,y_2,a]\times \Proj R[x_0,x_1,x_2,b].
    \end{equation}
     of a Küchle variety of type $b_4$ to the union of two $(2,2)$ hypersurfaces $Y_0$ and $Y_1$ in $\bb P^3_k\times \bb P^3_k$ that intersect in the $k$-variety $Z:=\{F=a=b=0\}$. The key ingredient of the degeneration method is the following theorem of Lange--Schreieder (building on work of Schreieder--Pavic \cite{PavicSchreieder2023}). 
 \begin{theorem}\label{Theorem_LS_4.3} \cite[Thm. 4.3]{LangeSchreieder2024}
    Let $R$ be a discrete valuation ring with fraction field $K$ and algebraically closed residue field $k$, and let $\Lambda$ be a ring of positive characteristic $c\in \bb Z_{\geq 1}$ such that the exponential characteristic of $k$ is invertible in $\Lambda$. Let $\cal X\to \Spec R$ be a proper flat separated $R$-scheme with geometrically integral generic fiber $X$ and special fiber $Y$. Now if $W_\cal X\subset \cal X$ is a closed subscheme such that
\begin{enumerate}
    \item $\cal X^\circ:= \cal X\setminus W_\cal X$ is a strictly semistable $R$-scheme, and
    \item $Y^\circ:=Y\setminus W_Y=\bigcup_{i\in I}Y_i^\circ$ has no triple intersections,
\end{enumerate}
then the group \begin{equation}\label{eq:LS_cocker}
    \mathrm{coker} \left(\Psi_{Y_L^\circ}^\Lambda:\bigoplus_{l\in I}\CH_1(Y_{l}^\circ\times_k L,\Lambda)\to \bigoplus_{\stackrel{i,j\in I}{i<j}}\CH_0(Y_{ij}^\circ\times_k L,\Lambda)\right) \end{equation}
    is $\Tor^\Lambda(\bar{X},W_{\bar{X}})$-torsion for every field extension $L/k$. 
    \end{theorem}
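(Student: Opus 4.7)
The plan is to transport the defining relation of the relative torsion order from the geometric generic fiber to the special fiber by a specialization argument, and to interpret the resulting relation as an $e$-torsion statement in the cokernel of $\Psi^\Lambda_{Y^\circ_L}$, where $e := \Tor^\Lambda(\bar X, W_{\bar X})$. This follows the philosophy developed in \cite{PavicSchreieder2023}: for a strictly semistable degeneration whose special fiber has no triple intersections, the complex $\bigoplus_l \CH_1(Y^\circ_l,\Lambda) \to \bigoplus_{i<j} \CH_0(Y^\circ_{ij},\Lambda)$ is an avatar of the second differential in the weight spectral sequence, cycles on the generic fiber specialize to classes in its kernel, and the cokernel measures an obstruction that is controlled by the torsion order on $\bar X$.

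First I would reduce to the case $L=k$. Base changing $R$ along a DVR with residue field $L$ preserves strict semistability, the absence of triple intersections, and the decomposition $Y^\circ=\bigcup_{i\in I} Y^\circ_i$, while leaving the relative torsion order of the geometric generic fiber unchanged by Lemma~\ref{Lemma_LS_3.6}(e). Next, by Remark~\ref{Remark_LS_3.4}, the hypothesis $\Tor^\Lambda(\bar X, W_{\bar X})=e$ means that $e\cdot\delta_{\bar X^\circ}=0$ in $\CH_0(\bar X^\circ_{\overline{K}(\bar X)},\Lambda)$, where $\bar X^\circ = \bar X\setminus W_{\bar X}$. This relation spreads out to a cycle-theoretic identity on $\bar X^\circ\times_{\bar K}\bar X^\circ$, which after passing to a finite extension of $R$ can be lifted to a rational equivalence on $\mathcal{X}^\circ\times_R\mathcal{X}^\circ$ whose support is compatible with $W_\mathcal{X}$. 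Here I use that $\mathcal{X}^\circ$ is regular (being strictly semistable over the regular base $R$), so that Chow-theoretic pullbacks and pushforwards behave as expected.

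The core step is specialization. For the regular $R$-scheme $\mathcal{X}^\circ$, restriction to the special fiber defines a pullback $\CH_*(\mathcal{X}^\circ,\Lambda)\to\CH_*(Y^\circ,\Lambda)$ that factors through the natural push-forward $\bigoplus_l \CH_*(Y^\circ_l,\Lambda)\to \CH_*(Y^\circ,\Lambda)$. Given a class $(\beta_{ij})\in \bigoplus_{i<j}\CH_0(Y^\circ_{ij},\Lambda)$, I would use the spread-out diagonal relation as a correspondence: applied to representatives of the $\beta_{ij}$ viewed as zero-cycles on $\mathcal{X}^\circ$ via the closed immersions $Y^\circ_{ij}\hookrightarrow \mathcal{X}^\circ$, the identity $e\cdot\Delta_{\bar X^\circ} - \Gamma \sim 0$ produces a $1$-cycle $\tilde\alpha$ on $\mathcal{X}^\circ$ whose restrictions $\alpha_l$ to the components $Y^\circ_l$ yield a preimage of $e\cdot(\beta_{ij})$ under $\Psi^\Lambda_{Y^\circ}$, modulo the image of $\Psi$ coming from the error term $\Gamma$. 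Here one exploits that $\iota_{Y^\circ}^*\circ \iota_{Y^\circ_l,*}(\alpha_l) = \sum_{j\neq l} (\alpha_l \cdot Y_j)|_{Y^\circ_{lj}}$ at the level of intersections, which is well-defined precisely because there are no triple intersections on~$Y^\circ$.

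The main obstacle is the careful bookkeeping with the closed subset $W_\mathcal{X}$. The relative torsion order only controls cycles on the complement of $W$, so after specialization one must verify that the error supported on $W_Y$ does not contaminate the desired relation on the opens $Y^\circ_l$ and $Y^\circ_{ij}$. This is precisely why the hypothesis is formulated in terms of $\mathcal{X}^\circ = \mathcal{X} \setminus W_\mathcal{X}$ being strictly semistable, rather than $\mathcal{X}$ itself: the opens $Y^\circ_l$ are smooth and the $Y^\circ_{ij}$ are smooth and transverse intersections, so one can cleanly apply pushforward, pullback, and localization sequences without contending with singularities hidden inside $W_Y$. Running the spreading and specialization entirely within the category of regular, relative to $W_\mathcal{X}$, schemes, is what makes the argument go through; the analogous statement without removing $W$ would fail precisely where the total space ceases to be regular.
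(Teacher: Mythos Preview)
This theorem is not proved in the paper: it is quoted verbatim as \cite[Thm.~4.3]{LangeSchreieder2024} and used as a black box. There is therefore no proof in the paper to compare your proposal against.

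As to your sketch itself, the overall philosophy---specialize the relative diagonal relation $e\cdot\delta_{\bar X^\circ}=0$ to the special fiber of the strictly semistable model and read off the torsion in the cokernel of $\Psi$---is indeed the shape of the Lange--Schreieder argument. The reduction to $L=k$ and the role of the no-triple-intersection hypothesis are also correctly identified. However, your ``core step'' is imprecise in a way that matters. You propose to fix an arbitrary class $(\beta_{ij})$ in the target of $\Psi$ and use the spread-out diagonal relation as a correspondence acting on it to produce a preimage of $e\cdot(\beta_{ij})$. The actual argument does not proceed this way: one specializes $\delta_{X^\circ}$ via Fulton's specialization map to obtain a distinguished zero-cycle on $Y^\circ_L$ for $L=k(Y^\circ)$, shows that this class lifts canonically to $\bigoplus_l \CH_0(Y^\circ_{l,L},\Lambda)$, and then proves that an adjoint map $\Phi$ (whose cokernel controls $\operatorname{coker}\Psi$) annihilates $e$ times this lift. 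Your formulation ``view $\beta_{ij}$ as zero-cycles on $\cal X^\circ$ and apply the diagonal correspondence'' does not obviously make sense: $Y^\circ_{ij}$ sits in the special fiber, not in $\bar X$, so the diagonal of $\bar X^\circ$ does not act on it as a correspondence without first passing through specialization, and the output would not naturally be a $1$-cycle on $\cal X^\circ$ as you claim. If you want to reconstruct the proof rather than cite it, you should follow the obstruction-map formalism of \cite{PavicSchreieder2023} and \cite{LangeSchreieder2024} more closely.
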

The equations defining (\ref{eq:Küchle_degeneration}) are chosen so that, after removing the coordinate hyperplanes, the groups of $1$-cycles on the components of the special fiber vanish as shown in Lemma~\ref{Lemma_simple_CH_1}. This makes the cokernel in \eqref{eq:LS_cocker} more tractable.
\begin{lemma}\label{Lemma_simple_CH_1}
Let $k$ be an algebraically closed field and
\[
X \subset \bb P^{n-r}_k \times \bb P^{r+1}_k
= \Proj k[y_0,\dots,y_{n-r}] \times \Proj k[x_0,\dots,x_{r+1}]
\]
a bidegree $(d,f)$ hypersurface given by the equation
\[
X=\Bigl\{\,Q(x_0,\dots,x_r,y_0,\dots,y_{n-r}) \;+\; P(x_0,\dots,x_r,y_0,\dots,y_{n-r})\,x_{r+1}=0\,\Bigr\},
\]
with $P\neq 0$. If $\{P=0\}$ contains a divisor of type $(1,0)$ and a divisor of type $(0,1)$ in
$\bb P^{n-r}_k \times \bb P^{r+1}_k$, then
\[
\CH_1\bigl(X\setminus\{P=0\}\bigr)\cong 0.
\]
\end{lemma}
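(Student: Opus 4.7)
The plan is to identify $U := X \setminus \{P=0\}$ with an open subset of $\bb P^{n-r} \times \bb P^r$ via linear projection from the point $p = [0:\cdots:0:1]$ in the second factor, and then compute $\CH_1$ by a localization argument.

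First, I would show that $U$ is isomorphic to $V := (\bb P^{n-r}\times\bb P^r)\setminus\{P=0\}$. The projection $U \to V$ forgetting $x_{r+1}$ is well defined on the complement of the base locus $\bb P^{n-r}\times\{p\}$, and the assignment $(y,[x_0:\cdots:x_r])\mapsto (y,[Px_0:\cdots:Px_r:-Q])$ produces a morphism $V\to X$ whose image lies in $U$ and which is inverse to the projection. The essential check is that $U\cap (\bb P^{n-r}\times\{p\})=\emptyset$: since $Q$ has degree $f\geq 1$ in the variables $x_0,\dots,x_r$, it vanishes identically at $x_0=\cdots=x_r=0$, so the defining equation of $X$ restricted to $\bb P^{n-r}\times\{p\}$ becomes $P(y,0)\cdot 1=0$, forcing every such point of $X$ to also lie in $\{P=0\}$.

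Next, I would apply the localization exact sequence
$$\CH_1(\{P=0\})\xrightarrow{i_*}\CH_1(\bb P^{n-r}\times\bb P^r)\to \CH_1(V)\to 0,$$
together with the standard computation that $\CH_1(\bb P^{n-r}\times\bb P^r)$ is freely generated by $[\bb P^1\times\mathrm{pt}]$ and $[\mathrm{pt}\times\bb P^1]$. It then suffices to exhibit both generators in the image of $i_*$. For this I would invoke the two divisor hypotheses: a $(1,0)$-divisor inside $\{P=0\}$ has the form $\{\ell(y)=0\}$ and contains a copy of $\bb P^{n-r-1}\times\bb P^r$ in $\{P=0\}\cap(\bb P^{n-r}\times\bb P^r)$, so pushing forward a line in its $\bb P^r$-factor yields the class $[\mathrm{pt}\times\bb P^1]$. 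Similarly, because $P$ does not involve $x_{r+1}$, any $(0,1)$-divisor in $\bb P^{n-r}\times\bb P^{r+1}$ that is contained in $\{P=0\}$ must be cut out by a linear form $m(x_0,\dots,x_r)$ in the first $r+1$ of the $x$-variables; this gives a copy of $\bb P^{n-r}\times\bb P^{r-1}$ inside $\{P=0\}$, from which the class $[\bb P^1\times\mathrm{pt}]$ is recovered. Both generators therefore lie in the image of $i_*$, so $\CH_1(V)=\CH_1(U)=0$.

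The main obstacle I anticipate is the first step, namely verifying that the birational projection from $p$ extends to a genuine isomorphism on $U$, which reduces to the disjointness check above and requires a brief case analysis on whether $f=1$ or $f\geq 2$ to see that $P(y,0)$ forces the relevant locus to sit inside $\{P=0\}$. The bookkeeping for the $(0,1)$-divisor — descending it from $\bb P^{n-r}\times\bb P^{r+1}$ to $\bb P^{n-r}\times\bb P^r$ — is a minor point that follows immediately from $P$ being independent of $x_{r+1}$, and the remaining Chow-theoretic step is routine.
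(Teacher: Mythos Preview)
Your proposal is correct and follows the paper's approach: both identify $U$ with $V=(\bb P^{n-r}\times\bb P^r)\setminus\{P=0\}$ via the projection forgetting $x_{r+1}$, with the same explicit inverse. For the vanishing of $\CH_1(V)$ the paper is slightly more direct---it observes that removing the $(1,0)$ and $(0,1)$ hyperplanes from $\bb P^{n-r}\times\bb P^r$ already leaves an open subscheme of $\bb A^{n-r}\times\bb A^r=\bb A^n$, where $\CH_1$ vanishes---whereas you run the localisation sequence in $\bb P^{n-r}\times\bb P^r$ and hit both generators; these are the same argument in different packaging.
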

\begin{proof}
The lemma follows from the fact that $X\setminus\{P=0\}$ is isomorphic to an open subscheme of
$\bb A^n_k$, and for such schemes the group $\CH_1$ vanishes. For the first fact consider the projection
\[
\pi \colon X \dashrightarrow \bb P^{n-r}_k \times \bb P^r_k,\qquad
(x_0:\cdots:x_{r+1},\, y_0:\cdots:y_{n-r}) \longmapsto (x_0:\cdots:x_r,\, y_0:\cdots:y_{n-r}).
\]
Restricting $\pi$ to
\[
\pi\big|_{X\setminus\{P=0\}} \colon X\setminus\{P=0\} \to
\bigl(\bb P^{n-r}_k \times \bb P^r_k\bigr)\setminus\{P=0\}
\]
gives an isomorphism with inverse
\[
\phi \colon \bigl(\bb P^{n-r}_k \times \bb P^r_k\bigr)\setminus\{P=0\} \to X\setminus\{P=0\},
\]
\[
(x_0:\cdots:x_r,\, y_0:\cdots:y_{n-r}) \longmapsto
\Bigl(x_0:\cdots:x_r:\,-
\frac{Q(x_0,\dots,x_r,y_0,\dots,y_{n-r})}{P(x_0,\dots,x_r,y_0,\dots,y_{n-r})}
:\, y_0:\cdots:y_{n-r}\Bigr).
\]
\end{proof}

\begin{theorem}
   Let $k$ be an algebraically closed field of characteristic zero. Then a very general complete intersection of hypersurfaces of bidegree $(1,1)$ and $(2,2)$ in $\bb P^3_k\times \bb P^3_k$ does not admit a decomposition of the diagonal. In particular, a very general Küchle variety of type $b_4$ is not retract rational.
\end{theorem}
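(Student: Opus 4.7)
The plan is to apply the Lange--Schreieder degeneration theorem (Theorem \ref{Theorem_LS_4.3}) to the family $\cal X \to \Spec R$ from (\ref{eq:Küchle_degeneration}), with $\Lambda=\bb Z/2$. The generic fibre $\bar X$ is a smooth Küchle variety of type $b_4$, while the special fibre is the union of the two bidegree $(2,2)$ components $Y_0=\{b=0,\;F+y_0x_0^2a=0\}$ and $Y_1=\{a=0,\;F+y_1^2x_1b=0\}$ meeting along $Z=\{a=b=F=0\}\subset \bb P^2_k\times\bb P^2_k$, a very general $(2,2)$ hypersurface. Having only two components means no triple intersections occur, which greatly simplifies the combinatorics of Theorem \ref{Theorem_LS_4.3}.

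First I would carry out the strict semistability check. Defining $W_{\cal X}$ to be the union of the singular locus of $\cal X$ with the coordinate divisor $\{y_0y_1x_0x_1=0\}$ together with the singular loci of $Y_0$, $Y_1$ and $Z$, a direct Jacobian computation in affine charts---exploiting the very generality of $F$---shows that $\cal X\setminus W_{\cal X}$ is strictly semistable over $R$ with snc special fibre $Y_0^\circ\cup Y_1^\circ$. Theorem \ref{Theorem_LS_4.3} then yields, for every extension $L/k$, that the cokernel
\[
\cal C_L:=\mathrm{coker}\Bigl(\CH_1(Y_0^\circ\times_k L,\bb Z/2)\oplus\CH_1(Y_1^\circ\times_k L,\bb Z/2)\xto{\Psi}\CH_0(Z^\circ\times_k L,\bb Z/2)\Bigr)
\]
is $\Tor^{\bb Z/2}(\bar X,W_{\bar X})$-torsion. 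Each $Y_i$ fits exactly the shape of Lemma \ref{Lemma_simple_CH_1} (up to the symmetric swap of the two projective factors): the coefficient $y_0x_0^2$ of the distinguished coordinate $a$ in $Y_0$ vanishes on the type $(1,0)$ divisor $\{y_0=0\}$ and on the type $(0,1)$ divisor $\{x_0=0\}$, and analogously for $Y_1$. Consequently $\CH_1(Y_i^\circ\times_k L,\bb Z/2)=0$, so $\cal C_L\cong\CH_0(Z^\circ\times_k L,\bb Z/2)$.

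The heart of the argument is to exhibit a nonzero element of $\cal C_{k(Z)}$. The natural candidate is the diagonal class $\delta_{Z^\circ}\in \CH_0(Z^\circ\times_k k(Z),\bb Z/2)$, detected via the Merkurjev pairing against a nontrivial unramified class $\alpha\in H^2_{nr}(k(Z)/k,\bb Z/2)$, exactly as in the proof of Proposition \ref{unramifiedobstruction_relativedecomposition}. The existence of such an $\alpha$ is the Hassett--Tschinkel input underlying Proposition \ref{Proposition:Tor_simple_subschemes}: the very general $(2,2)$ hypersurface $Z$ in $\bb P^2\times\bb P^2$ admits an unramified Brauer class (of Pfister-symbol type, pulled back from the conic bundle base). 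Granting this, $\langle \delta_{Z^\circ},\alpha\rangle=\alpha\neq 0$ in $H^2(k(Z),\bb Z/2)$, so $\delta_{Z^\circ}\neq 0$ in $\cal C_{k(Z)}$. This forces $\Tor^{\bb Z/2}(\bar X,W_{\bar X})\neq 1$, hence by Lemma \ref{Lemma_LS_3.6}(a) also $\Tor^{\bb Z}(\bar X)>1$, so $\bar X$ does not admit a decomposition of the diagonal. The complete intersection analogue of Lemma \ref{Lemma:very_general_decomposition} mentioned in the remark following it then propagates this non-existence to the very general $(1,1)$-$(2,2)$ complete intersection, finishing the proof.

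The main obstacle is verifying that $\alpha$ is unramified along \emph{every} subvariety of $W_Z:=W_{\cal X}\cap Z$, so that the Merkurjev pairing is well-defined on $Z^\circ$. This reduces to a Pfister-form subform computation in the spirit of Lemma \ref{Lemma_subform_of_Pfister}, and it constrains the shape of $W_{\cal X}$ already in the semistability step: one must choose $W_{\cal X}$ large enough to contain every divisor along which $\alpha$ might ramify, yet small enough to preserve both the strict semistability of $\cal X\setminus W_{\cal X}$ and the $\CH_1$-vanishing supplied by Lemma \ref{Lemma_simple_CH_1}.
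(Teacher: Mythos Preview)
Your overall architecture matches the paper: apply Theorem \ref{Theorem_LS_4.3} to the family \eqref{eq:Küchle_degeneration}, use Lemma \ref{Lemma_simple_CH_1} to kill $\CH_1(Y_i^\circ)$, and then exhibit $\delta_{Z^\circ}\neq 0$ in $\CH_0(Z^\circ_{k(Z)},\Lambda)$. The gap is in how you carry out this last step.

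You claim that the very general smooth $(2,2)$ hypersurface $Z\subset\bb P^2\times\bb P^2$ carries a nontrivial unramified class $\alpha\in H^2_{nr}(k(Z)/k,\bb Z/2)$, ``of Pfister-symbol type, pulled back from the conic bundle base'', and describe this as the content of Proposition \ref{Proposition:Tor_simple_subschemes}. This conflates the situation with Example \ref{example_HPT}: there the projection to $\bb P^2$ is a quadric \emph{surface} bundle and the Pfister symbol survives on the total space, whereas here the fibres of $Z\to\bb P^2$ are \emph{conics}, and the Brauer class of the generic conic dies after pullback to $k(Z)$ (the generic fibre acquires a tautological rational point). In fact $Z$ is a smooth Fano threefold with $H^3(Z,\bb Z)$ torsion-free, so $H^2_{nr}(k(Z)/k,\bb Z/2)=Br(Z)[2]=0$ and no such $\alpha$ exists at all. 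Proposition \ref{Proposition:Tor_simple_subschemes} does not produce an unramified class on $Z$; it instead applies Lemma \ref{Lemma:Tor_simple_subschemes}, combining the Hassett--Tschinkel fact that $Z$ is not universally $\CH_0$-trivial (itself proved in \cite{HassetTschinkel2019} by specialising $Z$ to a \emph{singular} conic bundle, not by a class on the smooth model) with the computation that each coordinate hyperplane section $\{x_i=0\}\cap Z$ is a rational del Pezzo surface, hence universally $\CH_0$-trivial. Your proposed Merkurjev-pairing argument and the accompanying ``Pfister-form subform computation'' therefore have no class to pair against.

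A secondary issue is the choice $\Lambda=\bb Z/2$: this would require knowing $2\mid\Tor^{\bb Z}(Z,W)$, which Proposition \ref{Proposition:Tor_simple_subschemes} does not establish (it only gives $\Tor^{\bb Z}(Z,W)>1$). The paper sidesteps this by taking $\Lambda=\bb Z/\Tor^{\bb Z}(C)$ with $C$ the very general K\"uchle fourfold (finite since $C$ is Fano), so that the divisibility chain $\Tor^\Lambda(Z,W_{\cal X}\cap Z)\mid\Tor^\Lambda(\bar X,W_{\bar X})\mid\Tor^{\bb Z}(\bar X,W_{\bar X})\mid\Tor^{\bb Z}(\bar X)$ concludes the argument directly.
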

\begin{proof}
    Let $C$ be a very general complete intersection of hypersurfaces of bidegree $(1,1)$ and $(2,2)$ in $\bb P^3_k\times \bb P^3_k$ and $\Lambda=\bb Z/\Tor^\bb Z(C)$.  Note that the geometric generic fiber of the family (\ref{eq:Küchle_degeneration}), denoted by $\bar{X}:=\cal X_{\overline{k(t)}}$, is integral. 
    Let
    \begin{equation*}
        W_{\cal X}:=\{x_0x_1y_0y_1=0\}\subset \cal X.
    \end{equation*}
    Since $\cal X^{\mathrm{sing}}\subset W_{\cal X}$ and $Y_0\setminus W_{\cal X},Y_1-W_{\cal X}$ and $Z$ are smooth, we get that $\cal X^\circ$ is strictly semistable. Therefore the assumptions of \ref{Theorem_LS_4.3} are satisfied and for $L=k(Z)$ the cokernel of the map 
    $$\Psi_{Y_L^\circ}^\Lambda:\CH_1(Y_0^\circ\times_k L,\Lambda)\oplus \CH_1(Y_1^\circ\times_k L,\Lambda)\to \CH_0(Z^\circ\times_k L,\Lambda)$$
    is $\Tor^\Lambda(X^{\circ},\emptyset)$-torsion. By Lemma \ref{Lemma_simple_CH_1} we have that $\mathrm{coker}(\Psi_{Y_L^\circ}^\Lambda)=\CH_0(Z^\circ\times_k L,\Lambda)$ and it now follows from Proposition \ref{Proposition:Tor_simple_subschemes} that $\Tor^\Lambda (Z,W_{\cal X}\cap Z)>1$. The assertion now follows from the fact that $\Tor^\Lambda (Z,W_{\cal X}\cap Z)|\Tor^\Lambda(\bar{X},W_{\bar{X}})|\Tor^\bb Z(\bar{X},W_{\bar{X}})|\Tor^\bb Z(\bar{X})$, where the last divisibility uses that $\bar{X}$ is Fano.
\end{proof}


\section{Raising dimension and degree}\label{Section:maintheorem}

In order to prove our main theorems we need a general degeneration statement that can be applied iteratively starting from a given explicit example and which raises both the degree and the dimension. 

\theoremraisedegdim*

We begin by proving the special case of just raising the degree, i.e. $n'=n$.
\begin{proposition}\label{proposition_raise_degree}
   Theorem \ref{theorem_raise_deg_dim} holds for $n=n'$.
\end{proposition}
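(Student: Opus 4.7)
The plan is to proceed by induction on $(d' - d) + (f' - f) \geq 0$, the base case being trivial. By the symmetry between the two projective factors, and between the pairs $(y_0,y_1),(x_0,x_1)$ appearing in $W = \{y_0 x_0 y_1 x_1 = 0\}$, it suffices for the inductive step to treat $(d', f') = (d+1, f)$. By the inductive hypothesis and Lemma \ref{Lemma:very_general_relative_tor}, I fix an integral bidegree $(d, f)$ example $X_0 = \{F_0 = 0\}$ with $\Tor^\Lambda(X_0, \{y_0 x_0 y_1 x_1 = 0\}) \neq 1$, and it then suffices to construct a single integral bidegree $(d+1, f)$ hypersurface with the desired property.

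I would choose a linear form $L$ in the $y$-variables not appearing in $W$, e.g.\ $L = y_2$ (assuming $n - r \geq 2$; the very low-dimensional boundary cases are checked directly from the hypothesis). Set up the degeneration over $R = k[t]_{(t)}$:
$$\cal X := \{L \cdot F_0 + t G = 0\} \subset \bb P^{n-r}_R \times \bb P^{r+1}_R,$$
with $G$ a very general bidegree $(d+1, f)$ form. The generic fiber is an integral bidegree $(d+1, f)$ hypersurface; the special fiber is $Y_0 \cup Y_1$, where $Y_0 = \{L = 0\}$ is a linear subspace isomorphic to $\bb P^{n-r-1}_k \times \bb P^{r+1}_k$, and $Y_1 = X_0$, meeting transversely (for generic $G$) along $Z = \{L = F_0 = 0\}$. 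The total space has ordinary double points along $\{L = F_0 = G = t = 0\}$, which I would resolve by a small blowup of the Weil divisor $\{L = t = 0\}$, obtaining a strictly semistable model $\tilde \cal X$. With $W_{\tilde \cal X}$ defined as the strict transform of $\{y_0 x_0 y_1 x_1 = 0\} \cap \cal X$, Theorem \ref{Theorem_LS_4.3} gives that the cokernel
$$M_L := \mathrm{coker}\!\left(\Psi_L : \CH_1(\tilde Y_0^\circ \times_k L, \Lambda) \oplus \CH_1(\tilde Y_1^\circ \times_k L, \Lambda) \to \CH_0(\tilde Z^\circ \times_k L, \Lambda)\right)$$
is $\Tor^\Lambda(\bar{\cal X}_K, W_{\bar{\cal X}_K})$-torsion for every field extension $L/k$, and Lemma \ref{Lemma_simple_CH_1} applied to the linear subspace $\tilde Y_0$ gives $\CH_1(\tilde Y_0^\circ \times_k L, \Lambda) = 0$.

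The main obstacle is to show that $M_L \neq 0$ using the inductive hypothesis on $X_0$, whose natural avatar lives on $X_0$, not on the hyperplane section $Z \subsetneq X_0$. The plan is to invoke Proposition \ref{unramifiedobstruction_relativedecomposition}: the hypothesis gives an unramified cohomology class $\alpha \in H^i_{nr}(k(X_0)/k, \bb Z/\ell)$ whose restriction to subvarieties of $W \cap X_0$ vanishes. Pulling $\alpha$ back along an alteration of $\tilde \cal X$, one wants to verify that it restricts non-trivially to the generic point of $\tilde Z$, which lies outside $W$ thanks to the choice of $L$; compatibility of $\alpha$ with the hyperplane section $Z = X_0 \cap \{L = 0\}$ is the key technical hurdle and dictates how flexibly $L$ can be chosen in practice. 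Once this is established, we deduce $\Tor^\Lambda(\bar{\cal X}_K, W_{\bar{\cal X}_K}) \neq 1$, and Lemma \ref{Lemma:very_general_relative_tor} transfers this conclusion to a very general bidegree $(d+1, f)$ hypersurface in $\bb P^{n-r}_k \times \bb P^{r+1}_k$, completing the inductive step.
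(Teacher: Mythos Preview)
Your approach has a genuine gap, and it stems from the choice of the auxiliary form $L$. You deliberately take $L = y_2$ \emph{outside} $W = \{y_0 x_0 y_1 x_1 = 0\}$, which forces you to deal with the hyperplane section $Z = X_0 \cap \{L = 0\}$ via Theorem~\ref{Theorem_LS_4.3}. The inductive hypothesis, however, only says $\Tor^\Lambda(X_0, W) \neq 1$; it does \emph{not} furnish an unramified cohomology class on $X_0$, so the appeal to Proposition~\ref{unramifiedobstruction_relativedecomposition} is unjustified. That proposition is a tool for producing specific base examples, not a way to transport an abstract torsion-order obstruction to a hyperplane section. As you yourself note, whether the obstruction survives on $Z$ is the ``key technical hurdle''; in fact you have no mechanism to resolve it.

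The paper's argument avoids all of this by making the opposite choice: take the auxiliary factor \emph{inside} $W$, namely $F = y_0^{d'-d} x_0^{f'-f}$, and degenerate $\cal X = \{tH - G\cdot F = 0\}$ with $G$ the given $(d,f)$ example and $H$ very general of bidegree $(d',f')$. Then the special fiber is $\{G\cdot F = 0\}$, and since $\{F = 0\} \subset W$, after removing $W$ the special fiber is literally $\{G=0\}\setminus W$. Now the ordinary specialisation map for Chow groups (i.e.\ \cite[Lem.~3.8]{LangeSchreieder2024}, a relative form of Totaro's \cite[Lem.~2.4]{Totaro2016}) sends $\delta_X$ to $\delta_{Y^\circ}$ and gives $\Tor^\Lambda(\{G=0\}, W) \mid \Tor^\Lambda(X, W)$ directly; no semistable resolution, no Theorem~\ref{Theorem_LS_4.3}, and no unramified cohomology are needed. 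Reworking your argument with $L = y_0$ (or more generally any form supported in $W$) collapses it to exactly this.
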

\begin{proof}
Let $k$ be an uncountable and algebraically closed field of characteristic different from two and let $R=k[t]_{(t)}$.  We consider the degeneration 
 $$\cal X=\{tH-G\cdot F\}\subset \Proj R[y_0,...,y_{n-r}]\times \Proj R[x_0,...,x_{r+1}],$$
 where $H$ is a very general bidegree $(d',f')$ hypersurface, $G$ is a very general bidegree $(d,f)$ hypersurface and $F$ is a bidegree $(d'-d,f'-f)$ hypersurface which is not contained in $W_{\cal X}=\{y_0x_0y_1x_1=0\}$ 
 Let $X$ be the generic fiber and $Y$ the special fiber of $\cal X$. 
If we choose $F=y_0^{d'-d}x_0^{f'-f}$, $G$ to satisfy condition (\ref{very_transcendental}) and $H$ to satisfy condition (\ref{very_transcendental}) over the extension $k_0(G)$ (given by the coefficients of $G$), which we can as the field $k$ is uncountable hence has uncountable transcendence degree over its prime field $k_0$. Then for $t$ transcendental over $k$ the generic fiber $tH-Gy_0^{d'-d}x_0^{f'-f}$ satisfies condition (\ref{very_transcendental}) over $k_0$, hence is smooth and integral. Then up to base change (see for example \cite[Proof of Lem. 3.8]{LangeSchreieder2024}) we may assume that $\Tor^\Lambda(X,W)=\Tor^\Lambda(\bar X,W_{\bar X})$. 
Let $Y^\circ:=Y\setminus W_Y$ and $X^\circ:=X\setminus W_{X}$.
 Let $A=\roi_{\cal X,\eta}$, where $\eta$ is the generic point of $Y^\circ$. Then the specialisation map
$$sp_{Y^\circ}:\CH_0(X^\circ_{K(X)})\to \CH_0(Y^\circ_{k(Y^\circ)})$$
 maps the diagonal point $\delta_X$ to the diagonal point of $Y^\circ$.
This implies the claim that $\Tor^\Lambda(\{G=0\},\{y_0x_0y_1x_1\})=\Tor^\Lambda(Y,W)|\Tor^\Lambda(X,W)$.
\end{proof}
\begin{remark}
    Proposition \ref{proposition_raise_degree} is a relative version of \cite[Lem. 2.4]{Totaro2016} based on \cite[Lem. 3.8]{LangeSchreieder2024}.
\end{remark}
We will prove the remaining, and more difficult, cases, raising the dimension, of Theorem \ref{theorem_raise_deg_dim} using the following ``double cone" construction. 
\begin{definition}\label{Def:double_cone_for_deg_dim}
 Let $k$ be an algebraically closed field of characteristic different from two and let $R=k[t]_{(t)}$. 
Consider the bidegree $(2,0)$ polynomial
\begin{equation*}
    \{F:=ab+ty_0^2=0\}\subset \Proj R[y_0,...,y_{n-r},a,b]\times \Proj R[x_0,...,x_{r+1}]
\end{equation*}
and for $d'= d+1\geq 3$ and $f\geq 2$ denote the bidegree $(d,f)$ polynomial 
\begin{equation*}
    \{H:=G+y_0^{d-1}x_0^{f}a+y_1^{d-1}x_1^{f}b=0\} \subset \Proj k[y_0,...,y_{n-r},a,b]\times \Proj k[x_0,...,x_{r+1}].
\end{equation*}
for $G$ a bidegree $(d,f)$ hypersurface in $\Proj k[y_0,...,y_{n-r}]\times \Proj k[x_0,...,x_{r+1}]$. Let
\begin{equation*}
    \cal X:=\{H=F=0\}\subset \Proj R[y_0,...,y_{n-r},a,b]\times \Proj R[x_0,...,x_{r+1}]
\end{equation*}
be the degeneration given by the complete intersection of the bidegree hypersurfaces defined by the vanishing of $H$ and $F$ in $\bb P_R^{n-r+2}\times \bb P_R^{r+1}$.
Let $K=\mathrm{Quot}(R)$. We denote the generic fiber by $X=\cal X\times K$. 
The special fiber $Y=\cal X\times _R k$ has the following two components:
    \begin{align*}
        Y_0:=\{G+y_0^{d-1}x_0^{f}a=0\}\subset \{b=0\}=\bb P_k^{n-r+1}\times \bb P^{r+1}_k,\\
        Y_1:=\{G+y_1^{d-1}x_1^{f}b=0\}\subset \{a=0\}=\bb P_k^{n-r+1}\times \bb P^{r+1}_k.
    \end{align*}
    The intersection $Z:=Y_0\cap Y_1$ is the bidegree $(d,f)$ hypersurface
\begin{equation*}
    Z:=\{G=0\}\subset \{a=b=0\}= \bb P^{n-r}_k\times \bb P^{r+1}_k.
\end{equation*}
\end{definition}

\begin{lemma}\label{lemma_generic_fiber_integral_birational}
Assume that $G$ satisfies the condition (\ref{very_transcendental}) of Lemma \ref{Lemma:very_general_examples}, then the geometric generic fiber $\bar X:= X_{\bar K}$, given by 
   \begin{equation*}
      \{H=F=0\}\subset \Proj \bar{K}[y_0,...,y_{n-r},a,b]\times \Proj \bar{K}[x_0,...,x_{r+1}],
   \end{equation*}
   is integral and birational to the bidegree $(d',f)$ hypersurface
   \begin{equation*}
      X':= \{Ga+y_0^{d-1}x_0^{f}a^2-ty_1^{d-1}x_1^{f}y_0^2=0\}\subset \Proj \bar{K}[y_0,...,y_{n-r},a]\times \Proj \bar{K}[x_0,...,x_{r+1}].
      \end{equation*}
\end{lemma}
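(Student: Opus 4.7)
The plan is to (a) reduce $\bar X$ to $X'$ by eliminating $b$ via the relation $F = 0$, (b) verify integrality of $X'$ through an irreducibility check on its defining polynomial, and (c) transport integrality back to $\bar X$ using a codimension estimate on $\{a=0\}$ combined with the complete intersection property.

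For (a), I would work on the open $D_+(a) \cap \bar X$. There the equation $F = ab + ty_0^2 = 0$ allows one to solve $b = -ty_0^2/a$. Substituting into $H$ and clearing the factor $a$ yields exactly
\[ P_{X'} := G\,a + y_0^{d-1}x_0^f\,a^2 - t\,y_1^{d-1}x_1^f\,y_0^2, \]
the defining polynomial of $X'$. Forgetting $b$ then gives a morphism $D_+(a) \cap \bar X \to D_+(a) \cap X'$ whose inverse is the morphism determined by the elimination formula above; they are mutually inverse isomorphisms on $D_+(a)$, establishing the birational equivalence.

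For (b), I would view $P_{X'}$ as a quadratic in $a$ over the UFD $\bar K[y,x]$. Its three coefficients $y_0^{d-1}x_0^f$, $G$, and $-t y_0^2 y_1^{d-1}x_1^f$ have trivial greatest common divisor, since condition (\ref{very_transcendental}) forces $y_0 \nmid G$. By Gauss's lemma, irreducibility of $P_{X'}$ is equivalent to the discriminant
\[ \Delta := G^2 + 4t\,y_0^{d+1}y_1^{d-1}x_0^f x_1^f \]
not being a square in $\bar K[y,x]$. If $\Delta = P^2$, then $(P-G)(P+G) = 4t\,y_0^{d+1}y_1^{d-1}x_0^f x_1^f$ has right-hand side whose only prime factors in $\bar K[y,x]$ are $y_0, y_1, x_0, x_1$, so $P-G$ and $P+G$ must each be a scalar multiple of a monomial in those four variables. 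Then $2G = (P+G) - (P-G)$ would be a difference of two monomials, contradicting condition (\ref{very_transcendental}), which forces every coefficient of $G$ to be nonzero and so $G$ to have many more than two terms (since $d, f \geq 2$). Hence $P_{X'}$ is irreducible and $X'$ is integral of dimension $n+1$.

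For (c), the closed subscheme $\bar X \cap V(a)$ is contained in $V(a)\cap V(y_0)$, since $F=0$ and $t\neq 0$ force $y_0 = 0$. A direct inspection shows that $H|_{V(a)\cap V(y_0)}$ is a nonzero polynomial linear in $b$, so $\bar X \cap V(a)$ has dimension at most $n$. Since $F$ and $H$ are irreducible of distinct bidegrees, $\bar X = V(F)\cap V(H)$ is a codimension-two complete intersection in the smooth ambient, of dimension $n+1$. Therefore $\bar X$ is the closure of its integral dense open $D_+(a)\cap \bar X$, hence irreducible of dimension $n+1$. As a complete intersection of the expected codimension in a smooth ambient, $\bar X$ is Cohen--Macaulay with no embedded primes, hence reduced and so integral. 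The main obstacle is step (b): ruling out that $\Delta$ is a square is the one place where the genericity assumption on $G$ must be used in an essential way, since a quadric in $a$ over $\bar K[y,x]$ is generically reducible precisely when its discriminant is a square.
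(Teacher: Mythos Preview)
Your proof is correct and follows the same three-step skeleton as the paper: eliminate $b$ on $\{a\neq 0\}$ to reach $X'$, show the quadratic in $a$ is irreducible via a discriminant argument, and then propagate integrality back to $\bar X$ using the Cohen--Macaulay property. The implementation of step (b), however, is genuinely different. The paper dehomogenizes on the chart $D_+(y_0)\times D_+(x_0)$ to obtain the monic quadratic $a^2+G_U\,a-t\,y_1^{d-1}x_1^f$, and then applies a specialization map $\varphi:A\to\bar K[x_1]$ (sending $y_1\mapsto 1$ and the remaining $y_i,x_j\mapsto 0$) to reduce the non-square check to a one-variable computation that uses two specific nonzero coefficients of $G$. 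You instead stay bihomogeneous: from $(P-G)(P+G)=4t\,y_0^{d+1}y_1^{d-1}x_0^fx_1^f$ you conclude that $G$ would be a $\bar K$-linear combination of at most two monomials, which contradicts condition~(\ref{very_transcendental}) since all $\binom{n-r+d}{d}\binom{r+1+f}{f}\ge 9$ coefficients of $G$ are nonzero. Your route is shorter and more conceptual; the paper's specialization has the compensating advantage that it adapts readily to explicit, non-generic choices of $G$, as is needed later for the Hassett--Pirutka--Tschinkel quadric in Lemma~\ref{lemma_3_2_in_3_3}. For step (c) the paper is terse (``the same arguments as for $X'$''); your equidimensionality plus no-embedded-primes argument is a correct and more explicit way to carry this out.
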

Note that a general complete intersection is integral. However, $F$ is not general and we therefore need the following more complicated arguments in order to show integrality.
\begin{proof}
On the open set $\{a\neq 0\}\subset\bar{X}$ we have $ b=\frac{-{ty_0^2}}{a} $ which we substitute in $H$ and then homogenise the equation. This shows that $\bar{X}$ is birational to $X'$. We show that $X'$ is integral. 
Work on the principal open $U:=D_+(y_0)\times D_+(x_0)$,
and dehomogenize by $y_0=1=x_0$. Write
\[
y_i:=\frac{y_i}{y_0}\ (i\ge 1),\qquad x_j:=\frac{x_j}{x_0}\ (j\ge 1),\qquad a:=\frac{a}{y_0}.
\]
Let $G_U$ be the dehomogenization of $G$ on $U$ and set $A:=\bar K[y_1,\dots,y_{n-r},x_1,\dots,x_{r+1}]$ and $L:=\Frac(A)$.
Then on $U$ the equation of $X'$ becomes the monic quadratic $a^2+G_Ua-t\,y_1^{\,d-1}x_1^{\,f}\in A[a]$,
with discriminant
\[
\Delta:=G_U^2+4t\,y_1^{\,d-1}x_1^{\,f}\in A.
\]
If $\Delta$ were a square in $L$, then clearing denominators in the UFD $A$ shows $\Delta$ is a square in $A$. Consider the $\bar K$-algebra map $\varphi:A\to \bar K[x_1]$ given by
\[
y_1\mapsto 1,\quad y_i\mapsto 0\ (i\ge 2),\quad x_j\mapsto 0\ (j\neq 1).
\]
Since $G$ satisfies condition (\ref{very_transcendental}), we see that the coefficient of the monomials of $y_0^d x_1^f$ and $y_0^dx_1^{f-1}x_0$ in $G$ are nonzero. Assume that $\varphi(\Delta)=p(x_1)^2$ in $\bar K[x_1]$. Then $(\phi(G_U)-p(x_1))({\phi(G_U)+p(x_1)})=4t\,x_1^{\,f}$ implies that the only possible prime divisor of $p(x_1)+\phi(G_U)$ as well as of $p(x_1)-\phi(G_U)$ is $x_1$ and that the multiplicities of these add up to $f$. The equality \[
\phi(G_U)=\frac{\phi(G_U)-p(x_1)}{2}+\frac{\phi(G_U)+p(x_1)}{2}
\]
then gives a contradiction to $f\geq 2$ (otherwise $X$ would be rational). Indeed, the contradiction follows by comparing the coefficients of $x_1^f$ and $x_1^{f-1}$ in the two polynomials on both sides of the equation. 
$\phi(G_U)$ has nonzero coefficients for $x_1^f$ and $x_1^{f-1}$, but $\frac{\phi(G_U)-p(x_1)}{2}+\frac{\phi(G_U)+p(x_1)}{2}$ cannot since it is of the form $c_1x_1^{f-a}+c_2x_1^a$ with $c_1,c_2\in \bar K^\times$.

So $\Delta\notin L^{\times 2}$ and the quadratic is irreducible over $L$, hence $X'\cap U$ is integral. To see that $X'$ is integral we note that it is irreducible because every one of its components must meet $U$ and reduced because it is affine locally $S_1$ (see \cite[Lemma 10.104.2.]{stacks-project} and \cite[Lemma 28.12.3.]{stacks-project}) and $R_0$ because it is irreducible and contains a dense open integral subscheme.
Finally we can conclude that $\bar X$ is integral using the same arguments as for $X'$ as they are birational to each other.

\end{proof}
\begin{definition}\label{Def:closed_for_family_deg_dim}
    Let 
    \begin{equation*}
        W_\cal X:=\{y_0x_0y_1x_1=0\}\cap \cal X\subset \cal X.
    \end{equation*}
      Denote by $W_Y$ the special fiber and by $W_X$ (resp. $W_{\bar{X}}$) the generic (resp. geometric generic) fiber of $W_\cal X$. The complements are denoted as follows: \begin{equation*}
        Y^\circ:=Y\setminus W_Y,\,\,Y_i^\circ:=Y_i\setminus W_Y,\,\,Z^\circ:=Z\setminus W_Y,\,\,X^\circ:=X_K\setminus W_{X_K}.
    \end{equation*}
\end{definition}

\begin{lemma}\label{lemma_chow_generalcase}
    $\CH_1(Y_i^\circ\times_k L,\Lambda)=0$ for $i\in \{0,1\}$ and every field extension $L/k$.
\end{lemma}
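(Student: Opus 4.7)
The plan is a direct and almost formal application of Lemma \ref{Lemma_simple_CH_1} to each of the two components $Y_0$ and $Y_1$, followed by passage along an open immersion to cut down further to $Y_i^\circ$.

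For $Y_0 \subset \bb P^{n-r+1}_k \times \bb P^{r+1}_k$ (coordinates $y_0,\dots,y_{n-r},a$ on the first factor and $x_0,\dots,x_{r+1}$ on the second), the defining equation $G+y_0^{d-1}x_0^f\cdot a=0$ is linear in $a$, with both $G$ and the coefficient $P:=y_0^{d-1}x_0^f$ independent of $a$. Swapping the roles of the two projective factors, so that $a$ plays the role of the distinguished last variable of Lemma \ref{Lemma_simple_CH_1}, the coefficient $P$ contains both a $(1,0)$-divisor, namely $\{x_0=0\}$, and a $(0,1)$-divisor, namely $\{y_0=0\}$. The lemma therefore yields an explicit isomorphism of $Y_0\setminus\{P=0\}=Y_0\setminus\{y_0 x_0=0\}$ with an open subscheme of an affine space over $k$.

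This isomorphism is defined over $k$ and commutes with base change, so extending scalars to $L$ gives an isomorphism of $(Y_0\setminus\{P=0\})\times_k L$ with an open subscheme of affine space over $L$; since $\CH_1$ of affine space (of dimension $\geq 2$) vanishes, the localization sequence gives $\CH_1\bigl((Y_0\setminus\{P=0\})\times_k L,\Lambda\bigr)=0$. Now $Y_0^\circ=Y_0\setminus\{y_0x_0y_1x_1=0\}$ is an open subscheme of $Y_0\setminus\{y_0x_0=0\}$, so right-exactness of the localization sequence (preserved under tensoring with $\Lambda$) forces $\CH_1(Y_0^\circ\times_k L,\Lambda)=0$.

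The argument for $Y_1$ is symmetric, using $b$ as the distinguished variable and $P':=y_1^{d-1}x_1^f$ in place of $P$. There is no real obstacle: the content is purely a bookkeeping exercise checking that the equation structure of each $Y_i$ falls into the template of Lemma \ref{Lemma_simple_CH_1} after swapping factors, and that the resulting vanishing passes to the smaller open subscheme $Y_i^\circ$ by the standard localization sequence.
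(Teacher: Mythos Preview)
Your proof is correct and follows exactly the same approach as the paper: both invoke Lemma~\ref{Lemma_simple_CH_1} after observing that the coefficient of the distinguished linear variable ($a$ for $Y_0$, $b$ for $Y_1$) vanishes along $\{y_0x_0=0\}$, resp.\ $\{y_1x_1=0\}$, which is contained in $W_{\cal X}$. You have simply spelled out the swap of factors, the base change to $L$, and the passage to the smaller open $Y_i^\circ$ via localization, all of which the paper leaves implicit in its one-line proof.
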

\begin{proof}
    The claim follows from Lemma \ref{Lemma_simple_CH_1} as $\{x_0=0\}\cup \{y_0=0\}\subset W_X$.
\end{proof}
\begin{lemma}\label{Lemma_semistability_general}
    $\cal X ^\circ=\cal X-W_{\cal X}$ is strictly semistable.
\end{lemma}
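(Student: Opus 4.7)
The plan is to verify strict semistability in a single affine chart, exploiting the fact that $\cal X^\circ$ is by definition contained in the open locus $\{y_0 \neq 0,\ x_0 \neq 0\}$.

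First I would work in the chart $U = \{y_0 = 1,\, x_0 = 1\}$ of $\bb P^{n-r+2}_R \times \bb P^{r+1}_R$, with affine coordinates $y_1,\dots,y_{n-r},a,b,x_1,\dots,x_{r+1}$. Solving $H = 0$ for $a$ gives $a = -g - y_1^{d-1} x_1^f b$, where $g$ denotes the dehomogenization of $G$. Substituting into $F = ab + t$ produces the single equation $t - b\,u = 0$, where $u := g + y_1^{d-1} x_1^f b$. Hence $\cal X \cap U$ is realized as the hypersurface $V(t - bu) \subset \Spec R[y_1,\dots,y_{n-r},b,x_1,\dots,x_{r+1}]$, and regularity of $\cal X \cap U$ is immediate from the Jacobian criterion since $\partial(t - bu)/\partial t = 1$.

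Next I would analyze the special fiber in this chart: it is cut by $bu = 0$ and decomposes as $V(b) \cup V(u)$, matching $Y_0 \cap U$ and $Y_1 \cap U$ respectively. The component $V(b)$ is an affine space and hence smooth; $V(u)$ is smooth on $\cal X^\circ$ because $\partial u/\partial b = y_1^{d-1} x_1^f$ is a unit in the chart. The intersection $V(b) \cap V(u) = V(b, g)$ coincides with $Z^\circ \cap U$, and is smooth provided $G$ is chosen sufficiently generically that $Z = \{G=0\}$ is smooth on $\{y_0 x_0 y_1 x_1 \neq 0\}$—a mild hypothesis on $G$ that one may arrange from the start of the construction, just as in the Küchle argument of Section~\ref{Section_Küchle_varieties}.

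The main obstacle will be verifying the canonical local étale model at a closed point $p \in Z^\circ$: one needs $b$ and $u$ to form part of a regular system of parameters. Since $g$ depends only on $(y_i, x_j)$ and not on $b$, we have $du \equiv dg \pmod{db}$, so $db$ and $du$ project to linearly independent vectors in $\mathfrak{m}_p/\mathfrak{m}_p^2$ iff $dg \neq 0$ at $p$—which is precisely the smoothness of $Z$ at $p$ arranged above. Given this, an étale change of variable replacing $(b,u)$ by local coordinates $(z_0, z_1)$ brings the equation into the canonical form $z_0 z_1 = t$, the standard strictly semistable model with two components meeting transversally. At a point of $Y_i^\circ \setminus Z^\circ$ exactly one of $b, u$ is a unit, so the equation locally defines a smooth $R$-scheme. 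Combining these observations, $\cal X^\circ$ is strictly semistable.
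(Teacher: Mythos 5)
Your proof is correct and follows the same basic strategy as the paper's: restrict to the chart $D_+(y_0)\cap D_+(x_0)$ (forced by the definition of $W_{\cal X}$), dehomogenize, and run Jacobian-type computations. The one genuine difference is that you eliminate $a$ via $h=0$ and reduce $\cal X\cap U$ to the single hypersurface $t=bu$, whereas the paper keeps both equations $f=ab+t$, $h=G_U+a+y_1^{d-1}x_1^fb$ and exhibits a unit $2\times 2$ minor of the Jacobian; your reduction has the advantage of making the standard strictly semistable local model $z_0z_1=t$ visible immediately. You are also more thorough at the double locus: the paper checks only that $Y_0^\circ$ and $Y_1^\circ$ are smooth and then asserts that $Y^\circ$ is snc, while you explicitly verify transversality along $Z^\circ$ by showing $db$ and $du$ stay independent in $\mathfrak{m}_p/\mathfrak{m}_p^2$, which correctly isolates the needed condition $dg\neq 0$, i.e.\ smoothness of $Z^\circ=\{G=0\}\setminus W$. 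That hypothesis is indeed required (two smooth divisors in a regular scheme need not cross normally) and is left implicit in the paper's proof; it does hold in all the paper's applications, where $G$ is either very general (hence smooth) or has singular locus contained in $W$, so your explicit flagging of it is a genuine, if small, improvement rather than a detour.
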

\begin{proof}
Recall that $\cal X\subset \bb P_R^{n-r+2}\times \bb P_R^{r+1}$ is the complete intersection
\[
\cal X=\{F=0\}\cap\{H=0\},
\qquad
F=ab+ty_0^2,
\qquad
H=G+y_0^{d-1}x_0^f a+y_{1}^{d-1}x_1^f b,
\]
and that $W_{\cal X}=\{y_0x_0y_1x_1=0\}$. On $\cal X^\circ$ we have $y_0x_0y_1x_1\neq 0$, hence
$\cal X^\circ\subset D_+(y_0)\cap D_+(x_0)$. On the affine chart
\[
U:=\cal X\cap D_+(y_0)\cap D_+(x_0)
\]
we dehomogenize by $y_0=1=x_0$. Writing (by abuse of notation) the remaining affine coordinates as
\[
y_i=\frac{y_i}{y_0}\ (i\ge 1),\qquad x_j=\frac{x_j}{x_0}\ (j\ge 1),
\]
the equations of $\cal X\cap U$ become
\[
f:=ab+t=0,\qquad h:=G_U+a+y_1^{d-1}x_1^f\,b=0,
\]
where $G_U$ denotes the dehomogenization of $G$ on $U$.

We first check that $\cal X^\circ$ is regular. The Jacobian matrix of $(h,f)$ has the $2\times 2$ minor in the
columns $(a,t)$ equal to
\[
\det\begin{pmatrix}
\frac{\partial h}{\partial a} & \frac{\partial h}{\partial t}\\[2pt]
\frac{\partial f}{\partial a} & \frac{\partial f}{\partial t}
\end{pmatrix}
=
\det\begin{pmatrix}
1 & 0\\
b & 1
\end{pmatrix}
=1,
\]
so $\Jac(h,f)$ has rank $2$ everywhere on $\cal X\cap U$, hence $\cal X^\circ$ is regular. The special fiber $Y=\cal X\times_R k$ is given by $\{t=0\}$, and on $\cal X^\circ$ the relation $f=ab+t$ yields
\[
Y^\circ=\{t=0\}=\{ab=0\}=\{a=0\}\cup\{b=0\},
\]
so $Y^\circ=Y_0^\circ\cup Y_1^\circ$ with $Y_0^\circ=\{b=0\}$ and $Y_1^\circ=\{a=0\}$, both Cartier divisors on the
regular scheme $\cal X^\circ$. We check that $Y_0^\circ$ and $Y_1^\circ$ are smooth. On $Y_0^\circ$ we have $b=0$ and the remaining defining
equation is $h|_{b=0}=G_U+a=0$, with
\[
\frac{\partial}{\partial a}(h|_{b=0})=1\neq 0
\]
on $\cal X^\circ$, hence $Y_0^\circ$ is smooth. Similarly, on $Y_1^\circ$ we have $a=0$ and the remaining defining
equation is $h|_{a=0}=G_U+y_1^{d-1}x_1^f b=0$, with
\[
\frac{\partial}{\partial b}(h|_{a=0})=y_1^{d-1}x_1^f\neq 0
\]
on $\cal X^\circ$ since $y_1x_1\neq 0$ there. Therefore $Y^\circ$ is a strict normal crossings divisor on $\cal X^\circ$.
We can conclude that the morphism $\cal X^\circ\to \Spec(R)$ is strictly semistable \cite[Definition 1.1]{Hartl2001}.
\end{proof}

\begin{proof}[Proof of Theorem \ref{theorem_raise_deg_dim}]
By \cite[Lem. 2.13.(ii)]{LangeZhang} we can assume that $k$ is algebraically closed and uncountable. By Lemma \ref{Lemma:very_general_relative_tor} there exists a very general bidegree $(d,f)$ hypersurface $$\{G=0\}\subset \bb P^{n-r}_k\times \bb P^{r+1}_k=\Proj k[y_0,...,y_{n-r}]\times \Proj k[x_0,...,x_{r+1}]$$
such that $\Tor^{\Lambda}(\{G=0\},\{y_{0}x_{0}y_{1}x_{1}=0\})\neq 1$. We can now apply the construction of Definition \ref{Def:double_cone_for_deg_dim} and we obtain a family $\cal X$ over $R$ such that the intersection of the two components of the special fiber is $Z=\{G=0\}$. 
Set $W_\cal X$ as in Definition \ref{Def:closed_for_family_deg_dim}. Since $\cal X^\circ$ is strictly semistable by Lemma \ref{Lemma_semistability_general} and the generic fiber $X$ is geometrically integral by Lemma \ref{lemma_generic_fiber_integral_birational}, the assumptions of Theorem \ref{Theorem_LS_4.3} are satisfied and for $L=k(Z^\circ)$, loc. cit. tells us that the cokernel of the obstruction map 
$$\Psi_{Y_L^\circ}^\Lambda:\CH_1(Y_0^\circ\times_k L,\Lambda)\oplus \CH_1(Y_1^\circ\times_k L,\Lambda)\to \CH_0(Z^\circ\times_k L,\Lambda)$$
    is $\Tor^\Lambda(\bar X^{},W_{\bar{X}})$-torsion. 
    By Lemma \ref{lemma_chow_generalcase} we have that $\mathrm{coker}(\Psi_{Y_L^\circ}^\Lambda)=\CH_0(Z^\circ\times_k L,\Lambda)$, hence 
    $$ 1<\mathrm{ord} (\delta_{Z^\circ})\mid  \Tor^\Lambda(\bar X^{},W_{\bar{X}}).$$ Via the birational isomorphism of Lemma \ref{lemma_generic_fiber_integral_birational}, this implies that also $$1<\Tor^\Lambda( \bar X,\{y_0x_0y_1x_1=0\})=\Tor^\Lambda( X',\{y_0x_0y_1x_1a=0\}).$$ For this note that on the open $\{y_0\neq 0\}$ in the generic fiber we always have $a\neq 0$. As $X'$ is integral, the proof concludes with an application of Lemma \ref{Lemma:very_general_decomposition}.
\end{proof}

\section{Raising dimension without raising degree:Sixfolds of bidegree $(3,2)$ in $\bb P^4\times \bb P^3$}\label{Sect:Raising_dimension_without_raising_degree}

In this section we prove the following theorem:

\ThreeTwoTheorem*

The strategy of the proof of Theorem \ref{theorem_3_2_in_4_3} is as follows:
firstly, as always
\begin{enumerate}
    \item to specialise (in the more lax sense which includes lower dimensional strata) a $(3,2)$-$(2,0)$-complete intersection in $\bb P^5_R\times \bb P^3_R$ to a bidegree $(3,2)$ hypersurface (also called reference variety in the following) in $\bb P^3_k\times \bb P^3_k$ which does not admit a decomposition of the diagonal. Secondly,
\item find a reference bidegree $(3,2)$ hypersurface in $\bb P^3_k\times \bb P^3_k$ such that it is possible to increase the dimension of the bidegree hypersurface without increasing its degree by applying a ``substitution and coordinate change" to the generic fiber resulting from Step (i). This does not quite work if we specialise to $X'$ of the following lemma. Instead, we specialise to a variety which is birational to $X'$ and for which the substitution and coordinate change works (see Lemma \ref{lemma_substitute_coordinate_change}).
\end{enumerate}

From here on, let $k$ be an algebraically closed field of characteristic different from two.
\begin{lemma}\label{lemma_3_2_in_3_3}
    Let $Q$ be as in Example \ref{example_HPT}. Then the $k$-variety
     \begin{equation*}
      X':= \{Qa+y_1x_0^2a^2-ty_1x_2^2y_0^2=0\}\subset \Proj k[y_0,y_1,y_2,a]\times \Proj k[x_0,x_1,x_2,x_3]
      \end{equation*}
    does not admit a decomposition of the diagonal with respect to the closed subscheme 
    \begin{equation*}
        W_{X'}:=\{y_0y_1y_2x_0x_1x_2x_3a=0\}\subset X'.
    \end{equation*}
\end{lemma}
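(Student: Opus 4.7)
The plan is to mimic the proof of Theorem \ref{theorem_raise_deg_dim} with the specific reference variety $G = Q$ (the HPT quadric), tailoring the monomial multipliers in the double cone of Definition \ref{Def:double_cone_for_deg_dim} so that the birational model furnished by Lemma \ref{lemma_generic_fiber_integral_birational} matches the $X'$ of the statement. Concretely, I would set $R := k[t]_{(t)}$ and consider
\[
F := ab + t y_0^2, \qquad H := Q + y_1 x_0^2\, a + y_1 x_2^2\, b,
\]
with $a, b$ of bidegree $(1,0)$, and let $\cal X := \{F = H = 0\} \subset \bb P^4_R \times \bb P^3_R$. Substituting $b = -t y_0^2/a$ into $H$ and clearing denominators by $a$ recovers exactly the defining equation of $X'$, so the geometric generic fiber $\bar X$ is birational to $X'$. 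The special fiber $Y$ splits as $Y_0 \cup Y_1$ (the components $b = 0$ and $a = 0$), meeting along $Z = \{a = b = Q = 0\} \cong \{Q = 0\} \subset \bb P^2_k \times \bb P^3_k$.

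Setting $W_\cal X := \{y_0 y_1 y_2 x_0 x_1 x_2 x_3 a b = 0\} \cap \cal X$, the arguments of Lemma \ref{Lemma_semistability_general} carry over essentially verbatim: on the chart $D_+(y_1) \cap D_+(x_0)$ the $(a,t)$-minor of the Jacobian of $(H, F)$ equals $y_0^2$, a unit on $\cal X^\circ$, and the components $Y_0^\circ$ and $Y_1^\circ$ are smooth since the $a$-partial of $Q_U + a$ is $1$ and the $b$-partial of $Q_U + x_2^2 b$ is $x_2^2 \neq 0$ on $\cal X^\circ$. Since the multipliers $y_1 x_0^2$ and $y_1 x_2^2$ each cut out both a $(1,0)$- and a $(0,1)$-divisor contained in $W_\cal X$, Lemma \ref{Lemma_simple_CH_1} gives $\CH_1(Y_i^\circ \times_k L, \Lambda) = 0$ for every field extension $L/k$.

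The main obstacle is the integrality of $\bar X$ (equivalently of $X'$), since $Q$ is fixed rather than very general and the criterion \eqref{very_transcendental} used in Lemma \ref{lemma_generic_fiber_integral_birational} does not apply directly. I would instead dehomogenize $X'$ on $D_+(y_1) \cap D_+(x_0)$, obtaining the monic quadratic
\[
a^2 + Q_U\, a - t x_2^2 y_0^2 = 0
\]
over $A := \bar K[y_0, y_2, x_1, x_2, x_3]$ with discriminant $\Delta := Q_U^2 + 4 t x_2^2 y_0^2$, and specialize via the $\bar K$-algebra map $\varphi : A \to \bar K[x_2]$ sending $y_0 \mapsto 1$ and $y_2, x_1, x_3 \mapsto 0$. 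Using the explicit form of $Q$, one computes $\varphi(Q_U) = x_2^2$ and $\varphi(\Delta) = x_2^2(x_2^2 + 4t)$; the factor $x_2^2 + 4t$ is squarefree in $\bar K[x_2]$ since its roots $\pm 2\sqrt{-t}$ are distinct, so $\Delta \notin A^{\times 2}$ and the quadratic is irreducible. Integrality of $X' \cap D_+(y_1) \cap D_+(x_0)$, and then of $X'$ itself, follows from the $S_1$-plus-$R_0$ globalization as in Lemma \ref{lemma_generic_fiber_integral_birational}, and $\bar X$ inherits integrality via the birational equivalence.

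With all the hypotheses of Theorem \ref{Theorem_LS_4.3} in place, taking $L = k(Z^\circ)$ yields that
\[
\CH_0(Z^\circ \times_k L, \Lambda) = \mathrm{coker}\bigl(\Psi_{Y_L^\circ}^\Lambda\bigr)
\]
is $\Tor^\Lambda(\bar X, W_{\bar X})$-torsion. Since $\mathrm{ord}(\delta_{Z^\circ}) = \Tor^\Lambda(\{Q = 0\}, W_Z) = 2$ by Proposition \ref{proposition_rel_tor_ord_HPT}, this forces $\Tor^\Lambda(\bar X, W_{\bar X}) > 1$. The birational map of Lemma \ref{lemma_generic_fiber_integral_birational} is an isomorphism over $\{a \neq 0\}$, and $\{a = 0\} \subset W_{X'}$, so this translates to $\Tor^\Lambda(X', W_{X'}) > 1$ as required.
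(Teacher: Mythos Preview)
Your approach is the paper's: run the double-cone construction with $G=Q$ and the tailored monomials $H=Q+y_1x_0^2a+y_1x_2^2b$, then redo the integrality of $\bar X$ (equivalently $X'$) by hand since $Q$ fails condition~\eqref{very_transcendental}. Your discriminant computation on $D_+(y_1)\times D_+(x_0)$ and the specialization $\varphi(\Delta)=x_2^2(x_2^2+4t)$ are exactly the paper's argument.

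There is one genuine slip: your $W_{\cal X}=\{y_0y_1y_2x_0x_1x_2x_3ab=0\}$ must \emph{not} contain the factor $ab$. On the special fiber the relation $F=ab+ty_0^2=0$ with $t=0$ forces $ab=0$, so $Y\subset\{ab=0\}\subset W_{\cal X}$ and hence $Y^\circ=Y_0^\circ=Y_1^\circ=Z^\circ=\emptyset$. Theorem~\ref{Theorem_LS_4.3} then asserts only that the zero group is $\Tor^\Lambda(\bar X,W_{\bar X})$-torsion, which is vacuous, and your identification $\mathrm{ord}(\delta_{Z^\circ})=2$ becomes meaningless. The fix is to take $W_{\cal X}=\{y_0y_1y_2x_0x_1x_2x_3=0\}$ as the paper does; on the \emph{generic} fiber this changes nothing because there $ab=-ty_0^2$ with $t\neq 0$ already gives $\{a=0\}\cup\{b=0\}\subset\{y_0=0\}$, so your semistability check (the minor $y_0^2$), the $\CH_1$-vanishing, and the final passage from $\Tor^\Lambda(\bar X,W_{\bar X})$ to $\Tor^\Lambda(X',W_{X'})$ all remain valid. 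With this correction the proof is complete and identical to the paper's.
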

\begin{proof}
    This follows from the arguments of Section \ref{Section:maintheorem} with $d=f=2$, $d'=3$, $G=Q$ and $Q$ as in  
    Example \ref{example_HPT} and $W=\{y_0y_1y_2x_0x_1x_2x_3\}$ (note that the proof of the last section goes through verbatim with this $W$ if we assume $n\geq r+1\geq 2$). The only difference is that $Q$ does not satisfy condition (\ref{very_transcendental}) of Lemma \ref{Lemma:very_general_examples} and therefore Lemma \ref{lemma_generic_fiber_integral_birational} cannot be applied. We prove the statement of Lemma \ref{lemma_generic_fiber_integral_birational} in this case. Work on the principal open
\[
U:=D_+(y_1)\times D_+(x_0)\subset \bb P^3_{\bar K}\times \bb P^3_{\bar K},
\]
and dehomogenize by $y_1=1=x_0$. Set $A:=\bar K[y_0,y_2,x_1,x_2,x_3]$, $ L:=\Frac(A)$, and let $Q_U:=Q(y_0,1,y_2;1,x_1,x_2,x_3)\in A$. Then on $U$ the equation of
$X'$ becomes the monic quadratic
\[
f(a):=a^2+Q_Ua-tx_2^2y_0^2\in A[a].
\]
Its discriminant is
\[
\Delta:=Q_U^2+4t\,x_2^2y_0^2\in A.
\]
To prove that $f$ is irreducible in $A$, it suffices to show $\Delta\notin L^{\times 2}$. Now apply the $\bar K$-algebra map
\[
\varphi:A\to \bar K[x_2],\qquad
y_0\mapsto 1,\ \ y_2\mapsto 0,\ \ x_1\mapsto 0,\ \ x_3\mapsto 0.
\]
For $Q$ as in Example \ref{example_HPT} one has $\varphi(Q_U)=x_2^2$, hence $\varphi(\Delta)=x_2^4+4t\,x_2^2=x_2^2(x_2^2+4t)$. Let $\beta\in\bar K$ satisfy $\beta^2+4t=0$; since $\operatorname{char}(\bar K)\neq 2$, the factor
$x_2^2+4t$ has a simple zero at $x_2=\beta$, so the valuation $v_\beta$ on
$\bar K(x_2)$ gives $v_\beta(\varphi(\Delta))=1$, which is odd. Thus
$\varphi(\Delta)$ is not a square in $\bar K(x_2)$.
\end{proof}

The birational isomorphism in Step (ii) is given by the following lemma.
It is inspired by \cite[p. 2]{HassetPirutkaTschinkel2019} where  the argument is called ``multiply through and absorb the squares". We note that this argument is also used in \cite[Example 4.6]{LangeSchreieder2024}.
    \begin{lemma}\label{lemma_absorb_the_squares}
        The bidegree $(3,2)$ hypersurface of $\Proj k[y_0,y_1,y_2,a]\times \Proj k[x_0,x_1,x_2,x_3]$ defined by 
   \begin{equation*}
      X':= \{F':=Qa+y_1x_0^2a^2-ty_1x_2^2y_0^2=0\}
      \end{equation*}
      is birational to the $(3,2)$ hypersurface of $\Proj k[y_0,y_1,y_2,a]\times \Proj k[x_0,x_1,X_2,x_3]$ defined by 
       \begin{equation*}
      V:= \{G:=y_1^2(y_2+a)x_0^2+y_0y_2y_1x_1^2+ay_0(a-ty_0)X_2^2+y_1F(y_0,y_1,y_2)x_3^2=0\}
      \end{equation*}
      where $F$ is as in Example \ref{example_HPT}.
    \end{lemma}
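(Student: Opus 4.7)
The plan is to verify the birational equivalence by the classical ``multiply through and absorb the squares'' manipulation, followed by writing down the explicit rational map and its inverse.

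\smallskip

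First I would expand $F'=Qa+y_1x_0^2a^2-ty_1x_2^2y_0^2$ by distributing $Qa$ over the four monomials of $Q$ and collecting by the $x_i^{\,2}$. A direct calculation gives
\[
F' = y_1 a(y_2+a)\,x_0^2 \;+\; a y_0 y_2\,x_1^2 \;+\; y_0 y_1 (a - t y_0)\,x_2^2 \;+\; aF\,x_3^2.
\]
Comparing term by term with $G$, the coefficients of $x_0^2$, $x_1^2$, and $x_3^2$ in $F'$ differ from those of $G$ by the common scalar $a/y_1$, whereas the coefficient of $x_2^2$ differs by $y_1^2/a^2$ in the opposite direction. This mismatch is precisely what the ``absorb the squares'' trick corrects.

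\smallskip

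Second, I would work on the dense open subset $U:=X'\cap\{a y_1\neq 0\}$, where multiplying $F'$ by the non-zero scalar $y_1/a\in k(X')^{\times}$ does not change the vanishing locus. The resulting equation has coefficients $y_1^2(y_2+a)$, $y_0 y_1 y_2$, $y_0 y_1^2 (a-ty_0)/a$, and $y_1 F$ for $x_0^2,x_1^2,x_2^2,x_3^2$. Now I would apply the substitution $X_2 = x_2\, y_1/a$; this transforms the $x_2^2$ coefficient into $y_0 y_1^2(a-ty_0)/a \cdot a^2/y_1^2 = a y_0(a-t y_0)$, matching $G$ exactly. At this point the equations coincide.

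\smallskip

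Third, to upgrade this to a birational map between projective varieties, I would define
\[
\phi\colon X' \dashrightarrow V, \qquad
\bigl((y_0{:}y_1{:}y_2{:}a);(x_0{:}x_1{:}x_2{:}x_3)\bigr)
\longmapsto
\bigl((y_0{:}y_1{:}y_2{:}a);(x_0 a : x_1 a : x_2 y_1 : x_3 a)\bigr),
\]
regular on the open $\{ay_1\neq 0\}$, together with its candidate inverse
\[
\psi\colon V \dashrightarrow X', \qquad
(x_0{:}x_1{:}X_2{:}x_3) \longmapsto (x_0 y_1 : x_1 y_1 : X_2 a : x_3 y_1),
\]
regular on the analogous open of $V$. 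By the previous step, $\phi$ lands in $V$, and $\psi$ lands in $X'$ by reversing the calculation. Both compositions multiply the $x$-coordinates by $ay_1$, hence act as the identity in projective coordinates on the corresponding dense opens.

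\smallskip

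The only non-trivial part is the bookkeeping of the polynomial expansion, which is mechanical once the substitution $X_2 = x_2 y_1/a$ is identified. There is no real obstacle; the inspiration for the substitution, as the authors note, comes from \cite{HassetPirutkaTschinkel2019} and \cite[Example 4.6]{LangeSchreieder2024}.
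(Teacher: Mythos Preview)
Your proof is correct and follows essentially the same ``multiply through and absorb the squares'' approach as the paper; the paper dehomogenizes by setting $a=x_3=1$ and then substitutes $X_2=y_1x_2$, which on that chart is exactly your $X_2=x_2y_1/a$, while you stay homogeneous and package the result as an explicit pair of mutually inverse rational maps. The one small omission is that you do not check that the open $\{ay_1\neq 0\}\cap V$ is dense in $V$; the paper closes this by observing that $V$ is integral (it is Cohen--Macaulay, hence equidimensional, and $\{ax_3y_1=0\}$ contains no component since $X'$ is integral), which is needed to conclude genuine birationality rather than just an isomorphism of opens.
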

\begin{proof}
    Consider the dehomogenisation of the equations defining $X'$ and $V$ with respect to $a$ and $x_3$ (i.e. the local equations in the open $D_+(a)\times D_+(x_3)$ of $\bb P^3_k\times \bb P^3_k$); we note that this is where we use the extra coordinate $a$ which we do not have for $(2,2)$-hypersurfaces in $\bb P^2_k\times \bb P^3_k$): setting $a=1=x_3$ in $F'$, we get
\begin{align*}
f:=F'(y_1,y_2,y_3,1,x_0,x_1,x_2,1)&=y_1y_2x_0^2+y_0y_2x_1^2+y_0y_1x_2^2+ y_1x_0^2-ty_1x_2^2y_0^2+F(y_0,y_1,y_2)\\
&=y_1(y_2+1)x_0^2+(y_0y_2)x_1^2+y_0y_1(1-ty_0)x_2^2+F(y_0,y_1,y_2).
\end{align*}
Setting $a=1=x_3$ in $G$, we get
$$g:=G(y_1,y_2,y_3,1,x_0,x_1,X_2,1)=y_1^2(y_2+1)x_0^2+y_0y_2y_1x_1^2+y_0(1-ty_0)X_2^2+y_1F(y_0,y_1,y_2).$$
    We now multiply $f$ through by $y_1$ and absorb the squares $y_1^2$ into $x_2^2$, i.o.w. we consider the isomorphism
    \begin{align*}
      \phi:\bar{K}[y_0,y_1,y_2,x_0,x_1,X_2,x_3,\frac{1}{y_1}] & \overset{\cong }{\to} \bar{K}[y_0,y_1,y_2,x_0,x_1,x_2,x_3,\frac{1}{y_1}] \\
      X_2&\to y_1 x_2
    \end{align*}
    which sends $\phi(g)=y_1f$. This implies that $X'$ and $V$ are isomorphic on the open subsets cut out by $a,x_3,y_1\neq 0$ and therefore birational. Finally, note that since $X'$ is integral by Lemma \ref{lemma_3_2_in_3_3}, $V$ is also integral since $\{ax_3y_1=0\}$ does not contain a component of $V$ and $V$ is Cohen-Macaulay and therefore equidimensional.

\end{proof}

\begin{corollary}
    $V$ does not admit a decomposition of the diagonal relative to the closed subset
    $$W=\{y_0y_1y_2x_0x_1X_2x_3a=0\}.$$
\end{corollary}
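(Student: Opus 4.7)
The plan is to transport the non-triviality of the relative torsion order from $X'$, established in Lemma \ref{lemma_3_2_in_3_3}, to $V$ via the birational isomorphism $\phi$ constructed in Lemma \ref{lemma_absorb_the_squares}. The crucial observation is that $\phi$ does not merely yield a birational equivalence; it restricts to a $k$-scheme isomorphism between the open complements $V\setminus W$ and $X'\setminus W_{X'}$.

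To verify this, recall that $\phi$ is defined on the locus $\{a\,x_3\,y_1\neq 0\}$ and is given by the substitution $x_2\mapsto X_2/y_1$, while fixing all other coordinates. Since $y_1$ is invertible there, the conditions $X_2\neq 0$ and $x_2\neq 0$ become equivalent after restriction, and $\phi$ thus induces an isomorphism
\[
V\setminus W = V \cap \{y_0 y_1 y_2 x_0 x_1 X_2 x_3 a \neq 0\} \;\xrightarrow{\;\sim\;}\; X' \cap \{y_0 y_1 y_2 x_0 x_1 x_2 x_3 a \neq 0\} = X'\setminus W_{X'}.
\]

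Because relative torsion orders depend only on the open complement of the closed subset in question (Remark \ref{Remark_LS_3.4}), and the induced identification of function fields sends the diagonal class of $V$ to that of $X'$, one obtains
\[
\Tor^{\bb Z/2}(V, W) = \Tor^{\bb Z/2}(V\setminus W, \emptyset) = \Tor^{\bb Z/2}(X'\setminus W_{X'}, \emptyset) = \Tor^{\bb Z/2}(X', W_{X'}) > 1,
\]
the last inequality being Lemma \ref{lemma_3_2_in_3_3}. Combined with the divisibility in Lemma \ref{Lemma_LS_3.6}(a) this yields $\Tor^{\bb Z}(V, W) > 1$, which is the claim. No genuine obstacle arises here: $\phi$ was designed precisely so that the factor of $y_1$ introduced by the substitution $X_2\mapsto y_1 x_2$ is invertible on the complement of $W$, which is what upgrades a birational equivalence into an isomorphism on the relevant opens.
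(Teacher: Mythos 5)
Your proof is correct and follows the same route as the paper, whose proof is just the one-line observation that the claim follows from Lemmas \ref{lemma_absorb_the_squares} and \ref{lemma_3_2_in_3_3}. You have simply spelled out the implicit detail — that the substitution $X_2\mapsto y_1x_2$ identifies $V\setminus W$ with $X'\setminus W_{X'}$ because $y_1$ is invertible on both complements, so the relative torsion orders coincide by Remark \ref{Remark_LS_3.4} — which is exactly the intended argument.
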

\begin{proof}
The claim follows by Lemma $\ref{lemma_absorb_the_squares}$ and Lemma \ref{lemma_3_2_in_3_3}.
\end{proof}

\begin{definition}
Let $s\neq t$ be a transcendental element over $k$, denote $S=k[s]_{(s)}$. We define the bidegree $(2,0)$ polynomial
\begin{equation*}
    F:=cd+sy_1^2=0\in  S[y_0,y_1,y_2,a,x_0,x_1,X_2,x_3,c,d]
\end{equation*}
and the bidegree $(3,2)$ polynomial
\begin{equation*}
    H:=G+y_1^2x_1^2c+y_0^2x_0^2d \in  S[y_0,y_1,y_2,a,c,d,x_0,x_1,X_2,x_3]
\end{equation*}
where $G$ is the bidegree $(3,2)$ polynomial defined in Lemma \ref{lemma_absorb_the_squares}.
Let
\begin{equation*}
    \cal X:=\{H=F=0\}\subset  \Proj S[y_0,y_1,y_2,a,c,d]\times \Proj S[x_0,x_1,X_2,x_3]
\end{equation*}
be the degeneration given by the complete intersection of the bidegree hypersurfaces defined by the vanishing of $H$ and $F$ in $\bb P_S^5\times \bb P_S^3$.
Let $K=\mathrm{Quot}(S)$. We denote the generic fiber by $X=\cal X\times K$ and the special fiber by $Y=\cal X\times _S k$. 
\end{definition}

The special fiber $Y$ has the following two components:
    \begin{align*}
        Y_0:=\{G+y_1^2x_1^2c=0\}\subset \{d=0\}=\bb P_k^{4}\times \bb P^3_k,\\
        Y_1:=\{G+y_0^2x_0^2d=0\}\subset \{c=0\}=\bb P_k^{4}\times \bb P^3_k.
    \end{align*}
    The intersection $V=Y_0\cap Y_1$ is the $(3,2)$ hypersurface
\begin{equation*}
    V=\{G=0\}\subset \{c=d=0\}= \bb P^{3}_k\times \bb P^3_k.
\end{equation*}
\begin{lemma}\label{lemma_substitute_coordinate_change}
The geometric generic fiber $\bar X:= X_{\bar K}$, given by 
   \begin{equation*}
      \{H=F=0\}\subset \Proj \bar{K}[y_0,y_1,y_2,a,c,d]\times \Proj \bar{K}[x_0,x_1,X_2,x_3],
   \end{equation*}
   is integral and birational to the bidegree $(3,2)$ hypersurface of $\Proj \bar{K}[y_0,y_1,y_2,a,c]\times \Proj \bar{K}[x_0,x_1,X_2,x_3]$
   \begin{equation*}
      T:= \{c^2(y_2+a)x_0^2+y_0y_2y_1x_1^2+ay_0(a-ty_0)X_2^2+y_1F(y_0,y_1,y_2)x_3^2+y_1^2x_1^2c-sy_0^2x_0^2c=0\}
      \end{equation*}
\end{lemma}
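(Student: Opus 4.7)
My plan is to follow closely the strategy of the proof of Lemma \ref{lemma_generic_fiber_integral_birational}, with one extra birational step that drops the $y$-degree of the birational image from $4$ down to $3$; this extra step is the projective analogue of the ``multiply through and absorb the squares'' move of Lemma \ref{lemma_absorb_the_squares}. The proof will split into (1) an explicit birational construction $\bar X\dashrightarrow T$, factoring through an intermediate $(4,2)$-hypersurface $\{E=0\}\subset\bb P^4_{\bar K}\times\bb P^3_{\bar K}$, and (2) an integrality argument on a suitable affine chart of $\bar X$.

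For (1), on the principal open $\{c\neq 0\}$ the equation $F=cd+sy_1^2=0$ allows me to solve $d=-sy_1^2/c$. Eliminating $d$ gives an isomorphism $\bar X\cap D_+(c)\isoto \{E=0\}\cap D_+(c)$, where
\[
E\ :=\ cG+y_1^2x_1^2c^2-sy_0^2x_0^2y_1^2
\]
is a hypersurface of bidegree $(4,2)$. To bring this down to $(3,2)$, I would introduce the birational self-map of $\bb P^4\times\bb P^3$ defined on $\{y_1c\neq 0\}$ by
\[
\phi\bigl((y_0{:}y_1{:}y_2{:}a{:}c),\,(x_0{:}x_1{:}X_2{:}x_3)\bigr)\;=\;\bigl((y_0{:}y_1{:}y_2{:}a{:}c),\,(y_1x_0{:}\,cx_1{:}\,cX_2{:}\,cx_3)\bigr).
\]
A direct expansion shows $\phi^{*}T=c\cdot E$, so $\phi$ restricts to a birational equivalence $\{E=0\}\dashrightarrow T$; composing with the previous isomorphism yields the asserted $\bar X\dashrightarrow T$.

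For (2), I would work on the affine chart $D_+(c)\times D_+(x_3)$, setting $c=x_3=1$. The equation $F=0$ becomes $d=-sy_1^2$, and substituting into $H=0$ leaves the polynomial
\[
\tilde E\ :=\ y_1^2(y_2+a-sy_0^2)x_0^2+y_1(y_0y_2+y_1)x_1^2+ay_0(a-ty_0)X_2^2+y_1F(y_0,y_1,y_2)
\]
in $A':=\bar K[y_0,y_1,y_2,a,x_0,x_1,X_2]$. Viewing $\tilde E$ as a quadratic $A_1x_0^2+A_0$ in $x_0$ with leading coefficient $A_1=y_1^2(y_2+a-sy_0^2)$ over $A:=\bar K[y_0,y_1,y_2,a,x_1,X_2]$, irreducibility over $\Frac(A)$ reduces to showing that the discriminant $-4A_1A_0$, which up to squares in $A$ equals $-(y_2+a-sy_0^2)\cdot A_0$, is not a square. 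The linear form $y_2+a-sy_0^2$ is irreducible in $A$; specializing $a\mapsto sy_0^2-y_2$ in $A_0$ leaves a nonzero polynomial, since the $X_2^2$-coefficient $(sy_0^2-y_2)y_0\bigl((sy_0^2-y_2)-ty_0\bigr)$ contributes a surviving $s^2y_0^5$ term. Hence $(y_2+a-sy_0^2)\nmid A_0$. A primitivity check (neither $y_1$ nor $y_2+a-sy_0^2$ divides $A_0$, using that the $X_2^2$-term of $A_0$ has no $y_1$) combined with Gauss's lemma then yields irreducibility of $\tilde E$ in $A'$, so $\bar X\cap D_+(c)\cap D_+(x_3)$ is integral.

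To conclude integrality of $\bar X$ itself, I would argue as at the end of the proof of Lemma \ref{lemma_generic_fiber_integral_birational}: $\bar X$ is a complete intersection in a smooth ambient, hence Cohen--Macaulay and in particular $S_1$; the integral affine chart is dense because $\bar X\cap\{c=0\}$ is forced into $\{c=y_1=0\}$ by $F|_{c=0}=sy_1^2$ and is therefore of strictly smaller dimension than $\bar X$, and $\bar X\not\subset\{x_3=0\}$ by inspection of $H$. The usual $R_0+S_1$ argument then gives irreducibility and reducedness of $\bar X$. The main obstacle is the irreducibility check for $\tilde E$: finding the right specialization to certify that the discriminant is not a square; once this is in hand, both the birational equivalence and integrality follow without further surprises.
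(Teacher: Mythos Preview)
Your argument is correct, and part~(1) is precisely what the paper does: the paper writes it as a single substitution $d=-sy_1^2/c$ followed by the coordinate change $x_0\mapsto cx_0/y_1$, which is exactly your two-step factorization through $\{E=0\}$ and the rescaling map $\phi$.

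For part~(2) you take a different but equally valid route. The paper works on $T$ in the chart $D_+(y_1)\times D_+(x_0)$, views the defining equation as a quadratic in the new variable $c$, and shows the discriminant $\Delta=(x_1^2-sy_0^2)^2-4(y_2+a)(\ldots)$ is not a square by specializing $y_0,y_2\mapsto 1$, $a,X_2,x_3\mapsto 0$ into $\bar K[x_1]$ and checking directly. You instead stay on $\bar X$ in the chart $D_+(c)\times D_+(x_3)$ and view the equation as a quadratic in $x_0$; your non-square check (the irreducible factor $y_2+a-sy_0^2$ of $A_1$ divides the discriminant to odd order because it does not divide $A_0$) is cleaner and avoids hunting for a specialization, at the cost of needing the extra primitivity verification for Gauss's lemma.

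One imprecision in your density step: asserting $\bar X\not\subset\{x_3=0\}$ is not enough to rule out a $6$-dimensional component hiding in $\{x_3=0\}$; you need $\dim(\bar X\cap\{x_3=0\})\le 5$. This is immediate from what you already have: on $D_+(c)$ you have identified $\bar X$ with the hypersurface $\{E=0\}\subset\bb P^4\times\bb P^3$, and since $x_3\nmid E$ (e.g.\ the monomial $y_1^2x_1^2c^2$ survives), $\{E=0,\,x_3=0\}$ has dimension at most $5$; combined with your bound on $\bar X\cap\{c=0\}$ this finishes the equidimensionality argument.
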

\begin{proof}
We can now apply the argument of \cite[Example 4.6]{LangeSchreieder2024} to increase the dimension without increasing the degree. 
We work on the open subset $\{cy_1\neq 0\}$. Substituting $$d=\frac{-sy_1^2}{c}$$
in $H$, we get $H=G+y_1^2x_1^2c+y_0^2x_0^2\cdot \frac{-sy_1^2}{c}$. We now make a coordinate change $x_0\mapsto \frac{cx_0}{y_1}$ and get 
$$H=c^2(y_2+a)x_0^2+y_0y_2y_1x_1^2+ay_0(a-ty_0)X_2^2+y_1F(y_0,y_1,y_2)x_3^2+y_1^2x_1^2c-sy_0^2x_0^2c.$$
As in the proof of Lemma \ref{lemma_generic_fiber_integral_birational} it is enough to prove that $T$ is integral on the open chart $U=D_+(y_1)\times D_+(x_0)$. On $U$ set $y_1=1=x_0$ and, by abuse of notation, write
\[
y_0=\frac{y_0}{y_1},\quad y_2=\frac{y_2}{y_1},\quad a=\frac{a}{y_1},\quad c=\frac{c}{y_1},\qquad
x_1=\frac{x_1}{x_0},\quad X_2=\frac{X_2}{x_0},\quad x_3=\frac{x_3}{x_0}.
\]
On $U$ the equation of $T$ has the form of a quadratic polynomial in the variable $c$:
\begin{equation}\label{eq:quad-c}
(y_2+a)\,c^2+(x_1^2-s y_0^2)\,c+\Bigl(y_0y_2x_1^2+a y_0(a-t y_0)X_2^2+F(y_0,1,y_2)x_3^2\Bigr)=0.
\end{equation}
Set
\[
A:=\bar K[y_0,y_2,a,x_1,X_2,x_3],\qquad L:=\Frac(A).
\]
Then \eqref{eq:quad-c} is a quadratic polynomial in $c$ over $L$, with discriminant
\[
\Delta:=(x_1^2-s y_0^2)^2-4(y_2+a)\Bigl(y_0y_2x_1^2+a y_0(a-t y_0)X_2^2+F(y_0,1,y_2)x_3^2\Bigr)\in A.
\]
As in Lemma \ref{lemma_generic_fiber_integral_birational}, it is enough to show that $\Delta$ is not a square in $L$. Consider the $\bar K$-algebra homomorphism $\varphi:A\to \bar K[x_1]$ given by
\[
y_0\mapsto 1,\qquad y_2\mapsto 1,\qquad a\mapsto 0,\qquad X_2\mapsto 0,\qquad x_3\mapsto 0.
\]
Then
\[
\varphi(\Delta)=(x_1^2-s)^2-4x_1^2=x_1^4-(2s+4)x_1^2+s^2\in \bar K[x_1].
\]
We claim $\varphi(\Delta)$ is not a square in $\bar K(x_1)$. Indeed, if $\varphi(\Delta)=q(x_1)^2$ with
$q(x_1)\in \bar K(x_1)$, then clearing denominators shows it is a square in $\bar K[x_1]$.
Write $q(x_1)=x_1^2+u x_1+v$ with $u,v\in \bar K$. Since $\Char(\bar K)\neq 2$ and $\varphi(\Delta)$ has no $x_1^3$ term,
we must have $u=0$, hence $q(x_1)^2=(x_1^2+v)^2=x_1^4+2v x_1^2+v^2$.
Comparing coefficients with $x_1^4-(2s+4)x_1^2+s^2$ yields $2v=-(2s+4)$, i.e. $v=-(s+2)$, hence $v^2=(s+2)^2\neq s^2$, a contradiction. Therefore $\varphi(\Delta)\notin \bar K(x_1)^{\times 2}$, so $\Delta\notin L^{\times 2}$. Thus the quadratic \eqref{eq:quad-c} is irreducible over $L$, hence irreducible in $A[c]$. 
\end{proof}
\begin{definition}
    Let 
    \begin{equation*}
        W_\cal X:=\{y_0x_0y_1x_1y_2x_2=0\}\subset \cal X. 
    \end{equation*} 
      Denote by $W_Y$ the special fiber and by $W_X$ (resp. $W_{\bar{X}}$) the generic (resp. geometric generic) fiber of $W_\cal X$. The complements are denoted as follows \begin{equation*}
        Y^\circ:=Y\setminus W_Y,\,\,Y_i^\circ:=Y_i\setminus W_Y,\,\,V^\circ:=V\setminus W_Y,\,\,X^\circ:=X_K\setminus W_X.
    \end{equation*}
\end{definition}
\begin{lemma}\label{Lemma_semistability}
    $\cal X ^\circ=\cal X-W_{\cal X}$ is strictly semistable.
\end{lemma}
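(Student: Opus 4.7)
The plan is to mimic the proof of Lemma \ref{Lemma_semistability_general} almost verbatim. The first step is to observe that $W_{\cal X}$ contains the divisors $\{y_1=0\}$ and $\{x_0=0\}$, so $\cal X^\circ$ lies entirely inside the principal affine chart $D_+(y_1)\cap D_+(x_0)\subset \bb P^5_S\times \bb P^3_S$, and a single Jacobian computation will suffice. After dehomogenising by $y_1=1=x_0$, the defining equations of $\cal X$ become
\[
f:=cd+s,\qquad h:=g+x_1^2\,c+y_0^2\,d,
\]
where $g$ is the dehomogenisation of $G$ on this chart.

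Next I would check regularity of $\cal X^\circ$ using the Jacobian criterion: the $2\times 2$ minor of $\Jac(h,f)$ in the columns $(d,s)$ equals
\[
\det\begin{pmatrix} y_0^2 & 0\\ c & 1\end{pmatrix}=y_0^2,
\]
which is invertible on $\cal X^\circ$ because $\{y_0=0\}\subset W_{\cal X}$. Hence $\Jac(h,f)$ has rank $2$ everywhere on the chart, and $\cal X^\circ$ is regular.

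Then I would identify the special fiber. The relation $cd+s=0$ forces $Y^\circ=\{s=0\}$ to split as $Y^\circ_0\cup Y^\circ_1=\{d=0\}\cup\{c=0\}$, each a Cartier divisor on the regular scheme $\cal X^\circ$. Smoothness of $Y^\circ_0$ (resp. $Y^\circ_1$) follows by restricting $h$: on $\{d=0\}$ we have $h|_{d=0}=g+x_1^2c$ with $\partial_c h|_{d=0}=x_1^2\neq 0$ on $\cal X^\circ$ because $\{x_1=0\}\subset W_{\cal X}$, and symmetrically $\partial_d h|_{c=0}=y_0^2\neq 0$. Since $c$ and $d$ can be completed to a regular system of parameters at every point of $\{c=d=0\}$ (étale locally the model is $cd=-s$), the two components meet transversally along $V^\circ=\{c=d=0\}$. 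Hence $Y^\circ$ is an snc divisor with smooth components appearing with multiplicity one, and $\cal X^\circ\to \Spec S$ is strictly semistable in the sense of \cite[Definition 1.1]{Hartl2001}.

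There is no substantive obstacle: the polynomials $F$ and $H$ together with the choice of $W_{\cal X}$ are designed precisely so that the coefficients $x_1^2$ and $y_0^2$ of $c$ and $d$ in $H$, and the coefficient $y_1^2$ of $s$ in $F$, become units on $\cal X^\circ$. The only thing to watch is to pick the correct minor of the Jacobian; by symmetry the $(c,s)$-minor with determinant $x_1^2$ would work equally well.
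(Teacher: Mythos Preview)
Your approach matches the paper's almost exactly—same chart, same dehomogenised equations, same Jacobian strategy—and you correctly note that the $(d,s)$-minor and $(c,s)$-minor are interchangeable for regularity (the paper happens to use the latter).

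There is, however, one genuine gap. You assert that $c$ and $d$ can be completed to a regular system of parameters at every point of $V^\circ$, citing ``étale locally the model is $cd=-s$'' as justification; but that étale local form is precisely the content of the SNC condition and does \emph{not} follow from regularity of $\cal X^\circ$ together with smoothness of $Y_0^\circ$ and $Y_1^\circ$ alone. (Counterexample: the scheme $\{cd+s=0,\ u^2-c-d=0\}\subset \bb A^3_{k[s]_{(s)}}$ is regular and both components $\{c=0\}$, $\{d=0\}$ of its special fibre are smooth curves, yet they are tangent and meet in the non-reduced locus $\{u^2=0\}$.) What is still needed is the smoothness of $V^\circ$ itself over $k$. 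The paper supplies this by computing
\[
\frac{\partial G_U}{\partial x_1}=2\,y_0 y_2\,x_1,
\]
which is a unit on $\cal X^\circ$ because $\{y_0 y_2 x_1=0\}\subset W_{\cal X}$. Note that the factor $y_2$ in the definition of $W_{\cal X}$ is used only at this step; your sketch never invokes it, which is a tell that the verification is incomplete.
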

\begin{proof}
On $\cal X^\circ$ we have $y_1\neq 0$ and $x_0\neq 0$, so $\cal X^\circ$ is contained in the affine chart
\[
U:=D_+(y_1)\times D_+(x_0).
\]
On $U$ set $y_1=1=x_0$ and, by abuse of notation, write the remaining affine coordinates as
\[
y_0=\frac{y_0}{y_1},\quad y_2=\frac{y_2}{y_1},\quad a=\frac{a}{y_1},\quad
x_1=\frac{x_1}{x_0},\quad X_2=\frac{X_2}{x_0},\quad x_3=\frac{x_3}{x_0}.
\]
Let $G_U$ be the dehomogenization of $G$ on $U$. Then $\cal X\cap U$ is cut out in
$\Spec k[y_0,y_2,a,c,d,x_1,X_2,x_3,s]$ by the two equations
\[
f:=cd+s=0,\qquad h:=G_U+x_1^2c+y_0^2d=0.
\]
We first check that $\cal X^\circ$ is regular. The Jacobian matrix of $(h,f)$ has the $2\times 2$ minor in the
columns $(c,s)$ equal to
\[
\det\begin{pmatrix}
\frac{\partial h}{\partial c} & \frac{\partial h}{\partial s}\\[2pt]
\frac{\partial f}{\partial c} & \frac{\partial f}{\partial s}
\end{pmatrix}
=
\det\begin{pmatrix}
x_1^2 & 0\\
d & 1
\end{pmatrix}
=x_1^2.
\]
Since $x_1\neq 0$ on $\cal X^\circ$ (because $x_1=0$ lies in $W_{\cal X}$), this determinant is invertible at every
point of $\cal X^\circ\cap U$. Hence the Jacobian has rank $2$ everywhere on $\cal X^\circ$, so $\cal X^\circ$ is regular.  Both $Y_0^\circ$ and $Y_1^\circ$ are Cartier divisors on the regular scheme $\cal X^\circ$. We check that $Y_0^\circ$ and $Y_1^\circ$ are smooth. On $Y_0^\circ$ we have $d=0$ and the remaining defining
equation is $h|_{d=0}=G_U+x_1^2c=0$, with
\[
\frac{\partial}{\partial c}(h|_{d=0})=x_1^2\neq 0
\]
on $\cal X^\circ$. Thus $Y_0^\circ$ is smooth. Similarly, $Y_1^\circ$ is smooth. Finally, $V$ is defined on the chart $U$ by the equation $G_U=0$ inside $\Spec k[y_0,y_2,a,x_1,X_2,x_3]$, and
\[
\frac{\partial G_U}{\partial x_1}=2(y_0y_2)x_1.
\]
By construction $\cal X^\circ\cap V$ avoids $\{y_0y_2x_1=0\}$, so $\frac{\partial G_U}{\partial x_1}\neq 0$ everywhere on $V^\circ$. Thus $V^\circ$ is smooth. Therefore $Y^\circ=Y_0^\circ\cup Y_1^\circ$ is a strict normal crossings divisor on the regular scheme $\cal X^\circ$,
and $\cal X^\circ\to \Spec(R)$ is strictly semistable.

\end{proof}

\begin{lemma}\label{lemma_chow_P4_P3}
    $\CH_1(Y_i^\circ\times_k L,\Lambda)=0$ for $i\in \{0,1\}$ and every field extension $L/k$.
\end{lemma}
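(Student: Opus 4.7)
The plan is to apply Lemma \ref{Lemma_simple_CH_1} directly to each component $Y_i$ of the special fiber. By construction of $H$, the defining equation of $Y_0$ (resp. $Y_1$), obtained by setting $d=0$ (resp. $c=0$) in $H$, is linear in the remaining auxiliary variable $c$ (resp. $d$), with monomial coefficient $y_1^2 x_1^2$ (resp. $y_0^2 x_0^2$) whose zero locus lies inside $W_Y$. This is exactly the setup of Lemma \ref{Lemma_simple_CH_1}, once we swap the two projective factors so that the distinguished linear variable matches the role of the last variable of the second factor in that lemma.

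For $Y_0 = \{G + y_1^2 x_1^2 c = 0\}$, viewed as a bidegree $(2,3)$ hypersurface in $\bb P^3_k \times \bb P^4_k$, take $Q = G$ and $P = y_1^2 x_1^2$. The locus $\{P = 0\} = \{y_1 = 0\} \cup \{x_1 = 0\}$ contains both a divisor of type $(1,0)$ and one of type $(0,1)$, so the hypothesis of Lemma \ref{Lemma_simple_CH_1} is met and the proof of that lemma identifies $Y_0 \setminus \{P = 0\}$ with an open subscheme of $\bb A^6_k$. Since $\{y_1 x_1 = 0\} \subset W_Y$, the open $Y_0^\circ$ is contained in $Y_0 \setminus \{P = 0\}$, hence is itself an open of $\bb A^6_k$. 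Base changing to $L$ preserves this description, so $Y_0^\circ \times_k L$ is an open subscheme of $\bb A^6_L$; its $\CH_1$ vanishes by the localisation sequence together with $\CH_i(\bb A^6_L) = 0$ for $i < 6$. Tensoring with $\Lambda$ gives $\CH_1(Y_0^\circ \times_k L, \Lambda) = 0$.

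The argument for $Y_1$ is entirely symmetric: isolate $d$ instead of $c$, take $P = y_0^2 x_0^2$, and use $\{y_0 x_0 = 0\} \subset W_Y$. I do not anticipate any real obstacle; the only point requiring care is matching the convention of Lemma \ref{Lemma_simple_CH_1} (distinguished variable on the second factor) with the present setup, which is immediate after swapping factors given the explicit form of $H$ and $W_{\cal X}$. The whole proof is really a bookkeeping exercise that shows why the somewhat elaborate form of $H$ was chosen in the first place.
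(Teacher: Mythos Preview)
Your proof is correct and follows exactly the paper's approach: both invoke Lemma~\ref{Lemma_simple_CH_1} using that the coefficient of the linear variable ($c$ for $Y_0$, $d$ for $Y_1$) has vanishing locus contained in $W_Y$. Your write-up is in fact more precise than the paper's one-line proof, which only mentions $\{x_0=0\}\cup\{y_0=0\}\subset W_{\cal X}$ and leaves the symmetric case for $Y_0$ (where $P=y_1^2x_1^2$) implicit.
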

\begin{proof}
   The claim follows from Lemma \ref{Lemma_simple_CH_1} as $\{x_0=0\}\cup \{y_0=0\}\subset W_X$.    
\end{proof}

\begin{proof}[Proof of Theorem \ref{theorem_3_2_in_4_3}]
The proof is identical to the proof of Theorem \ref{theorem_raise_deg_dim}.
     
\end{proof}


\section{Applications}
The aim of this section is to prove the following applications of the previous sections.
\Maintheorem*

\begin{proof}[Proof of Theorem \ref{Theorem_bidegree_bounds}]
We argue by cases.
\begin{caselist}
  \item By Proposition \ref{proposition_rel_tor_ord_HPT} we can apply Theorem \ref{theorem_raise_deg_dim} iteratively starting from the  $(2,2)$ hypersurface in $\bb P^3_k\times \bb P^2_k$ of Example \ref{example_HPT}, i.e. we may start with $n=4,r=1,d=2$ and raise the dimension and degree in the first component. 
  It then follows that for every $n-1\geq 3$ and 
    \begin{equation*}
        d\geq (n-1)-1=n-2
    \end{equation*}
    a very general bidegree $(d,2)$ hypersurface in $\bb P^{n-1}_k\times \bb P^2_k$ does not admit a decomposition of the diagonal.  
  \item For $n=5$ the claim follows from Proposition \ref{proposition_rel_tor_ord_HPT} applied to Example \ref{example_HPT}. For $n\geq 6$ we can apply Theorem \ref{theorem_raise_deg_dim} iteratively starting from the  $(3,2)$ hypersurface in $\bb P^4_k\times \bb P^3_k$ constructed in Theorem \ref{theorem_3_2_in_4_3}. It then follows that for 
    \begin{equation*}
        d\geq (n-2)-1=n-3
    \end{equation*}
    a very general bidegree $(d,2)$ hypersurface in $\bb P^{n-2}_k\times \bb P^3_k$ does not admit a decomposition of the diagonal.  
  \item 
    For $n\geq 5$ we can apply Theorem \ref{theorem_raise_deg_dim} iteratively starting from the $(2,3)$ hypersurface in $\bb P^3_k\times \bb P^3_k$ obtained by applying Theorem \ref{theorem_raise_deg_dim} to the bidegree $(2,2)$ hypersurface in $\bb P^3_k\times \bb P^2_k$ of Example \ref{example_HPT}. It then follows that for 
    \begin{equation*}
        d\geq (n-2)-1=n-3
    \end{equation*}
    a very general bidegree $(d,3)$ hypersurface in $\bb P^{n-2}_k\times \bb P^3_k$ does not admit a decomposition of the diagonal.  
  \qedhere
\end{caselist}
\end{proof}

\begin{proof}[Proof of Corollary \ref{Cor:reduction_bideg_to_cubic}]
    Let $t$ be a transcendental element over $k$ and denote $K=k(t)$. Let $c,M,L\in k[x_0,...,x_{r}]$ be very general hypersurfaces of the following degrees
    \begin{equation*}
        \deg(c)=3,\,\,\deg(M)=\deg(L)=2.
    \end{equation*}
    It follows from the proof of \cite[Theorem 1.4]{FiammengoLuedersCubics} (with $f=x_0^2$) and \cite[Corollary 6.11]{FiammengoLuedersCubics} that the quartic hypersurface
     \begin{equation*}
   Q:= \{x_{r+1}c+x_{r+1}^2M-tx_0^2L=0\}\subset \bb P^{r+1}_{\bar K}.
\end{equation*}
satisfies $\Tor (Q, \{x_0x_1x_{r+1}=0\})>1$. Consider the bidegree $(2,3)$ hypersurface
\[
  X:=\{y_0y_1c+y_1^2x_0M-ty_0^2x_0L=0\}\subset \Proj{k[y_0,y_1]}\times \Proj {k[x_0,..,x_{r}]}
     \]
     As $X$ and $Q$ agree on $D_+(x_0)\times D_+(y_0)$, respectively $D_+(x_0)$, they are birational and it follows from the above that $\Tor (X, \{y_0y_1x_0x_1=0\})>1$. We conclude by Lemma \ref{Lemma:very_general_decomposition} and an iterative application of Theorem \ref{theorem_raise_deg_dim}.
\end{proof}
\begin{remark}
    By \cite{EFS2025} a very general cubic threefold does not admit a decomposition of the diagonal, this together with Corollary \ref{Cor:reduction_bideg_to_cubic} give an alternative proof of Proposition \ref{prop:proposition_(2,3)_in_P^1timesP^4}.
\end{remark}
\begin{proof}[Proof of Corollary \ref{cor_2_4_in_1-2_6-5}]
    Consider the hypersurface 
    \begin{align*}
S_{2,4}:=\Bigl\{&
S_1^2W_0W_5^2\,(W_6+W_4)
+S_0S_1\,W_0W_6W_1^2
+S_0S_1\,W_4^2W_2^2
-tS_0^2\,W_0W_4W_2^2\\
&+\bigl((S_0-S_1)^2W_0^2+S_1^2W_6^2-2S_1(S_0+S_1)W_0W_6\bigr)W_3^2\\
&+S_1^2W_0W_1^2W_5
-sS_0^2W_0^3W_5=0\Bigr\}\subset\; \bb P^1_{[S_0:S_1]}\times \bb P^6_{[W_0:\dots:W_6]}.
\end{align*}
 We see that the dense open subset $D_+(S_1)\times D_+(W_0)\cap S_{2,4}$ is isomorphic to the open $D_+(y_1)\times D_+(x_0)\cap T$ for $T$ as in Lemma  \ref{lemma_substitute_coordinate_change}. More precisely under the isomorphism $\phi$ given by \[
\frac{y_0}{y_1}\mapsto \frac{S_0}{S_1},\quad
\frac{y_2}{y_1}\mapsto \frac{W_6}{W_0},\quad
\frac{a}{y_1}\mapsto \frac{W_4}{W_0},\quad
\frac{c}{y_1}\mapsto \frac{W_5}{W_0},\quad
\frac{x_1}{x_0}\mapsto \frac{W_1}{W_0},\quad
\frac{X_2}{x_0}\mapsto \frac{W_2}{W_0},\quad
\frac{x_3}{x_0}\mapsto \frac{W_3}{W_0}.
\]
we have \begin{align*}
&\left(\frac{c}{y_1}\right)^2\left(\frac{y_2}{y_1}+\frac{a}{y_1}\right)
+\frac{y_0}{y_1}\frac{y_2}{y_1}\left(\frac{x_1}{x_0}\right)^2
+\frac{a}{y_1}\frac{y_0}{y_1}\left(\frac{a}{y_1}-t\frac{y_0}{y_1}\right)\left(\frac{X_2}{x_0}\right)^2 \\
&\hspace{2.2cm}
+F\!\left(\frac{y_0}{y_1},1,\frac{y_2}{y_1}\right)\left(\frac{x_3}{x_0}\right)^2
+\left(\frac{x_1}{x_0}\right)^2\frac{c}{y_1}
-s\left(\frac{y_0}{y_1}\right)^2\frac{c}{y_1} \\
&\longmapsto\left(\frac{W_5}{W_0}\right)^2\left(\frac{W_6}{W_0}+\frac{W_4}{W_0}\right)
+\frac{S_0}{S_1}\frac{W_6}{W_0}\left(\frac{W_1}{W_0}\right)^2
+\frac{W_4}{W_0}\frac{S_0}{S_1}\left(\frac{W_4}{W_0}-t\frac{S_0}{S_1}\right)\left(\frac{W_2}{W_0}\right)^2 \\
&\hspace{2.2cm}
+F\!\left(\frac{S_0}{S_1},1,\frac{W_6}{W_0}\right)\left(\frac{W_3}{W_0}\right)^2
+\left(\frac{W_1}{W_0}\right)^2\frac{W_5}{W_0}
-s\left(\frac{S_0}{S_1}\right)^2\frac{W_5}{W_0}.
\end{align*}

 It follows from the proof of Theorem \ref{theorem_3_2_in_4_3} that for $\Lambda:=\bb Z/2$ we have $\Tor^\Lambda(S_{2,4},\{S_0W_0S_1W_1=0\})=2$. By setting $W=\{S_0W_0S_1W_1=0\}$ in Lemma \ref{Lemma:very_general_decomposition} we can conclude the proof.
\end{proof}

\bibliographystyle{acm}
\bibliography{Bibliografie}

\noindent
\parbox{0.5\linewidth}{
\noindent
Elia Fiammengo \\ 
Universität Heidelberg\\
Mathematisches Institut \\
Im Neuenheimer Feld 205 \\
69120 Heidelberg \\
Germany\\
{\tt 	elia.fiammengo@stud.uni-heidelberg.de}
}
\\
\newline

\noindent
\parbox{0.5\linewidth}{
\noindent
Morten L\"uders \\ 
Universität Heidelberg\\
Mathematisches Institut \\
Im Neuenheimer Feld 205 \\
69120 Heidelberg \\
Germany\\
{\tt mlueders@mathi.uni-heidelberg.de}
}
\end{document}